\newcommand{\bbZ}{{\Bbb Z}}
\newcommand{\bbR}{{\Bbb R}}
\newcommand{\bbN}{{\Bbb N}}
\newcommand{\bbC}{{\Bbb C}}
\newcommand{\bbE}{{\Bbb E}}
\newcommand{\vecoper}{\textnormal{vec}}
\newcommand{\norm}[1]{\left\|#1\right\|}
\renewcommand{\cite}{\citeyear}
\begin{document}

\title{Wavelet eigenvalue regression for $n$-variate operator fractional Brownian motion
\thanks{The first author was partially supported by grant ANR-16-CE33-0020 MultiFracs. The second author was partially supported by the prime award no.\ W911NF-14-1-0475 from the Biomathematics subdivision of the Army Research Office, USA. The second author's long term visits to ENS de Lyon were supported by the school.
The authors would like to thank Mark M.\ Meerschaert for his comments on this work. The second author would also like to thank Tewodros Amdeberhan for the enlightening mathematical discussions.}
\thanks{{\em AMS Subject classification}. Primary: 62M10, 60G18, 42C40.}
\thanks{{\em Keywords and phrases}: operator fractional Brownian motion, operator self-similarity, wavelets, eigenvalues.}}

\author{Patrice Abry \\ Univ Lyon, Ens de Lyon, Univ Claude Bernard, \\
CNRS, Laboratoire de Physique, F-69342 Lyon, France
\and   Gustavo Didier \\ Mathematics Department\\ Tulane University}

\bibliographystyle{agsm}

\maketitle

\begin{abstract}
In this contribution, we extend the methodology proposed in Abry and Didier \cite{abry:didier:2017} to obtain the first joint estimator of the real parts of the Hurst eigenvalues of $n$-variate OFBM. The procedure consists of a wavelet regression on the log-eigenvalues of the sample wavelet spectrum. The estimator is shown to be consistent for any time reversible OFBM and, under stronger assumptions, also asymptotically normal starting from either continuous or discrete time measurements. Simulation studies establish the finite sample effectiveness of the methodology and illustrate its benefits compared to univariate-like (entrywise) analysis. As an application, we revisit the well-known self-similar character of Internet traffic by applying the proposed methodology to 4-variate time series of modern, high quality Internet traffic data. The analysis reveals the presence of a rich multivariate self-similarity structure.
\end{abstract}

\section{Introduction}
An operator fractional Brownian motion (OFBM) $B_H = \{B_H(t)\}_{t \in \bbR}$ is a $\bbR^n$-valued Gaussian stochastic process with stationary increments that satisfies the operator self-similarity relation
\begin{equation}\label{e:o.s.s.}
\{B_H(ct)\}_{t \in \bbR} \stackrel{{\mathcal L}}= \{c^{H}B_H(t)\}_{t \in \bbR}, \quad c > 0,
\end{equation}
where $\stackrel{{\mathcal L}}=$ stands for the equality of finite-dimensional distributions. In relation \eqref{e:o.s.s.}, which generalizes the univariate concept of self-similarity, $H$ is a $n \times n$ matrix called the Hurst matrix, and $c^H  := \exp(\log c H )$, where $\exp A := \sum^{\infty}_{k=0}\frac{A^k}{k!}$ is the usual matrix exponential. If the Jordan form
\begin{equation}\label{e:H=PJP^(-1)}
H = P J_H P^{-1}
\end{equation}
is diagonalizable with real (Hurst) eigenvalues for a nonsingular $P$, then the eigenvectors form a coordinate system in which the $q$-th marginal $\{B_H(t)_q\}_{t \in \bbR}$ of $B_H$, $q = 1,\hdots,n$, is a fractional Brownian motion (FBM) with Hurst scaling index $h_q$ (namely, a Gaussian, self-similar, stationary increment stochastic process -- see Embrechts and Maejima \cite{embrechts:maejima:2002}, Taqqu \cite{taqqu:2003}). These coordinate processes need not be independent. It is generally assumed that OFBM is stochastically continuous, i.e., $B_H(t)\stackrel{P}\rightarrow B_H(t_0)$ whenever $t\stackrel{P}\rightarrow t_0$, and proper, namely, its variance matrix $\bbE B_H(t)B_H(t)^*$ has full rank for $t \neq 0$.

OFBM is a multivariate fractional process. Univariate fractional processes have been used with great success in the modeling of data sets from many fields of science, technology and engineering (e.g., Mandelbrot \cite{Mandelbrot1974}, Taqqu et al.\ \cite{taqqu97}, Ivanov et al.\ \cite{ivanov1999}, Ciuciu et al.\ \cite{ciuciu:abry:he:2014}, Foufoula-Georgiou and Kumar \cite{Foufoula94}). The literature on the probability theory and statistical methodology for univariate fractional processes is now voluminous (e.g., Mandelbrot and Van Ness \cite{mandelbrot:vanness:1968}, Taqqu \cite{taqqu:1975,taqqu:1979}, Dobrushin and Major \cite{dobrushin:major:1979}, Granger and Joyeux \cite{granger:joyeux:1980}, Hosking \cite{hosking:1981}, Fox and Taqqu \cite{fox:taqqu:1986}, Dahlhaus \cite{dahlhaus:1989}, Beran \cite{beran:1994}, Robinson \cite{robinson:1995-gaussian,robinson:1995-logperiodogram_regression}, Abry et al.\ \cite{aftv00}, Stoev et al.\ \cite{stoev:pipiras:taqqu:2002}, Moulines et al.\ \cite{moulines:roueff:taqqu:2007:Fractals,moulines:roueff:taqqu:2007:JTSA,moulines:roueff:taqqu:2008}, Beran et al.\ \cite{beran:feng:ghosh:kulik:2013}, Bardet and Tudor \cite{bardet:tudor:2014}, Clausel et al.\ \cite{clausel:roueff:taqqu:tudor:2014:waveletestimation}, Pipiras and Taqqu \cite{pipiras:taqqu:2017}, to cite a few).

In modern applications, however, data sets are often multivariate, since several natural and artificial systems are monitored by a large
number of sensors. Accordingly, the literature on multivariate fractional processes has been expanding at a fast pace. The contributions include Hosoya \cite{hosoya:1996,hosoya:1997}, Lobato \cite{lobato:1997}, Marinucci and Robinson \cite{marinucci:robinson:2000}, Becker-Kern and Pap \cite{becker-kern:pap:2008}, Robinson \cite{robinson:2008}, Hualde and Robinson \cite{hualde:robinson:2011}, Sela and Hurvich \cite{sela:hurvich:2012}, Kristoufek \cite{kristoufek2013mixed,kristoufek2015can} and Kechagias and Pipiras \cite{kechagias:pipiras:2015,kechagias:pipiras:2015:ident}, in the time and Fourier domains, and Wendt et al.\ \cite{WENDT:2009:C}, Amblard et al.\ \cite{amblard:coeurjolly:lavancier:philippe:2012}, Coeurjolly et al.\ \cite{coeurjolly:amblard:achard:2013}, Achard and Gannaz \cite{achard:gannaz:2016}, Frecon et al.\ \cite{frecon:didier:pustelnik:abry:2016}, in the wavelet domain (see also Marinucci and Robinson \cite{marinucci:robinson:2001}, Robinson and Yajima \cite{robinson:yajima:2002}, Hualde and Robinson \cite{hualde:robinson:2010}, Nielsen and Frederiksen \cite{nielsen:frederiksen:2011}, Shimotsu \cite{shimotsu:2012} on the related fractional cointegration literature in econometrics).

The framework of operator self-similar (o.s.s.) random processes and fields was originally conceived by Laha and Rohatgi \cite{laha:rohatgi:1981}, Hudson and Mason \cite{hudson:mason:1982}, and has attracted much attention recently (e.g., Maejima and Mason \cite{maejima:mason:1994}, Mason and Xiao \cite{mason:xiao:2002}, Bierm\'{e} et al.\ \cite{bierme:meerschaert:scheffler:2007}, Xiao \cite{xiao:2009}, Guo et al.\ \cite{guo:lim:meerschaert:2009}, Didier and Pipiras \cite{didier:pipiras:2011,didier:pipiras:2012}, Clausel and Vedel \cite{clausel:vedel:2011,clausel:vedel:2013}, Li and Xiao \cite{li:xiao:2011}, Dogan et al.\ \cite{dogan:vandam:liu:meerschaert:butler:bohling:benson:hyndman:2014}, Puplinskait{\.e} and Surgailis \cite{puplinskaite:surgailis:2015}, Didier et al.\ \cite{didier:meerschaert:pipiras:2017symmetries,didier:meerschaert:pipiras:2017exponents}). If $H = \textnormal{diag}(h_1,\hdots,h_n)$ and $P = I$ in \eqref{e:o.s.s.}, then the latter relation breaks down into simultaneous entrywise expressions
\begin{equation}\label{e:o.s.s._entrywise}
\{ B_H(ct) \}_{t \in \bbR} \stackrel{\mathcal{L}}=  \{ (c^{h_1}B_H(t)_1, \hdots,c^{h_n}B_H(t)_n)^* \}_{t \in \bbR}, \quad c > 0.
\end{equation}
Relation \eqref{e:o.s.s._entrywise} is henceforth called \textit{entrywise scaling}. Several estimators have been developed by building upon the univariate-like, entrywise scaling laws, e.g., the Fourier-based multivariate local Whittle (e.g., Shimotsu \cite{shimotsu:2007}, Nielsen \cite{nielsen:2011}) and the multivariate wavelet regression (Wendt et al.\ \cite{WENDT:2009:C}, Amblard and Coeurjolly \cite{amblard:coeurjolly:2011}). However, if $H$ is non-diagonal, then the matrix $P$ mixes together the several entries of $B_H$. In this case, the univariate-like statistical analysis of each entry of $Y$ will often generate estimates that are undetermined convex combinations of Hurst eigenvalues or, at large scales, estimates of the \textit{largest} Hurst eigenvalue (see, for instance, Chan and Tsai \cite{chan:tsai:2010}, Didier et al.\ \cite{didier:helgason:abry:2015}, Tsai et al.\ \cite{tsai:rachinger:chan:2017}, Abry et al.\ \cite{abry:didier:li:2017}).

In Abry and Didier \cite{abry:didier:2017}, the use of the eigenstructure of wavelet variance matrices is proposed for the estimation of the Hurst parameters of OFBM. The main results are obtained in the bivariate context, in which it is shown that wavelet log-eigenvalues -- and also wavelet eigenvectors, under assumptions -- are consistent and asymptotically normal estimators of the eigenstructure of the Hurst matrix $H$.

In this paper, we extend this approach by proposing a wavelet eigenvalue regression estimator of the Hurst eigenvalues of $n$-variate OFBM. The estimator is shown to be consistent for the real parts of the eigenvalues of $H$ for (essentially) any time reversible OFBM. Under the stronger assumption that Hurst eigenvalues are real and simple (pairwise distinct), we further show that the wavelet eigenvalue regression estimator is asymptotically normal. Establishing the latter properties involves showing that the wavelet log-eigenvalues themselves are a consistent and asymptotically normal estimator of (the real parts of) the eigenvalues of the Hurst matrix. Under the additional assumption that the matrix of Hurst eigenvectors $P$ (mixing matrix) in \eqref{e:H=PJP^(-1)} is orthogonal, a consistent sequence of wavelet eigenvectors is also shown to exist. With a view toward hypothesis testing, we also investigate conditions for asymptotic normality when all Hurst eigenvalues are equal. The mathematical framework is much more general than that in Abry and Didier \cite{abry:didier:2017}, which builds upon closed form expressions for eigenvalues and eigenvectors in dimension 2.


In the context of scaling properties, the use of eigenanalysis was first proposed in Meerschaert and Scheffler \cite{meerschaert:scheffler:1999,meerschaert:scheffler:2003} for operator stable laws, and later in Becker-Kern and Pap \cite{becker-kern:pap:2008} for o.s.s.\ processes in the time domain. It has also been used in the cointegration literature (e.g., Phillips and Ouliaris \cite{phillips:ouliaris:1988}, Harris and Poskitt \cite{harris:poskitt:2004}, Li et al.\ \cite{li:pan:yao:2009}, Zhang et al.\ \cite{zhang:robinson:yao:2016}). The wavelet framework has the benefit of computational efficiency (Daubechies \cite{daubechies:1992}, Mallat \cite{mallat:1999}), which is especially important in a multivariate setting (see Abry et al.\ \cite{abry:didier:li:2017}, Section 5.2, for a computational comparison between maximum likelihood and a wavelet-based estimation methodology). In addition, for a large enough number of vanishing moments $N_{\psi}$, wavelet coefficients $\{D(2^j,k) \}_{k \in \bbZ} \in \bbR^n$ are stationary in the shift parameter $k$ at every octave $j$, and the sample wavelet variance matrix is asymptotically normal at a fixed octave $j$. These properties are in part a consequence of the quasi-decorrelation property of the wavelet transform (Flandrin \cite{flandrin:1992}, Wornell and Oppenheim \cite{wornell:oppenheim:1992}, Masry \cite{masry:1993}, Bardet and Tudor \cite{bardet:tudor:2010}, Clausel et al.\ \cite{clausel:roueff:taqqu:tudor:2014:quadraticvariation}). To the best of our knowledge, we are proposing the first eigenanalysis-based asymptotically normal estimator of Hurst eigenvalues in general dimension $n$, under assumptions. The most general case of multiple blocks of Hurst eigenvalues with algebraic multiplicity greater than 1 (see Section \ref{s:preliminaries} on terminology) calls for special efforts and remains a topic for future research, since asymptotic distributions may be normal or nonnormal (see Remark \ref{r:difficulties} on the difficulties involved).

We conducted broad Monte Carlo experiments which illustrate the appropriate use of the estimator and demonstrate its finite sample size effectiveness. In addition, we apply the proposed methodology in the modeling of 4-variate Internet traffic time series from the so-named MAWI archive. The latter comprises Internet traffic traces captured on a high-speed, high-capacity backbone that mostly connects academic institutions in Japan and the USA. Our study reveals, for the first time, the presence of multivariate scaling properties in Internet traffic.

This paper is organized as follows. Section \ref{s:preliminaries} contains the notation, theoretical background, assumptions and definitions. The main mathematical results can be found in Section \ref{s:asymptotic_theory}, namely, the consistency and asymptotic normality of wavelet log-eigenvalues for Hurst eigenvalues, as well as the corresponding results for the wavelet eigenvalue regression estimator. In Section \ref{s:asymptotic_theory_discrete}, we extend the results from Section \ref{s:asymptotic_theory} to the more realistic context where measurements are made in discrete time. Section \ref{s:mc} contains Monte Carlo studies. Section \ref{s:application} contains the wavelet eigenvalue analysis of Internet traffic data. All proofs can be found in the Appendix, together with auxiliary results.

\section{Preliminaries}\label{s:preliminaries}

\subsection{Notation and background}

Hereinafter, ${\mathcal H}(n,\bbR)$, ${\mathcal H}_{\geq 0}(n,\bbR)$, ${\mathcal H}_{>0}(n,\bbR)$, ${\mathcal H}_{\geq 0}(n,\bbC)$, ${\mathcal H}_{>0}(n,\bbC)$ and $M(n,\bbR)$ denote, respectively, the space of symmetric matrices and the cones of symmetric positive semidefinite, symmetric positive definite, Hermitian positive semidefinite and Hermitian positive definite matrices, and the space of $n \times n$ matrices. The real and complex spheres are represented by $S^{n-1}$ and $S^{n-1}_{\bbC}$, respectively. For $h_\cdot \in \bbC$, $J_{h_\cdot} \in M(n_{h_\cdot} ,\bbC)$ denotes a Jordan block of size $n_{h_\cdot}$ (see \eqref{e:Jordan_block} for an explicit expression). For a matrix $M \in M(n,\bbR)$, recall that the multiplicity of an eigenvalue $\lambda$ is its multiplicity as a zero of the characteristic polynomial of $M$. An eigenvalue $\lambda$ is called simple when its algebraic multiplicity is 1 (Horn and Johnson \cite{horn:johnson:2012}, p.\ 76). The operator $\textnormal{vec}_{{\mathcal S}}(\cdot)$ vectorizes the upper triangular entries of a symmetric matrix $S$.

Let $B_H = \{B_H(t) \}_{t \in \bbR}$ be an OFBM with Hurst matrix $H$. Following the results in Didier and Pipiras \cite{didier:pipiras:2011}, if the eigenvalues of $H$ satisfy
\begin{equation}\label{e:eigen-assumption}
0< \Re(h_q)<1,\quad q=1,\hdots,n,
\end{equation}
then the OFBM admits the harmonizable representation
\begin{equation}\label{e:OFBM_harmonizable}
\{B_H(t) \}_{t \in \bbR} \stackrel{{\mathcal L}}= \Big\{ \int_{\bbR} \Big( \frac{e^{{\mathbf i}tx} - 1}{{\mathbf i} x}\Big)
\{ x^{-D}_{+}A + x^{-D}_{-}\overline{A} \} \widetilde{B}(dx) \Big\}_{t \in \bbR},
\end{equation}
where $\stackrel{{\mathcal L}}=$ denotes the equality of finite dimensional distributions, $D := H - (1/2)I$, $A \in M(n,\bbC)$ and $\widetilde{B}(dx)$ is a $\bbC$-valued, Gaussian random measure satisfying the constraints $\widetilde{B}(-dx) = \overline{\widetilde{B}(dx)}$, $\bbE \widetilde{B}(dx)\widetilde{B}(dx)^* = dx$. Conversely, if
\begin{equation}\label{e:full-rank}
\textnormal{$\Re(AA^*)$ has full rank},
\end{equation}
then the process defined by the expression on the right-hand side of \eqref{e:OFBM_harmonizable} is proper; hence, it defines an OFBM $B_H$. If
\begin{equation}\label{e:time_revers}
\Im(AA^*) = 0,
\end{equation}
then the OFBM $B_H$ is time reversible, i.e., $\{B_H(t) \}_{t \in \bbR} \stackrel{{\mathcal L}}= \{B_H(-t) \}_{t \in \bbR}$.

\subsection{Assumptions and definitions}

Throughout the paper, we assume that the underlying stochastic process is a $\bbR^n$-valued OFBM $B_H = \{B_{H}(t)\}_{t \in \bbR}$ under the following conditions.

\medskip

\noindent {\sc Assumption (OFBM1)}: condition \eqref{e:full-rank} holds.

\medskip

\noindent {\sc Assumption (OFBM2)}: condition \eqref{e:time_revers} holds.

\medskip

\noindent In Sections \ref{s:asymptotic_theory} and \ref{s:asymptotic_theory_discrete}, we will make use of assumptions (OFBM 1--2) combined with one of the following two assumptions.

The first one, called (OFBM3), is the more general and will be applied in consistency statements. In fact, it simply recasts \eqref{e:eigen-assumption} based on Jordan blocks.

\medskip

\noindent {\sc Assumption (OFBM3)}:
$$
H = P J_H P^{-1}, \quad J_H = \textnormal{diag}(J_{h_{1}},J_{h_{2}},\hdots,J_{h_{n'}}), \quad 1 \leq n' \leq n,
$$
where each $J_{h_{\cdot}}$ is a Jordan block of length $n_{\cdot}$,
$$
\quad 0 < \Re h_1 \leq \Re h_2 \leq \hdots \leq \Re h_{n'} < 1,
$$
\begin{equation}\label{e:H_Jordan}
\quad P \in GL(n,\bbC), \quad \|p_{\cdot,q}\| = 1, \quad q = 1,\hdots,n,
\end{equation}
and $p_{\cdot,q}$ denotes a column vector of $P$.

\medskip

The second one, named (OFBM3$'$), is more stringent and will be used in (most) asymptotic normality statements (n.b.: the latter should not to be confused with Theorem \ref{t:asymptotic_normality_wavecoef_fixed_scales}, which holds under great generality for fixed scales).

\medskip

\noindent {\sc Assumption (OFBM3$'$)}:
\begin{equation}\label{e:H_h1<...<hn}
H = PJ_HP^{-1}, \quad J_H = \textnormal{diag}(h_1,\hdots,h_n), \quad 0 < h_1 < \hdots < h_n < 1, \quad P \in GL(n,\bbR).
\end{equation}

\medskip

In particular, condition \eqref{e:H_h1<...<hn} implies that every eigenvalue of the Hurst matrix $H$ is real and simple.

Throughout the paper, we will make the following assumptions on the underlying wavelet basis. For this reason, such assumptions will be omitted in statements.

\medskip

\noindent {\sc Assumption $(W1)$}: $\psi \in L^1(\bbR)$ is a wavelet function, namely,
\begin{equation}\label{e:N_psi}
\int_{\bbR} \psi^2(t)dt = 1 , \quad \int_{\bbR} t^{p}\psi(t)dt = 0, \quad p = 0,1,\hdots, N_{\psi}-1, \quad N_{\psi} \geq 2.
\end{equation}
\noindent {\sc Assumption $(W2)$}:
\begin{equation}\label{e:supp_psi=compact}
\textnormal{$\textnormal{supp}(\psi)$ is a compact interval}.
\end{equation}
\noindent {\sc Assumption $(W3)$}: there is $\alpha > 1$ such that
\begin{equation}\label{e:psihat_is_slower_than_a_power_function}
\sup_{x \in \bbR} |\widehat{\psi}(x)| (1 + |x|)^{\alpha} < \infty.
\end{equation}

\medskip

Under \eqref{e:N_psi}, \eqref{e:supp_psi=compact} and \eqref{e:psihat_is_slower_than_a_power_function}, $\psi$ is continuous, $\widehat{\psi}(x)$ is everywhere differentiable and its first $N_{\psi}-1$ derivatives are zero at $x = 0$ (see Mallat \cite{mallat:1999}, Theorem 6.1 and the proof of Theorem 7.4).

\begin{example}
If $\psi$ is a Daubechies wavelet with $N_{\psi}$ vanishing moments, $\textnormal{supp}(\psi) = [0,2N_{\psi} -1]$ (see Mallat \cite{mallat:1999}, Proposition 7.4).
\end{example}

Next, we define the wavelet transform and sample wavelet variance (spectrum) of OFBM.
\begin{definition}
Let $B_H = \{B_H(t)\}_{t \in \bbR}$ be an OFBM satisfying the assumptions (OFBM 1--3). Its (normalized) wavelet transform at octave $j$ and shift $k$ is given by
\begin{equation}\label{e:D(2^j,k)}
\bbR^n \ni D(2^j,k) = 2^{-j/2} \int_{\bbR} 2^{-j/2} \psi(2^{-j}t - k)B_H(t) dt, \quad j \in \bbN \cup \{0\}, \quad k \in \bbZ.
\end{equation}
For a (wavelet) sample size $\nu$, the sample wavelet variance is defined by the random matrix
\begin{equation}\label{e:W(2j)}
W(2^j) = \frac{1}{K_j}\sum^{K_j}_{k=1}D(2^j,k)D(2^j,k)^*, \quad K_j = \frac{\nu}{2^j}.
\end{equation}
\end{definition}
The following theorem shows that $\{\textnormal{vec}_{{\mathcal S}}W(2^j)\}_{j = j_1,\hdots,j_2}$ is asymptotically normal (see Section \ref{s:preliminaries} on the definition of the operator $\textnormal{vec}_{{\mathcal S}}$).

\begin{theorem}\label{t:asymptotic_normality_wavecoef_fixed_scales}
(Abry and Didier \cite{abry:didier:2017}, Theorem 3.1) Let $B_H = \{B_{H}\}_{t \in \bbR}$ be an OFBM under the assumptions (OFBM1--3) and consider
\begin{equation}\label{e:j1<...<j2_m=j2-j1+1}
j = j_1 , j_1 + 1, \hdots , j_2, \quad m := j_2 - j_1 + 1.
\end{equation}
Let $F \in {\mathcal S}(\frac{n(n+1)}{2}m,\bbR)$ be the asymptotic covariance matrix described in Proposition 3.3 in Abry and Didier \cite{abry:didier:2017}. Then,
\begin{equation}\label{e:asymptotic_normality_wavecoef_fixed_scales}
\Big(\sqrt{K_j} \hspace{1mm}\textnormal{vec}_{{\mathcal S}} (W(2^j)- {\Bbb E}W(2^j) ) \Big)_{j = j_1 , \hdots, j_2} \stackrel{d}\rightarrow {\mathcal N}_{\frac{n(n+1)}{2} \times m}(0,F),
\end{equation}
as $\nu \rightarrow \infty$.
\end{theorem}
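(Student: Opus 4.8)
Since the statement is quoted from Abry and Didier \cite{abry:didier:2017}, I would reconstruct a proof along the following lines. The plan is to view $\sqrt{K_j}\,\textnormal{vec}_{{\mathcal S}}(W(2^j) - \bbE W(2^j))$ as a normalized partial sum of a stationary, centered sequence of rank-one random matrices whose entries lie in the second Wiener chaos generated by the Gaussian measure $\widetilde{B}$ in \eqref{e:OFBM_harmonizable}, and then to apply a central limit theorem for such quadratic Gaussian functionals. First I would note that, for each fixed octave $j$, the wavelet coefficient process $\{D(2^j,k)\}_{k \in \bbZ}$ is a stationary $\bbR^n$-valued Gaussian sequence, which follows from the stationary-increment property of $B_H$ together with the polynomial cancellation imposed by \eqref{e:N_psi}. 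Hence $\big(\textnormal{vec}_{{\mathcal S}}(D(2^j,k)D(2^j,k)^*)\big)_{j=j_1,\ldots,j_2}$ is stationary in $k$, and, since $m = j_2-j_1+1$ is fixed and $K_j = 2^{j_1-j}K_{j_1}$, the vector in \eqref{e:asymptotic_normality_wavecoef_fixed_scales} equals a fixed ($\nu$-free) linear transform of the centered, $\sqrt{K_{j_1}}$-normalized sums over $k$; it therefore suffices to prove a joint CLT for those sums.

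The crucial input is the rate of decay of the cross-covariances $\bbE[D(2^j,k)D(2^{j'},k')^*]$ in $|2^{j'}k' - 2^j k|$. Using \eqref{e:OFBM_harmonizable}, each such covariance is an oscillatory integral of a kernel built from $\widehat{\psi}(2^j x)\,\overline{\widehat{\psi}(2^{j'}x)}$ against the factor $x^{-D}_{+}A + x^{-D}_{-}\overline{A}$ and its conjugate; assumptions \eqref{e:supp_psi=compact}--\eqref{e:psihat_is_slower_than_a_power_function} together with the $N_\psi$ vanishing moments yield, in the eigencoordinates of $H$, a decay of order $|k-k'|^{2\Re h_q - 2N_\psi}$, with extra logarithmic factors when the relevant Jordan block $J_{h_q}$ has size $>1$ under (OFBM3). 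Since $\Re h_q < 1$ and $N_\psi \geq 2$, one has $2\Re h_q - 2N_\psi < -1$, so these covariances are absolutely summable over shifts. Absolute summability delivers convergence of the (finite-dimensional) covariance matrix of the normalized sum to a well-defined limit, which one identifies with the matrix $F$ of Proposition 3.3 in Abry and Didier \cite{abry:didier:2017}.

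It then remains to upgrade this second-moment convergence to a Gaussian limit. Because each coordinate of the summand is a homogeneous second-chaos element, it is enough to show that all joint cumulants of order $\geq 3$ of the normalized sum vanish as $\nu \to \infty$: by the diagram/Wick formula for Gaussian quadratic forms, the order-$r$ cumulant is a sum over $r$ shift indices of cyclic traces of products of $r$ wavelet covariance matrices, and combining the summability of the previous step with the $K_{j_1}^{-r/2}$ normalization gives bounds of the order $K_{j_1}^{1-r/2}$, which tend to $0$ for $r \geq 3$; equivalently, one may invoke the multivariate fourth-moment theorem of Nualart and Peccati, or a multivariate Breuer--Major theorem, directly. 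Paired with the covariance convergence, this yields \eqref{e:asymptotic_normality_wavecoef_fixed_scales}. The main obstacle is the covariance-decay estimate: carefully bounding the oscillatory spectral integrals to extract the sharp exponent, while tracking how the $N_\psi$ vanishing moments (forcing $\widehat{\psi}$ and its first $N_\psi-1$ derivatives to vanish at the origin) interact with the possibly complex, non-diagonalizable exponent $D = H - (1/2)I$ — in particular the $\log|x|$ singularities produced by nontrivial Jordan blocks under (OFBM3). Once this estimate is secured, the stationarity reduction and the chaos/cumulant argument are relatively standard.
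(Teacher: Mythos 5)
The paper does not actually prove this statement: it is quoted verbatim from Abry and Didier (2017), Theorem 3.1, so there is no internal proof here to compare against. Your reconstruction --- stationarity of $\{D(2^j,k)\}_k$ from the stationary increments plus vanishing moments, cross-covariance decay of order $|k-k'|^{2\Re h_q - 2N_\psi}$ (with logarithmic corrections for nontrivial Jordan blocks), absolute summability since $N_\psi \geq 2$ and $\Re h_q<1$, identification of the limit covariance with $F$, and the upgrade to joint normality via the cumulant/diagram argument or the multivariate fourth-moment (Breuer--Major type) theorem for second-chaos quadratic functionals --- is the standard route and is essentially the argument used in the cited reference.
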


When $H = P \textnormal{diag}(h_1,\hdots,h_n) P^{-1}$ with real eigenvalues and a scalar matrix $P$ --  i.e., it has the form $P = p  I $ for some constant $p \neq 0$ --, the sample wavelet variance satisfies the so-named entrywise scaling relation
$$
W(2^j) \stackrel{d}= \Big\{2^{j (h_i + h_{i'})}W(1)_{ii'}\Big\}_{i,i'= 1,\hdots,n}, \quad j \in \bbN
$$
(c.f.\ Introduction). In this case, the (Hurst) eigenvalues can be estimated by means of an entrywise log-regression procedure (Amblard and Coeurjolly \cite{amblard:coeurjolly:2011}, Coeurjolly et al.\ \cite{coeurjolly:amblard:achard:2013}). However, for a general matrix $P \in GL(n,\bbC)$, entrywise analysis is highly biased, since there is no simple relation between Hurst eigenvalues and the entrywise behavior of the wavelet variance matrix.

Likewise, wavelet eigenvalues do not satisfy a simple scaling relation based on Hurst eigenvalues. However, as it turns out, an approximate scaling relation appears in the coarse scale limit. So, rewrite the sample wavelet variance at scale $a(\nu)2^j$ as
\begin{equation}\label{e:Wa(a2^j)}
W_a(a(\nu)2^j) = \frac{1}{K_{a,j}}\sum^{K_{a,j}}_{k=1}D(a(\nu) 2^j,k)D(a(\nu) 2^j,k)^*, \quad K_{a,j} = \frac{\nu}{a(\nu) 2^j},
\end{equation}
The dyadic, slow-growth scaling factor $a(\nu)$ in \eqref{e:Wa(a2^j)} satisfies the relation
\begin{equation}\label{e:a(nu)/J->infty}
a(\nu) \leq \frac{\nu}{2^j},\quad \frac{a(\nu)}{\nu}+\frac{\nu}{a(\nu)^{1+2 \varpi_0}}\rightarrow 0, \quad \nu \rightarrow \infty,
\end{equation}
where $\varpi_0$ is the regularity parameter
$$
\varpi_0 = \min\{\hspace{0.5mm}\Re h_1,\min_{1 \leq q_1 < q_2 \leq n}(\Re h_{q_2}-\Re h_{q_1})\}1_{\{\min_{1 \leq q_1 < q_2 \leq n}(\Re h_{q_2}-\Re h_{q_1}) >0\}}
$$
$$
+ \Re h_1 1_{\{\min_{1 \leq q_1 < q_2 \leq n}(\Re h_{q_2}-\Re h_{q_1}) = 0\}}.
$$
Then, by the operator self-similarity property (see Abry and Didier \cite{abry:didier:2017}, Proposition 3.1),
\begin{equation}\label{e:EW(a(nu)2j)_in_the_case_blindsourcing}
W_a(a(\nu)2^j) \stackrel{d}= P a(\nu)^{J_H} \widehat{B}_a(2^j) a(\nu)^{J^*_H} P^*, \quad {\Bbb E}W_a(a(\nu)2^j) = P a(\nu)^{J_H} B(2^j) a(\nu)^{J^*_H} P^*,
\end{equation}
where
\begin{equation}\label{e:Bhat_a(2^j)_B(2^j)}
\widehat{B}_a(2^j) := P^{-1} W_a(2^j)(P^*)^{-1}, \quad B(2^j) := P^{-1} \bbE W(2^j)(P^*)^{-1}.
\end{equation}
In particular, if $H$ is diagonalizable, the latter matrices satisfy entrywise scaling relations
$$
\widehat{B}_a(2^j) = \Big( \widehat{b}(2^j)_{ii'} \Big)_{i,i'=1,\hdots,n} \stackrel{d}= \Big( 2^{j(h_i + h_{i'})}\widehat{b}(1)_{ii'} \Big)_{i,i'=1,\hdots,n},
$$
\begin{equation}\label{e:B(2^j)_entrywise_scaling}
B(2^j) = \Big( b(2^j)_{ii'} \Big)_{i,i'=1,\hdots,n} = \Big( 2^{j(h_i + h_{i'})}b(1)_{ii'} \Big)_{i,i'=1,\hdots,n}.
\end{equation}

We are now in a position to define the wavelet eigenstructure estimator of the real parts of the Hurst eigenvalues \eqref{e:eigen-assumption} by means of a weighted regression procedure on wavelet log-eigenvalues.
\begin{definition}\label{def:eigenvalue_estimator}
Let $B_H = \{B_H(t)\}_{t \in \bbR}$ be an OFBM satisfying the assumptions (OFBM 1--3), and let $\{W_a(a(\nu)2^j)\}_{j=j_1,\hdots,j_2}$ be its sample wavelet variance matrices corresponding to scales $\{a(\nu)2^{j_1},\hdots, a(\nu)2^{j_2}\}$. The wavelet eigenstructure estimator of the Hurst eigenvalues is given by the regression system
\begin{equation}\label{e:hl-hat}
\{ \widehat{\Re  h}_q \}_{q = 1,\hdots,n} =  \Big\{\frac{1}{2}\sum_{j=j_1}^{j_2} w_j \log_2 \lambda_{q}(W_a(a(\nu)2^j)) \Big\}_{q = 1,\hdots,n}.
\end{equation}
In \eqref{e:hl-hat}, $w_j$, $j = j_1,\hdots,j_2$, are weights satisfying the relations
\begin{equation}\label{e:sum_wj=0,sum_jwj=1}
\sum^{j_2}_{j=j_1}w_j = 0, \quad \sum^{j_2}_{j=j_1}j w_j = 1.
\end{equation}
\end{definition}
Since $W_a(a(\nu)2^j) \in {\mathcal H}_{\geq 0}(n,\bbC)$ a.s., then expression \eqref{e:hl-hat} is well-defined a.s. If, in addition, $\Re h_q = h_q$ for some $q = 1,\hdots,n$, we will simply write $\widehat{h}_{q}$ instead of $\widehat{\Re  h}_q$.

\section{Asymptotic theory: continuous time}\label{s:asymptotic_theory}

In this section, assuming measurements in continuous time, we establish the asymptotic properties of wavelet log-eigenvalues, as well as the corresponding results for the wavelet eigenvalue regression estimator described in Definition \ref{def:eigenvalue_estimator}.

In Theorem \ref{t:consistency}, ordered wavelet log-eigenvalues are shown to be consistent for their respective (real parts of) Hurst eigenvalues for any time reversible OFBM. Consistency appears as a consequence of the operator self-similarity property \eqref{e:EW(a(nu)2j)_in_the_case_blindsourcing} of wavelet variance matrices by applying the Courant-Fischer principle (see \eqref{e:Courant-Fischer}).
\begin{theorem}\label{t:consistency}
Let $B_{H} = \{B_H(t)\}_{t \in \bbR}$ be an OFBM under the assumptions (OFBM 1--2). Fix $j \in \bbN$. If, in addition, $B_{H}$ satisfies (OFBM3), then
\begin{equation}\label{e:log_lambdaE/2_log_a(n)}
\frac{\log\lambda_{q}(W_a(a(\nu)2^j))}{2 \log a(\nu)} \stackrel{P}\rightarrow \Re h_{q'}, \quad \frac{\log\lambda_{q}(\bbE W_a(a(\nu)2^j))}{2 \log a(\nu)} \rightarrow \Re h_{q'}, \quad q = 1,\hdots,n,
\end{equation}
as $\nu \rightarrow \infty$, where $q' \in \{1,\hdots,n'\}$ is such that
\begin{equation}\label{e:n1+...+nq'1<q=<n1+...+nq'}
n_1 + n_2 + \hdots + n_{q'-1} < q \leq n_1 + n_2 + \hdots + n_{q'}.
\end{equation}In particular, if $ h_1 < \hdots < h_n$, then
\begin{equation}\label{e:log_lambdaE/2_log_a(n)_distinct Re}
\frac{\log\lambda_{q}(W_a(a(\nu)2^j))}{2 \log a(\nu)} \stackrel{P}\rightarrow h_{q}, \quad \frac{\log\lambda_{q}(\bbE W_a(a(\nu)2^j))}{2 \log a(\nu)} \rightarrow h_{q}, \quad q = 1,\hdots,n.
\end{equation}
\end{theorem}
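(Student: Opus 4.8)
The plan is to reduce both statements in \eqref{e:log_lambdaE/2_log_a(n)} to an asymptotic analysis of the eigenvalues of the matrix $B(2^j)$ via the scaling identity \eqref{e:EW(a(nu)2j)_in_the_case_blindsourcing}, and then transfer from the deterministic to the random version using Theorem \ref{t:asymptotic_normality_wavecoef_fixed_scales} together with the continuity of eigenvalues under matrix perturbations. First I would treat the deterministic statement. By \eqref{e:EW(a(nu)2j)_in_the_case_blindsourcing}, $\bbE W_a(a(\nu)2^j) = P\, a(\nu)^{J_H} B(2^j)\, a(\nu)^{J^*_H} P^*$, so I would factor out the dominant growth. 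Writing $J_H$ in block-diagonal Jordan form with blocks $J_{h_{q'}}$ of size $n_{q'}$, the factor $a(\nu)^{J_{h_{q'}}}$ contributes growth of order $a(\nu)^{\Re h_{q'}}$ up to polylogarithmic (polynomial-in-$\log a(\nu)$) corrections coming from the nilpotent part of the Jordan block. The key point is that after normalizing by $2\log a(\nu)$ in the exponent, these polylog corrections vanish in the limit. I would then invoke the Courant--Fischer min-max principle \eqref{e:Courant-Fischer} to sandwich $\lambda_q(\bbE W_a(a(\nu)2^j))$ between quantities of the form $c_\pm\, a(\nu)^{2\Re h_{q'} \pm o(1)}$, where $q'$ is determined by the block containing index $q$ as in \eqref{e:n1+...+nq'1<q=<n1+...+nq'}; taking $\log$ and dividing by $2\log a(\nu)$ gives the claimed limit $\Re h_{q'}$. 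The ordering $\Re h_1 \le \cdots \le \Re h_{n'}$ from (OFBM3), together with the fact that $B(2^j)$ is positive definite (since $B_H$ is proper and the wavelet filter has $N_\psi \ge 2$ vanishing moments), ensures the eigenvalues are correctly matched to the blocks.

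For the random statement, I would write $W_a(a(\nu)2^j) = P\, a(\nu)^{J_H} \widehat B_a(2^j)\, a(\nu)^{J^*_H} P^*$ with $\widehat B_a(2^j) = \bbE W(2^j) \cdot (1 + o_P(1))$ in the appropriate sense: by Theorem \ref{t:asymptotic_normality_wavecoef_fixed_scales}, $\sqrt{K_j}\,\textnormal{vec}_{{\mathcal S}}(W(2^j) - \bbE W(2^j)) = O_P(1)$, hence $W_a(2^j) \stackrel{P}\rightarrow \bbE W(2^j)$ as $\nu \to \infty$, and $\bbE W(2^j)$ is a fixed positive definite matrix. Consequently $\widehat B_a(2^j) \stackrel{P}\rightarrow B(2^j) \in {\mathcal H}_{>0}(n,\bbC)$. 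I would then run the same Courant--Fischer sandwich argument as in the deterministic case, but now with the constants $c_\pm$ replaced by random variables that converge in probability to the corresponding eigenvalues of $B(2^j)$; since dividing by $2\log a(\nu) \to \infty$ kills any bounded-in-probability multiplicative fluctuation, the ratio converges in probability to $\Re h_{q'}$. The conclusion \eqref{e:log_lambdaE/2_log_a(n)_distinct Re} under the additional hypothesis $h_1 < \cdots < h_n$ is then immediate: each Jordan block has size $n_{q'} = 1$, so $n' = n$ and $q' = q$ for every $q$, and all Hurst eigenvalues are automatically real.

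The main obstacle I expect is the careful bookkeeping of the nilpotent (super-diagonal) part of the Jordan blocks in the conjugation $a(\nu)^{J_H} B(2^j) a(\nu)^{J^*_H}$: unlike the purely diagonalizable case \eqref{e:B(2^j)_entrywise_scaling}, here $a(\nu)^{J_{h_{q'}}}$ is an upper-triangular Toeplitz matrix whose entries are $a(\nu)^{h_{q'}}$ times polynomials in $\log a(\nu)$ of degree up to $n_{q'} - 1$, and one must verify that the largest eigenvalue contributed by the $q'$-th block still grows like $a(\nu)^{2\Re h_{q'}}$ up to a factor that is at most polynomial in $\log a(\nu)$ — in particular, that no cancellation or cross-block interference changes the leading exponent. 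A clean way to handle this is to bound the operator norm of $a(\nu)^{-J_{h_{q'}}}$ and of $a(\nu)^{J_{h_{q'}}}$ separately (each is $O(a(\nu)^{\pm\Re h_{q'}}\,\textnormal{poly}(\log a(\nu)))$) and then use the interlacing/monotonicity of eigenvalues under congruence by block-diagonal matrices, together with Weyl-type inequalities, to pin down $\lambda_q$ up to the harmless polylogarithmic factor that disappears after normalization by $2\log a(\nu)$. A secondary technical point is ensuring the matching \eqref{e:n1+...+nq'1<q=<n1+...+nq'} is consistent with the ascending order of both the $\Re h_{q'}$ and the ordered eigenvalues $\lambda_q(\cdot)$; this follows because blocks with a strictly larger value of $\Re h_{q'}$ dominate, so once $a(\nu)$ is large enough the eigenvalue ordering respects the block ordering, while within a block of equal real parts the normalized limit is the same common value regardless of how ties among $\lambda_q$ are broken.
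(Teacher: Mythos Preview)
Your proposal is correct and follows essentially the same route as the paper: reduce via the scaling identity \eqref{e:EW(a(nu)2j)_in_the_case_blindsourcing}, use Courant--Fischer to sandwich $\lambda_q$ by $a(\nu)^{2\Re h_{q'}}$ times polylogarithmic factors coming from the nilpotent part of $a(\nu)^{J_{h_{q'}}}$, and pass to the random version via $\widehat B_a(2^j)\stackrel{P}\to B(2^j)$. The paper organizes the sandwich slightly differently---it first traps $W_a(2^j)$ between $\delta_1 I$ and $\delta_2 I$ on a high-probability event (reducing the problem to the purely deterministic matrix $a^H a^{H^*}$), then traps $(P^*P)^{-1}$ between multiples of $I$, and finally uses $\det\bigl[a^{-2\Re h_\cdot}a^{J_{h_\cdot}}a^{J^*_{h_\cdot}}\bigr]=1$ for the lower bound---but this is just a cleaner packaging of the same Courant--Fischer/operator-norm estimates you describe.
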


Theorem \ref{t:asympt_normality_lambda2}, which requires the stronger assumption (OFBM3$'$), establishes the asymptotic normality of wavelet log-eigenvalues. Proving it requires establishing Proposition \ref{p:convergence} first, which contains some properties of interest of wavelet variance matrices. For sample wavelet variance matrices, these properties can be summed up as follows. First, the ratio between the $q$-th wavelet eigenvalue and the power law $a(\nu)^{2h_q}$ converges to a limiting function $\xi_q(\cdot)$ that satisfies a scaling relation. Second, for each eigenvalue $\lambda_q(W_a(a(\nu)2^j))$, $q = 1,\hdots,n$, there is a convergent sequence of associated eigenvectors $\{u_q(\nu)\}_{\nu \in \bbN} \subseteq S^{n-1}$. Therefore, we can assume that the eigenvectors $u_{q}(\nu)$ converge (in probability) in the space $\textnormal{span}\{p_{\cdot,q},\hdots,p_{\cdot,n}\}$ (see \eqref{e:H_Jordan} on the definition of the vectors $p_{\cdot,q},\hdots,p_{\cdot,n}$). In particular, $u_{n}(\nu) \stackrel{P}\rightarrow p_{\cdot,n}$.
\begin{proposition}\label{p:convergence}
Let $B_{H} = \{B_H(t)\}_{t \in \bbR}$ be an OFBM under the assumptions (OFBM 1,2,3$'$). Let $W_a(a(\nu)2^j)$ be the sample wavelet variance matrix \eqref{e:Wa(a2^j)}. Fix $q \in \{1,\hdots,n\}$ and an octave $j$. Then, as $\nu \rightarrow \infty$,
\begin{itemize}
\item [$(i)$] there is a function $\xi_{q} >0$ such that
\begin{equation}\label{e:lambdai/a^(2hi)_conv}
\frac{\lambda_{q}(W_a(a(\nu)2^j))}{a(\nu)^{2h_{q}}}\stackrel{P}\rightarrow \xi_{q}(2^j);
\end{equation}
\item [$(ii)$] the wavelet eigenvalue limiting function in \eqref{e:lambdai/a^(2hi)_conv} satisfies the scaling relation
\begin{equation}\label{e:xi-i0_scales}
\xi_{q}(2^j)  = 2^{j \hspace{0.5mm}2 h_{q}} \xi_{q}(1).
\end{equation}
In particular, $\xi_{n}(2^j)  = b_{nn}(2^{j})= 2^{j \hspace{0.5mm}2 h_{n}} b_{nn}(1)$ (see \eqref{e:B(2^j)_entrywise_scaling});
\item [$(iii)$] for some sequence $\{u_{q}(\nu)\}_{\nu \in \bbN}$ of unit eigenvectors associated with the $q$-th eigenvalue of $W_a(a(\nu)2^j)$, there is a unit vector $u_{q}$ such that
\begin{equation}\label{e:u1,u2,u3_conv}
u_{q}(\nu) \stackrel{P}\rightarrow u_{q},
\end{equation}
where
\begin{equation}\label{e:angles_uq_in_the_limit}
u_{q} \in \left\{\begin{array}{cc}
\{p_{\cdot,q + 1},\hdots,p_{\cdot,n}\}^{\perp}, & 1 \leq q \leq n - 1,\\
\textnormal{span}\{p_{\cdot,q}\}, & q = n.
\end{array}\right.
\end{equation}
In particular,
$$
u_{q} \in \textnormal{span}\{p_{\cdot,q},p_{\cdot,q + 1},\hdots,p_{\cdot,n}\}, \quad 1 \leq q \leq n;
$$
\item [$(iv)$] the sequence $\{u_{q}(\nu)\}_{\nu \in \bbN}$ satisfies
\begin{equation}\label{e:inner*scaling=o(1)}
\Big(\langle p_{\cdot,q+1},u_{q}(\nu) \rangle a^{h_{q+1} - h_{q}}, \hdots, \langle p_{\cdot,n},u_{q}(\nu) \rangle a^{h_n - h_{q}} \Big) \stackrel{P}\rightarrow {\mathbf x}_{q,*}(2^j) \in \bbR^{n-q},
\end{equation}
for some limiting vector function ${\mathbf x}_{q,*}(2^j)$ (see \eqref{e:x*_sol_determ}).
\end{itemize}
All claims above hold with the matrix $\bbE W_a(a(\nu)2^j)$ as in \eqref{e:Wa(a2^j)} replacing $W_a(a(\nu)2^j)$, and with deterministic convergence in expressions \eqref{e:lambdai/a^(2hi)_conv}, \eqref{e:u1,u2,u3_conv} and \eqref{e:inner*scaling=o(1)}.
\end{proposition}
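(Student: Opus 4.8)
The plan is to pass, via the operator self-similarity identity \eqref{e:EW(a(nu)2j)_in_the_case_blindsourcing}, to the matrix
$$
M(\nu) := P\,a(\nu)^{J_H}\,\widehat B_a(2^j)\,a(\nu)^{J_H}\,P^*, \qquad a(\nu)^{J_H} = \textnormal{diag}\big(a(\nu)^{h_1},\dots,a(\nu)^{h_n}\big),
$$
so that $W_a(a(\nu)2^j) \stackrel{d}= M(\nu)$, and to exploit the scale separation $a(\nu)^{h_1} \ll \cdots \ll a(\nu)^{h_n}$ afforded by $h_1 < \cdots < h_n$. First I would record the two facts that drive the argument: (a) since $K_{a,j} = \nu/(a(\nu)2^j) \to \infty$ by \eqref{e:a(nu)/J->infty} and the wavelet coefficients are stationary and quasi-decorrelated (cf.\ Theorem \ref{t:asymptotic_normality_wavecoef_fixed_scales}), a law of large numbers gives $\widehat B_a(2^j) \stackrel{P}\rightarrow B(2^j)$, where $B(2^j) = P^{-1}\bbE W(2^j)(P^*)^{-1}$ is real symmetric positive definite (properness of OFBM and $P \in GL(n,\bbR)$); (b) one has $\bbE W_a(a(\nu)2^j) = P\,a(\nu)^{J_H}B(2^j)a(\nu)^{J_H}P^*$ identically, so the entire argument below applies verbatim with $B(2^j)$ in place of $\widehat B_a(2^j)$ and all $\stackrel{P}\rightarrow$ replaced by deterministic convergence; hence I discuss only the random case. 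Notably, no convergence \emph{rate} for $\widehat B_a(2^j) \to B(2^j)$ will be needed.

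Next I would establish the orders $\lambda_q(M(\nu)) \asymp a(\nu)^{2h_q}$. Writing $v^*M(\nu)v = \sum_{i,i'}a(\nu)^{h_i+h_{i'}}(\widehat B_a(2^j))_{ii'}\langle p_{\cdot,i},v\rangle\langle p_{\cdot,i'},v\rangle$ and applying the Courant--Fischer principle (see \eqref{e:Courant-Fischer}) with the test subspaces $\{p_{\cdot,q+1},\dots,p_{\cdot,n}\}^\perp$ (dimension $q$) for the upper bound and $\textnormal{span}\{(P^*)^{-1}e_q,\dots,(P^*)^{-1}e_n\}$ (dimension $n-q+1$) for the lower bound, the quadratic form collapses onto the leading, respectively trailing, principal block of $\widehat B_a(2^j)$, whose extreme eigenvalues converge to those of the corresponding block of $B(2^j) \succ 0$; this gives $\lambda_q(M(\nu))/a(\nu)^{2h_q}$ bounded away from $0$ and $\infty$ with probability tending to $1$. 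In particular the $n$ eigenvalues have pairwise distinct orders, hence are simple for $\nu$ large, with an orthonormal eigenbasis $\{u_q(\nu)\}$. The case $q=n$ is then immediate from $M(\nu)/a(\nu)^{2h_n} \stackrel{P}\rightarrow b_{nn}(2^j)\,p_{\cdot,n}p_{\cdot,n}^*$ (every $(i,i') \neq (n,n)$ term carries a negative power of $a(\nu)$): this rank-one limit has top eigenvalue $b_{nn}(2^j)\|p_{\cdot,n}\|^2 = b_{nn}(2^j)$, whence $\xi_n(2^j) := b_{nn}(2^j) > 0$ and, by a spectral-gap perturbation bound, $u_n(\nu) \stackrel{P}\rightarrow p_{\cdot,n}$ after a choice of signs.

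For $q<n$ I would argue directly from the eigen-relation $M(\nu)u_q(\nu) = \lambda_q(M(\nu))u_q(\nu)$, with no induction. Taking inner products with $u_q(\nu)$ and using $\widehat B_a(2^j) \succeq cI$ for $\nu$ large gives $\lambda_q(M(\nu)) \ge c\sum_i a(\nu)^{2h_i}\langle p_{\cdot,i},u_q(\nu)\rangle^2$, hence, by the order bound, the a priori estimate $|\langle p_{\cdot,i},u_q(\nu)\rangle| \le C' a(\nu)^{h_q-h_i}$; applying the same reasoning to $M(\nu)^{-1}$ yields $|(P^{-1}u_q(\nu))_i| \le C'' a(\nu)^{h_i-h_q}$. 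Passing to the limit (using $\widehat B_a(2^j)\stackrel{P}\rightarrow B(2^j)$ and compactness of $S^{n-1}$), the first bound forces $\langle p_{\cdot,i},u_q\rangle = 0$ for $i>q$ and the second $(P^{-1}u_q)_i = 0$ for $i<q$, i.e.\ $u_q \in \textnormal{span}\{p_{\cdot,q},\dots,p_{\cdot,n}\}$; since the functionals $\langle p_{\cdot,i},\cdot\rangle$, $i>q$, are independent on that span, these conditions determine $u_q$ up to sign, so $u_q(\nu) \stackrel{P}\rightarrow u_q$, which is $(iii)$. Now set $\widehat y_i(\nu) := \langle p_{\cdot,i},u_q(\nu)\rangle a(\nu)^{h_i-h_q}$ (bounded for $i\ge q$ by the a priori estimate, and $\to 0$ for $i<q$) and $\mu(\nu) := \lambda_q(M(\nu))/a(\nu)^{2h_q}$. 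Rewriting the eigen-relation as $a(\nu)^{J_H}\widehat B_a(2^j)a(\nu)^{J_H}P^*u_q(\nu) = \lambda_q(M(\nu))\,P^{-1}u_q(\nu)$ and dividing the $l$-th coordinate by $a(\nu)^{h_l+h_q}$ gives the clean identity
$$
\big(\widehat B_a(2^j)\,\widehat y(\nu)\big)_l = \mu(\nu)\,a(\nu)^{h_q-h_l}\big(P^{-1}u_q(\nu)\big)_l, \qquad l = 1,\dots,n .
$$
For $l>q$ the right-hand side tends to $0$, so along any convergent subsequence $(B(2^j)\widehat y^\infty)_l = 0$, $l = q+1,\dots,n$; since the trailing $(n-q)\times(n-q)$ block of $B(2^j)$ is invertible and $\widehat y^\infty$ has vanishing first $q-1$ coordinates, this determines $(\widehat y^\infty_{q+1},\dots,\widehat y^\infty_n) =: {\mathbf x}_{q,*}(2^j)$ uniquely in terms of $\widehat y^\infty_q = \langle p_{\cdot,q},u_q\rangle$ — this is the determined linear system \eqref{e:x*_sol_determ}, and it gives $(iv)$. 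Finally, taking the exact identity $\mu(\nu) = \sum_{i,i'}(\widehat B_a(2^j))_{ii'}\widehat y_i(\nu)\widehat y_{i'}(\nu)$ to the limit (the terms with $i<q$ or $i'<q$ vanish) gives $\mu(\nu) \stackrel{P}\rightarrow \xi_q(2^j) := \sum_{i,i'\ge q}B(2^j)_{ii'}\,\widehat y^\infty_i\widehat y^\infty_{i'}$, which is $>0$ because the trailing $(n-q+1)\times(n-q+1)$ block of $B(2^j)$ is positive definite and the relevant vector is nonzero (else $u_q = 0$). This proves $(i)$; and $(ii)$ follows from $(i)$ by the change of variables $\tilde a := 2^j a(\nu)$ together with the entrywise scaling \eqref{e:B(2^j)_entrywise_scaling} (which rewrites $\widehat B_a(2^j)$ as $(2^{j(h_i+h_{i'})}\widehat b(1)_{ii'})$ and thereby produces the factor $2^{2jh_q}$), the case $q=n$ reducing to $\xi_n(1) = b_{nn}(1)$.

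The step I expect to be the main obstacle is not any one computation but the realization — and the careful exploitation — of the fact that the randomness enters $M(\nu)$ only through $\widehat B_a(2^j)$ conjugated by the badly conditioned $a(\nu)^{J_H}$, so that $\|W_a(a(\nu)2^j) - \bbE W_a(a(\nu)2^j)\|$ generically dwarfs the smaller eigenvalues $\lambda_q \asymp a(\nu)^{2h_q}$, and Weyl-type perturbation bounds are useless. The remedy is precisely the route above: work from the eigen-relation, derive the structured a priori eigenvector bounds, and read the limits off the scale-normalized identities — a path along which the separation $h_1<\cdots<h_n$ and the positive definiteness of $B(2^j)$ do the essential work and no rate of convergence of $\widehat B_a(2^j)$ is required. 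A secondary technical point is the Courant--Fischer bookkeeping for the order estimates, since within any fixed subspace the form $v^*M(\nu)v$ genuinely mixes all the scales $a(\nu)^{2h_1},\dots,a(\nu)^{2h_n}$.
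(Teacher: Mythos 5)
Your argument is correct, and it reaches the same limits as the paper by a genuinely different route. The paper splits $W_a(a2^j)/a^{2h_q}$ as $P\widehat{{\mathbf S}}_{\nu,q-1}P^* + P\widehat{{\mathbf T}}_{\nu,n-q+1}P^*$, controls the first piece with Weyl's inequality, passes to the auxiliary matrix $\widehat{{\mathbf U}}_{\nu,n-q+1}$, and characterizes the limit variationally: $\xi_q(2^j)=g_{q,j}({\mathbf x}_{q,*})$ where ${\mathbf x}_{q,*}$ is the unique global minimizer of the quadratic function $g_{q,j}$ of Lemma \ref{l:gtilde_g_min}, with the a priori eigenvector bound supplied by Lemma \ref{l:<p3,u2>a(nu)^(h_3-h_2)=OP(1)} (proved there by contradiction via Courant--Fischer) and a subsequence argument for part $(iv)$. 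You instead work directly from the eigen-relation $M(\nu)u_q(\nu)=\lambda_q(M(\nu))u_q(\nu)$: the bound $|\langle p_{\cdot,i},u_q(\nu)\rangle|\le C a^{h_q-h_i}$ follows cleanly from $\widehat B_a(2^j)\succeq cI$ (a tidier derivation than the paper's contradiction argument), the complementary bound $|(P^{-1}u_q(\nu))_i|\le C a^{h_i-h_q}$ from the same reasoning applied to $M(\nu)^{-1}$ — information the paper extracts only implicitly through the structure of $\widehat{{\mathbf U}}$ and the test sequence $w(\nu)$ — and then ${\mathbf x}_{q,*}$ and $\xi_q(2^j)$ are read off the scale-normalized eigen-equation $(\widehat B_a\widehat y(\nu))_l=\mu(\nu)a^{h_q-h_l}(P^{-1}u_q(\nu))_l$. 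Your linear system $\sum_{i>q}b_{li}x_i=-b_{lq}\langle p_{\cdot,q},u_q\rangle$, $l>q$, is exactly the first-order condition of the paper's minimization (cf.\ \eqref{e:system_x*_sol} and \eqref{e:x*_sol_determ}), so your limit agrees with $g_{q,j}({\mathbf x}_{q,*})$, and your positivity argument is sound since $\langle p_{\cdot,q},u_q\rangle\neq 0$ is forced by $\|u_q\|=1$ together with \eqref{e:angles_uq_in_the_limit}. What the paper's variational formulation buys is the explicit minimization description reused later (the Example after Proposition \ref{p:convergence} and Lemma \ref{l:|lambdaq(EW)-xiq(2^j)|_bound}); what yours buys is a shorter path that avoids the Weyl/auxiliary-matrix bookkeeping and never needs a convergence rate for $\widehat B_a(2^j)$. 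Two small points to tighten in a full write-up: make the ``along any convergent subsequence'' steps precise (convergence in probability is handled either by the standard subsequence-a.s.\ characterization, or, more simply, by applying the inverse of the trailing block of $B(2^j)$, which gives convergence of $(\widehat y_{q+1}(\nu),\dots,\widehat y_n(\nu))$ directly), fix the eigenvector sign convention explicitly (e.g.\ $\langle u_q(\nu),u_q\rangle\ge 0$), and for $(ii)$ note that the re-octaving $\tilde a=2^ja(\nu)$ argument needs the observation that $W_a(a(\nu)2^j)$ depends only on the product scale and that $\tilde a$ still satisfies \eqref{e:a(nu)/J->infty} — otherwise use the entrywise scaling \eqref{e:B(2^j)_entrywise_scaling} on your linear system, as the paper does.
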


\begin{corollary}
Under the assumptions of Proposition \ref{p:convergence}, suppose, in addition, that $P \in O(n)$ in \eqref{e:H_Jordan}. Then, there is a consistent sequence of wavelet eigenvectors for $P$.
\end{corollary}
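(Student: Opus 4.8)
The plan is to show that the orthogonality hypothesis on $P$ pins down the limiting eigenvectors obtained in Proposition \ref{p:convergence}$(iii)$ to be, up to sign, exactly the columns of $P$; a suitable re-orientation of those eigenvector sequences then yields the desired consistent sequence for $P$. No genuinely new estimate is needed — the corollary is read off from Proposition \ref{p:convergence}$(iii)$ — so the work is essentially linear-algebraic bookkeeping.

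First I would use orthogonality to sharpen the span conditions in Proposition \ref{p:convergence}$(iii)$. Since $P \in O(n)$, the columns $p_{\cdot,1},\hdots,p_{\cdot,n}$ form an orthonormal basis of $\bbR^n$, so for $1 \le q \le n-1$ one has $\{p_{\cdot,q+1},\hdots,p_{\cdot,n}\}^{\perp} = \textnormal{span}\{p_{\cdot,1},\hdots,p_{\cdot,q}\}$, and, by linear independence of $p_{\cdot,1},\hdots,p_{\cdot,n}$,
$$
\textnormal{span}\{p_{\cdot,1},\hdots,p_{\cdot,q}\} \cap \textnormal{span}\{p_{\cdot,q},\hdots,p_{\cdot,n}\} = \textnormal{span}\{p_{\cdot,q}\} .
$$
For each $q \in \{1,\hdots,n\}$, Proposition \ref{p:convergence}$(iii)$ furnishes a sequence $\{u_{q}(\nu)\}_{\nu \in \bbN}$ of unit eigenvectors of $W_a(a(\nu)2^j)$ associated with $\lambda_{q}(W_a(a(\nu)2^j))$ with $u_{q}(\nu) \stackrel{P}\rightarrow u_{q}$, where $u_{q}$ is a unit vector lying in $\{p_{\cdot,q+1},\hdots,p_{\cdot,n}\}^{\perp}$ (for $q \le n-1$) and in $\textnormal{span}\{p_{\cdot,q},\hdots,p_{\cdot,n}\}$ (with $u_{n} \in \textnormal{span}\{p_{\cdot,n}\}$ directly). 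Combining these inclusions with the identities above gives $u_{q} \in \textnormal{span}\{p_{\cdot,q}\}$ for every $q$, and hence $u_{q} = s_{q}\,p_{\cdot,q}$ for some $s_{q} \in \{-1,+1\}$, since $\|u_{q}\| = \|p_{\cdot,q}\| = 1$.

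It then remains only to dispose of the sign ambiguity. Because eigenvectors are determined up to sign, $\{s_{q} u_{q}(\nu)\}_{\nu \in \bbN}$ is still a sequence of unit eigenvectors of $W_a(a(\nu)2^j)$ associated with $\lambda_{q}(W_a(a(\nu)2^j))$, and now $s_{q} u_{q}(\nu) \stackrel{P}\rightarrow p_{\cdot,q}$ as $\nu \rightarrow \infty$. Collecting these columnwise into the random matrix $\widehat{P}(\nu)$ whose $q$-th column is $s_{q} u_{q}(\nu)$ yields $\widehat{P}(\nu) \stackrel{P}\rightarrow P$, i.e., a consistent sequence of wavelet eigenvectors for $P$; running the same argument on $\bbE W_a(a(\nu)2^j)$, via the deterministic-convergence clause of Proposition \ref{p:convergence}, produces the analogous deterministic statement. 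I do not expect any real obstacle here: the only mild subtlety is the orientation convention — Proposition \ref{p:convergence}$(iii)$ asserts convergence of \emph{some} unit eigenvector sequence, whereas the columns of $P$ are fixed — and this is handled by the $\pm 1$ rescaling above (equivalently, one may phrase consistency modulo sign).
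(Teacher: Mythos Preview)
Your proposal is correct and is exactly the argument the paper has in mind: the Corollary is stated without a separate proof in the paper because it is an immediate consequence of Proposition~\ref{p:convergence}$(iii)$, and your write-up supplies precisely that deduction. Under $P\in O(n)$ one has $\{p_{\cdot,q+1},\hdots,p_{\cdot,n}\}^{\perp}=\textnormal{span}\{p_{\cdot,1},\hdots,p_{\cdot,q}\}$, which intersected with $\textnormal{span}\{p_{\cdot,q},\hdots,p_{\cdot,n}\}$ pins down $u_q=\pm p_{\cdot,q}$; the sign correction is the only remaining bookkeeping, and you handle it correctly.
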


\begin{remark}\label{r:consistency}
Note that, under the stronger assumptions of Proposition \ref{p:convergence}, \eqref{e:lambdai/a^(2hi)_conv} is a consistency statement that implies the consistency property \eqref{e:log_lambdaE/2_log_a(n)_distinct Re} of wavelet log-eigenvalues.
\end{remark}

\begin{example}
For $q = 1,\hdots,n$ and a fixed $j$, the limiting eigenvalue scaling function $\xi_{q}(2^j)$ in \eqref{e:xi-i0_scales} depends on $2^{j2h_q}$, the matrix $B(1)$ and the angle term $\langle p_{\cdot,q},u_q\rangle$. For the sake of illustration, consider $n = 3$. For $q = 3$, by the entrywise scaling relation \eqref{e:B(2^j)_entrywise_scaling},
$$
\xi_3(2^j) = b_{33}(2^j) = 2^{j2h_3}b_{33}(1) = 2^{j2h_3}\xi_3(1).
$$
Moreover, for $q = 2$, again by the entrywise scaling relation \eqref{e:B(2^j)_entrywise_scaling} and by Lemma \ref{l:gtilde_g_min},
$$
\xi_2(2^j) = \langle p_{\cdot,2},u_2\rangle^2 \frac{b_{22}(2^j)b_{33}(2^j) - b^2_{23}(2^j)}{b_{33}(2^j)}
= \langle p_{\cdot,2},u_2\rangle^2 \frac{2^{j(2h_2 + 2h_3)}}{2^{j2h_3}}\frac{b_{22}(1)b_{33}(1) - b^2_{23}(1)}{b_{33}(1)}
$$
$$
=  2^{j2h_2} \langle p_{\cdot,2},u_2\rangle^2\frac{b_{22}(1)b_{33}(1) - b^2_{23}(1)}{b_{33}(1)} = 2^{j2h_2} \xi_2(1).
$$
Likewise, for $q = 1$,
$$
\xi_1(2^j) = \langle p_{\cdot,1},u_1\rangle^2\Big\{ b_{11}(2^j) + b_{22}(2^j)\Big(\frac{b_{33}(2^j)b_{12}(2^j)-b_{23}(2^j)b_{13}(2^j)}{b_{22}(2^j)b_{33}(2^j)- b^{2}_{23}(2^j)}\Big)^2
$$
$$
+ b_{33}(2^j) \Big( \frac{-b_{23}(2^j)b_{12}(2^j)+b_{22}(2^j)b_{13}(2^j)}{b_{22}(2^j)b_{33}(2^j)- b^{2}_{23}(2^j)} \Big)^2
$$
$$
- 2 b_{12}(2^j)\frac{b_{33}(2^j)b_{12}(2^j)-b_{23}(2^j)b_{13}(2^j)}{b_{22}(2^j)b_{33}(2^j)- b^{2}_{23}(2^j)}
- 2 b_{13}(2^j)\frac{(-b_{23}(2^j)b_{12}(2^j)+b_{22}(2^j)b_{13}(2^j))}{b_{22}(2^j)b_{33}(2^j)- b^{2}_{23}(2^j)}
$$
$$
+ 2 b_{23}(2^j)\frac{(b_{33}(2^j)b_{12}(2^j)-b_{23}(2^j)b_{13}(2^j))(-b_{23}(2^j)b_{12}(2^j)+b_{22}(2^j)b_{13}(2^j))}{(b_{22}(2^j)b_{33}(2^j)- b^{2}_{23}(2^j))^2} \Big\}
= 2^{j 2 h_1}  \xi_{1}(1).
$$
\end{example}

We are now in a position to prove the asymptotic normality of wavelet log-eigenvalues. Apart from Proposition \ref{p:convergence}, the latter is mainly a consequence of the operator self-similarity property \eqref{e:EW(a(nu)2j)_in_the_case_blindsourcing}, Theorem \ref{t:asymptotic_normality_wavecoef_fixed_scales}, and the fact that all eigenvalues of $W_a(a(\nu)2^j)$ become simple for large enough $\nu$ by virtue of condition \eqref{e:H_h1<...<hn}.
\begin{theorem}\label{t:asympt_normality_lambda2}
Let $B_{H} = \{B_H(t)\}_{t \in \bbR}$ be an OFBM under the assumptions (OFBM 1--2). Consider the range of octaves \eqref{e:j1<...<j2_m=j2-j1+1}. If, in addition, $B_{H}$ satisfies (OFBM3$'$), then
\begin{equation}\label{e:asympt_normality_lambda2}
\bbR^{m \times n} \ni \Big( \sqrt{K_{a,j}}\Big( \log \lambda_q(W_a(a(\nu)2^{j}))  - \log \lambda_q(\bbE W_a (a(\nu)2^{j})) \Big)_{q=1,\hdots,n} \Big)_{j=j_1,\hdots,j_2} \stackrel{d}\rightarrow {\mathcal N}(0,\Sigma_{\lambda})
\end{equation}
as $\nu \rightarrow \infty$. If we write the asymptotic covariance matrix in block form $\Sigma_{\lambda} = \Big( \Sigma_{\lambda}(jj') \Big)_{j,j'= j_1,\hdots,j_2}$, then its main diagonal entries satisfy $\Sigma_{\lambda}(jj)_{ii}> 0$, $i = 1,\hdots,n$.
\end{theorem}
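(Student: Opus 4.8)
The plan is to derive the asymptotic normality of the wavelet log-eigenvalues from the already-established asymptotic normality of the sample wavelet variance matrices (Theorem \ref{t:asymptotic_normality_wavecoef_fixed_scales}) by combining the operator self-similarity relation \eqref{e:EW(a(nu)2j)_in_the_case_blindsourcing}, the rescaling by $a(\nu)^{2h_q}$ from Proposition \ref{p:convergence}, a perturbation argument for eigenvalues of symmetric matrices, and the delta method for the logarithm. First I would use \eqref{e:EW(a(nu)2j)_in_the_case_blindsourcing} to write $W_a(a(\nu)2^j) \stackrel{d}= P a(\nu)^{J_H} \widehat{B}_a(2^j) a(\nu)^{J_H^*} P^*$ with $J_H = \textnormal{diag}(h_1,\dots,h_n)$. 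The diagonal conjugating factor $a(\nu)^{J_H}$ inflates the $q$-th direction by $a(\nu)^{h_q}$, which is exactly the reason the $q$-th eigenvalue grows like $a(\nu)^{2h_q}$. Dividing the $q$-th eigenvalue by $a(\nu)^{2h_q}$ and invoking Proposition \ref{p:convergence}(i)--(ii) gives a positive limit $\xi_q(2^j)$; the main work is to track the fluctuations around this limit.

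Next, I would localize: since under (OFBM3$'$) the Hurst eigenvalues are real and simple, the limiting matrix $P\,\textnormal{diag}(\xi_1(2^j),\dots)\,$-type object has simple spectrum, hence for $\nu$ large the eigenvalues $\lambda_q(W_a(a(\nu)2^j))$ are simple a.s. and the maps $S \mapsto \lambda_q(S)$ are analytic near the relevant Hermitian matrices. Working with the rescaled matrix $\widetilde{W}_q(\nu) := a(\nu)^{-2h_q} W_a(a(\nu)2^j)$ — or, more precisely, a suitably rescaled version of $\widehat{B}_a(2^j)$ whose $\sqrt{K_{a,j}}$-fluctuation around its mean is controlled by Theorem \ref{t:asymptotic_normality_wavecoef_fixed_scales} after undoing the $P$-conjugation — I would apply the first-order perturbation expansion $\lambda_q(M + E) = \lambda_q(M) + u_q^* E u_q + O(\|E\|^2)$, where $u_q$ is the limiting unit eigenvector from Proposition \ref{p:convergence}(iii). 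This linearizes $\sqrt{K_{a,j}}(\lambda_q(W_a(a(\nu)2^j)) - \lambda_q(\bbE W_a(a(\nu)2^j)))$ into a linear functional of the Gaussian limit of $\sqrt{K_{a,j}}\,\textnormal{vec}_{\mathcal S}(W_a - \bbE W_a)$, and then the delta method for $x \mapsto \log x$ (valid since $\xi_q(2^j)>0$) transfers this to the log-eigenvalues. Stacking over $q = 1,\dots,n$ and $j = j_1,\dots,j_2$ and using the joint convergence in Theorem \ref{t:asymptotic_normality_wavecoef_fixed_scales} yields the joint normal limit with some covariance $\Sigma_\lambda$.

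For the final positivity claim $\Sigma_\lambda(jj)_{ii} > 0$, I would argue that the linear functional producing the $(j,i)$ coordinate of the limit is not identically zero. Concretely, the diagonal block variance equals (up to the $(\log 2 / 2 \, \xi_i)^2$-type Jacobian factor from the log) a quadratic form $v_i^* F_{jj} v_i$ in the asymptotic covariance $F$ of Theorem \ref{t:asymptotic_normality_wavecoef_fixed_scales}, where $v_i$ encodes the vectorized rank-one projection built from the limiting eigenvector $u_i$ (pulled through the $P$-conjugation and the $a(\nu)^{J_H}$ rescaling). Since $F$ is the asymptotic covariance of a genuine (nondegenerate, by properness / assumption \eqref{e:full-rank}) sample wavelet variance at a single scale, its single-scale block $F_{jj}$ is positive definite on the relevant subspace, so the quadratic form is strictly positive whenever $v_i \neq 0$ — and $v_i \neq 0$ because it contains a nonzero diagonal entry coming from $\|u_i\|=1$ together with the nondegeneracy of the limiting eigenvalue $\xi_i(2^j)>0$. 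I would spell out the nondegeneracy of $F_{jj}$ by referring to the explicit form of $F$ in Proposition 3.3 of \cite{abry:didier:2017}.

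The hard part will be handling the \emph{non-uniformity} introduced by the growing conjugating factor $a(\nu)^{J_H}$: the off-diagonal directions are inflated at different rates $a(\nu)^{h_i - h_q}$, so the perturbation $E$ in $\lambda_q(M+E)$ is not uniformly small in operator norm, and one must show that the dangerous large-entry directions of $a(\nu)^{J_H}\widehat{B}_a a(\nu)^{J_H^*}$ do not contaminate the $q$-th eigenvalue's fluctuation at the $\sqrt{K_{a,j}}$ scale. This is precisely where Proposition \ref{p:convergence}(iv) — the control of $\langle p_{\cdot,\ell}, u_q(\nu)\rangle a(\nu)^{h_\ell - h_q}$ for $\ell > q$ — is essential: it guarantees that the limiting eigenvector $u_q$ has negligible overlap with the directions that blow up, so the quadratic form $u_q^* E u_q$ picks out only the $O(1)$-scale fluctuations. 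Making this rigorous requires a careful block decomposition of $W_a(a(\nu)2^j)$ (bottom-right block corresponding to $h_{q+1},\dots,h_n$, which dominates, versus the remaining directions) together with a Schur-complement representation of $\lambda_q$, analogous to the $n=2$ computations in \cite{abry:didier:2017} but now carried out in general dimension; I expect this block-perturbation bookkeeping to be the technical crux, with everything downstream (delta method, joint convergence, positivity) being comparatively routine.
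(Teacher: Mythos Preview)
Your proposal is correct and follows essentially the same route as the paper. Both arguments parametrize the log-eigenvalue as a function of the \emph{bounded} matrix $\widehat{B}_a(2^j)$ (rather than of $W_a(a(\nu)2^j)$ directly), invoke the differentiability of simple eigenvalues via $d\lambda_q(M)=u_q^*\,dM\,u_q$, use Proposition~\ref{p:convergence}(iii)--(iv) to show that the derivative converges despite the anisotropic inflation by $a(\nu)^{J_H}$, and conclude by the delta method together with the asymptotic normality of $\sqrt{K_{a,j}}\,\textnormal{vec}_{\mathcal S}(\widehat{B}_a(2^j)-B(2^j))$; the positivity $\Sigma_\lambda(jj)_{ii}>0$ follows in both cases from the full rank of the single-scale covariance $\Sigma_B(j)$ combined with the nonvanishing of the limiting derivative (the entry $\langle p_{\cdot,q},u_q\rangle^2>0$).

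The one notable difference in execution is how the ``non-uniformity'' you flag is handled. You propose a block decomposition / Schur-complement representation of $\lambda_q$ to isolate the $O(1)$ fluctuations from the blowing-up directions. The paper instead absorbs the growing factors $a(\nu)^{h_i}$ into a $\nu$-dependent function $f_{\nu,q}(B)=\log\lambda_q\big(a(\nu)^{-2h_q}P\,\textnormal{diag}(a^{h_i})\,B\,\textnormal{diag}(a^{h_i})\,P^*\big)$, applies a matrix mean value theorem to get $f_{\nu,q}(\widehat{B}_a)-f_{\nu,q}(B)=\sum_{i_1,i_2}\partial_{b_{i_1 i_2}}f_{\nu,q}(\breve{B}_a)\cdot(\widehat{b}_{i_1 i_2}-b_{i_1 i_2})$, and then computes directly that $\partial_{b_{i_1 i_2}}f_{\nu,q}=\langle p_{\cdot,i_1},u_q(\nu)\rangle a^{h_{i_1}-h_q}\cdot\langle p_{\cdot,i_2},u_q(\nu)\rangle a^{h_{i_2}-h_q}/\big(a^{-2h_q}\lambda_q\big)$, which converges by Proposition~\ref{p:convergence}(i),(iv). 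This is slightly cleaner than a Schur-complement argument because the cancellation between the exploding conjugation $a^{h_i-h_q}$ (for $i>q$) and the vanishing eigenvector overlap $\langle p_{\cdot,i},u_q(\nu)\rangle$ is made completely explicit entrywise, with no need to invert any subblock; your route would reach the same limit but with more bookkeeping.
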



The asymptotic properties of the wavelet eigenvalue regression estimator \eqref{e:hl-hat} are a consequence of those of wavelet log-eigenvalues, as established in Theorems \ref{t:consistency} and \ref{t:asympt_normality_lambda2}.
\begin{corollary}\label{c:h^q_asymptotically_normal}
Let $B_{H} = \{B_H(t)\}_{t \in \bbR}$ be an OFBM under the assumptions (OFBM 1--2) and consider the estimator described in Definition \ref{def:eigenvalue_estimator}.
\begin{itemize}
\item [$(i)$] If, in addition, $B_{H}$ satisfies (OFBM3), then, for $q = 1,\hdots,n$,
\begin{equation}\label{e:h^q_consistent}
\frac{\widehat{\Re h}_q}{\log_2 a(\nu)} \stackrel{P}\rightarrow \Re h_{q'}, \quad \nu \rightarrow \infty,
\end{equation}
where $q' \in \{1,\hdots,n'\}$ satisfies \eqref{e:n1+...+nq'1<q=<n1+...+nq'}. In particular, if $ h_1 < \hdots < h_n$, then
$$
\frac{\widehat{h}_q}{\log_2 a(\nu)} \stackrel{P}\rightarrow h_{q}.
$$
\item [$(ii)$] If, in addition, $B_{H}$ satisfies (OFBM3$'$), then
\begin{equation}\label{e:h^q_asymptotically_normal}
\sqrt{\frac{\nu}{a(\nu)}} \Big( \widehat{h}_q - h_q \Big)_{q=1,\hdots,n} \stackrel{d}\rightarrow {\mathcal N}(0,M \Sigma_{\lambda} M^*),
\end{equation}
as $\nu \rightarrow \infty$, for some weight matrix $M$ (see \eqref{e:weight_matrix_M}) and $\Sigma_{\lambda}$ as in Theorem \ref{t:asympt_normality_lambda2}.
\end{itemize}
\end{corollary}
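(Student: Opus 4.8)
The plan is to treat the regression estimator \eqref{e:hl-hat} as a fixed ($\nu$-independent) linear functional of the vector of wavelet log-eigenvalues $\big(\log\lambda_q(W_a(a(\nu)2^j))\big)_{j=j_1,\ldots,j_2;\,q=1,\ldots,n}$ and to transport Theorems \ref{t:consistency} and \ref{t:asympt_normality_lambda2} through this map; as in univariate wavelet log-regression, the two constraints \eqref{e:sum_wj=0,sum_jwj=1} are exactly what is needed, $\sum_j w_j=0$ annihilating the diverging, $j$-independent part of $\log\lambda_q(W_a(a(\nu)2^j))$ (which is of order $\log a(\nu)$) and $\sum_j jw_j=1$ normalizing the slope. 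For part $(i)$, one divides \eqref{e:hl-hat} by $\log_2 a(\nu)$, writes $\widehat{\Re h}_q/\log_2 a(\nu)$ as a fixed linear combination of the ratios $\log\lambda_q(W_a(a(\nu)2^j))/(2\log a(\nu))$, applies the first limit in \eqref{e:log_lambdaE/2_log_a(n)} together with the continuous mapping theorem (the sum over $j$ being finite), and invokes \eqref{e:sum_wj=0,sum_jwj=1} — complemented, for the precise limiting constant, by the refinement of the $j$-dependence of $\log\lambda_q(W_a(a(\nu)2^j))$ obtained in the proof of Proposition \ref{p:convergence} via \eqref{e:EW(a(nu)2j)_in_the_case_blindsourcing}. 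The deterministic claim follows verbatim from the second limit in \eqref{e:log_lambdaE/2_log_a(n)}, and the case $h_1<\cdots<h_n$ is read off directly from \eqref{e:log_lambdaE/2_log_a(n)_distinct Re}.

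For part $(ii)$, decompose $\widehat h_q-h_q = T_1(\nu)_q + T_2(\nu)_q$, with stochastic part
\begin{equation*}
T_1(\nu)_q = \frac{1}{2\log 2}\sum_{j=j_1}^{j_2} w_j\Big(\log\lambda_q(W_a(a(\nu)2^j))-\log\lambda_q(\bbE W_a(a(\nu)2^j))\Big)
\end{equation*}
and deterministic bias $T_2(\nu)_q = \frac{1}{2\log 2}\sum_{j=j_1}^{j_2} w_j\log\lambda_q(\bbE W_a(a(\nu)2^j)) - h_q$. Since $\sqrt{\nu/a(\nu)} = 2^{j/2}\sqrt{K_{a,j}}$, the quantity $\sqrt{\nu/a(\nu)}\,T_1(\nu)_q$ is the fixed linear image of the vector in \eqref{e:asympt_normality_lambda2} under a weight matrix $M\in\bbR^{n\times mn}$ assembled from the $w_j$, the rescalings $2^{j/2}$, the factor $\tfrac{1}{2\log 2}$ (from $\log_2=\tfrac{1}{\log 2}\log$) and the selection $q'=q$ — this is \eqref{e:weight_matrix_M} — so Theorem \ref{t:asympt_normality_lambda2} yields $\big(\sqrt{\nu/a(\nu)}\,T_1(\nu)_q\big)_{q=1,\ldots,n}\stackrel{d}\rightarrow {\mathcal N}(0,M\Sigma_\lambda M^*)$. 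For $T_2$, the deterministic form of Proposition \ref{p:convergence}(i)--(ii) gives $\lambda_q(\bbE W_a(a(\nu)2^j)) = a(\nu)^{2h_q}2^{2jh_q}\xi_q(1)(1+o(1))$, so that taking logarithms and cancelling the diverging and $j$-free terms via \eqref{e:sum_wj=0,sum_jwj=1} leaves $T_2(\nu)_q=o(1)$; a quantitative version of that expansion, based on the identity $\bbE W_a(a(\nu)2^j) = P(a(\nu)2^j)^{J_H}B(1)(a(\nu)2^j)^{J^*_H}P^*$ in \eqref{e:EW(a(nu)2j)_in_the_case_blindsourcing}, shows the remainder to be $O(a(\nu)^{-2\varpi_0})$, whence $\sqrt{\nu/a(\nu)}\,T_2(\nu)_q\to 0$ by the growth condition \eqref{e:a(nu)/J->infty} (i.e.\ $\nu/a(\nu)^{1+2\varpi_0}\to 0$). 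Slutsky's lemma then gives \eqref{e:h^q_asymptotically_normal}.

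The main obstacle is the quantitative control of the deterministic bias $T_2(\nu)$: Proposition \ref{p:convergence}(i)--(ii) as stated only asserts convergence of $\lambda_q(\bbE W_a(a(\nu)2^j))/a(\nu)^{2h_q}$ to $\xi_q(2^j)$, with no rate, whereas here the approximation error must be shown to vanish strictly faster than $\sqrt{a(\nu)/\nu}$. Supplying this needs a refined perturbative expansion, in decreasing powers of $a(\nu)$, of the eigenvalues of $\bbE W_a(a(\nu)2^j)=P(a(\nu)2^j)^{J_H}B(1)(a(\nu)2^j)^{J^*_H}P^*$, whose leading correction to the $q$-th eigenvalue is governed by the spectral gaps $h_{q+1}-h_q$ (and by $h_1$) — that is, by the regularity parameter $\varpi_0$ — which is exactly why the sufficient condition $\nu/a(\nu)^{1+2\varpi_0}\to 0$ appears. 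A secondary, routine matter is that $\lambda_q(\cdot)$, hence $\log\lambda_q(\cdot)$, be treated as smooth functions of the matrix entries, which is legitimate because, under (OFBM3$'$), the eigenvalues of $W_a(a(\nu)2^j)$ are a.s.\ simple for all large $\nu$ — as is already used in Theorem \ref{t:asympt_normality_lambda2}.
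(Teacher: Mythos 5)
Your route for part $(i)$ does not go through as written. Feeding the per-scale limits \eqref{e:log_lambdaE/2_log_a(n)} into the finite linear combination \eqref{e:hl-hat} and invoking $\sum_j w_j=0$ annihilates the only term those limits control: each $\log_2\lambda_q(W_a(a(\nu)2^j))$ is $2\Re h_{q'}\log_2 a(\nu)\,(1+o_P(1))$, so $\widehat{\Re h}_q/\log_2 a(\nu)=\Re h_{q'}\sum_j w_j+o_P(1)=o_P(1)$; after the cancellation, the entire content of \eqref{e:h^q_consistent} sits in the $j$-dependence of the eigenvalues, which the per-scale consistency statement leaves buried inside its $o_P(1)$. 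What is needed, and what the paper's proof supplies, is a two-sided bound $C_1(a2^j)^{2\Re h_{q'}}\leq \lambda_q(W_a(a2^j))\leq C_2(a2^j)^{2\Re h_{q'}}(1+o_P(\log^{2(n-1)}a))$ with $C_1,C_2$ not depending on $j$ (see \eqref{e:lambdaq_double_bound_in_prob}), obtained by rerunning the Courant--Fischer argument of Theorem \ref{t:consistency} with $(a2^j)^{H}$ in place of $a^{H}$; the constraint $\sum_j jw_j=1$ then extracts the claimed constant. Your proposed source for the ``refinement of the $j$-dependence,'' Proposition \ref{p:convergence}, is proved under (OFBM3$'$) (real, simple Hurst eigenvalues, real $P$) and is not available under (OFBM3), which is precisely the regime --- Jordan blocks, repeated or complex eigenvalues --- that part $(i)$ is meant to cover. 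So the argument for $(i)$ has a genuine hole exactly where the general case matters.

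For part $(ii)$ your architecture coincides with the paper's: $\sqrt{\nu/a(\nu)}=2^{j/2}\sqrt{K_{a,j}}$, the stochastic term is the image of the vector in \eqref{e:asympt_normality_lambda2} under the weight matrix \eqref{e:weight_matrix_M}, and the deterministic bias is removed by the exact scaling $\xi_q(a(\nu)2^j)=(a(\nu)2^j)^{2h_q}\xi_q(1)$ from \eqref{e:xi-i0_scales} together with \eqref{e:sum_wj=0,sum_jwj=1}, leaving only the discrepancy $\lambda_q(\bbE W_a(a(\nu)2^j))/a(\nu)^{2h_q}-\xi_q(2^j)$, which must decay at a polynomial rate in $a(\nu)$, uniformly over $j\in\{j_1,\ldots,j_2\}$, so that multiplication by $\sqrt{\nu/a(\nu)}$ still vanishes under \eqref{e:a(nu)/J->infty}. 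But you only assert that rate (``a refined perturbative expansion \ldots\ shows the remainder to be $O(a(\nu)^{-2\varpi_0})$''); this is exactly the content of the paper's Lemma \ref{l:|lambdaq(EW)-xiq(2^j)|_bound}, which gives the bound $C/a(\nu)^{\min_{1\leq q_1<q_2\leq n}(h_{q_2}-h_{q_1})}$ with $C$ independent of $j$, and whose proof is the main technical work behind the corollary: quantitative eigenvector-angle estimates of the type \eqref{e:inner*scaling=o(1)}, Weyl's inequality, and carefully chosen comparison vectors in $\textnormal{span}\{p_{\cdot,q},\ldots,p_{\cdot,n}\}$. You correctly identify this as the obstacle, but identifying it is not overcoming it, so as it stands the proof of $(ii)$ is incomplete at precisely the point where the paper's proof has substance; the remaining ingredients (the decomposition, the weight matrix $M$, the appeal to Theorem \ref{t:asympt_normality_lambda2}, and the role of $\nu/a(\nu)^{1+2\varpi_0}\rightarrow 0$) are correct and match the paper.
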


As discussed before the statement of Theorem \ref{t:asympt_normality_lambda2}, assumption \eqref{e:H_h1<...<hn} of simple Hurst eigenvalues plays an important role in \eqref{e:asympt_normality_lambda2} and \eqref{e:h^q_asymptotically_normal}. Proposition \ref{p:h1=...=hn}, stated next, provides a basic framework for testing the hypothesis that there is a single Hurst eigenvalue with multiplicity $n$. To establish it, we make the following assumption.

\medskip

\noindent {\sc Assumption (OFBM3$''$)}:
\begin{equation}\label{e:h1=...=hn}
0 < h := h_1 = \hdots = h_n < 1, \quad P \in GL(n,\bbR),
\end{equation}
and
\begin{equation}\label{e:AA*_has pairwise distinct eigenvalues}
\textnormal{every eigenvalue of $AA^*$ is simple}.
\end{equation}

\medskip

\begin{proposition}\label{p:h1=...=hn}
Let $B_{H} = \{B_H(t)\}_{t \in \bbR}$ be an OFBM satisfying the assumptions (OFBM 1--2,3$''$). Then, the weak limits \eqref{e:log_lambdaE/2_log_a(n)}, \eqref{e:asympt_normality_lambda2}, \eqref{e:h^q_consistent} and \eqref{e:h^q_asymptotically_normal} hold, namely, the wavelet log-eigenvalues and the wavelet eigenvalue regression estimator \eqref{e:hl-hat} are consistent and asymptotically normal for their respective Hurst eigenvalues.
\end{proposition}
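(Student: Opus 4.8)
The plan is to reduce everything to a non-degenerate \emph{exact}-scaling situation. Under (OFBM3$''$) we have $H = P(hI)P^{-1} = hI$, so in the operator self-similarity relation \eqref{e:EW(a(nu)2j)_in_the_case_blindsourcing} one has $a(\nu)^{J_H} = a(\nu)^{h}I$ and hence, jointly over $j = j_1,\hdots,j_2$,
\begin{equation*}
W_a(a(\nu)2^j) \stackrel{d}= a(\nu)^{2h}\,W_a(2^j), \qquad \bbE W_a(a(\nu)2^j) = a(\nu)^{2h}\,\bbE W(2^j),
\end{equation*}
where $W_a(2^j) = K_{a,j}^{-1}\sum_{k=1}^{K_{a,j}}D(2^j,k)D(2^j,k)^{*}$ (as in \eqref{e:Bhat_a(2^j)_B(2^j)}). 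Next, from the harmonizable representation \eqref{e:OFBM_harmonizable} with $D = (h-1/2)I$ scalar, a direct wavelet-coefficient covariance computation gives $\bbE W(2^j) = c^{+}_{j}AA^{*} + c^{-}_{j}\overline{AA^{*}}$ with $c^{\pm}_{j} > 0$; time reversibility (OFBM2) makes $AA^{*}$ real, so the two terms merge, and (OFBM1) makes it full rank. Thus $\bbE W(2^j) = \kappa_j AA^{*}$ with $\kappa_j = 2^{2jh}\kappa_1 > 0$, so $\bbE W(2^j) \in {\mathcal H}_{>0}(n,\bbR)$ and, by \eqref{e:AA*_has pairwise distinct eigenvalues}, its eigenvalues $\kappa_j\sigma_1 < \hdots < \kappa_j\sigma_n$ are simple. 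This simplicity is the substitute, in the present degenerate regime, for the ``eigenvalues of $W_a(a(\nu)2^j)$ become simple for large $\nu$'' mechanism supplied by \eqref{e:H_h1<...<hn} in Theorem \ref{t:asympt_normality_lambda2}.

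For the consistency claims \eqref{e:log_lambdaE/2_log_a(n)} and \eqref{e:h^q_consistent} — where every Hurst eigenvalue equals $h$, so all right-hand sides are $h$ — the law of large numbers for the stationary sequence $\{D(2^j,k)D(2^j,k)^{*}\}_{k}$ (using $K_{a,j}\rightarrow\infty$, which holds by \eqref{e:a(nu)/J->infty}) gives $W_a(2^j) \stackrel{P}\rightarrow \bbE W(2^j)$, hence $\lambda_q(W_a(2^j)) \stackrel{P}\rightarrow \kappa_j\sigma_q > 0$ by continuity of eigenvalues. Combined with the exact scaling above,
\begin{equation*}
\frac{\log\lambda_q(W_a(a(\nu)2^j))}{2\log a(\nu)} \stackrel{d}= h + \frac{\log\lambda_q(W_a(2^j))}{2\log a(\nu)} \stackrel{P}\rightarrow h,
\end{equation*}
and likewise, deterministically, for $\bbE W_a(a(\nu)2^j)$; the statements for $\widehat{\Re h}_q$ follow by applying the weighted sum \eqref{e:hl-hat}, exactly as Corollary \ref{c:h^q_asymptotically_normal}$(i)$ is obtained from Theorem \ref{t:consistency}.

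For the asymptotic normality claims \eqref{e:asympt_normality_lambda2} and \eqref{e:h^q_asymptotically_normal}, the factors $a(\nu)^{2h}$ cancel inside $\log\lambda_q(\cdot)$, so the centered, $\sqrt{K_{a,j}}$-rescaled log-eigenvalue array equals in distribution $\big(\sqrt{K_{a,j}}\,(\log\lambda_q(W_a(2^j)) - \log\lambda_q(\bbE W(2^j)))\big)_{q,j}$, a smooth transformation of $\big(\sqrt{K_{a,j}}\,\textnormal{vec}_{{\mathcal S}}(W_a(2^j) - \bbE W(2^j))\big)_{j}$. The latter is asymptotically normal by the same quadratic-form central limit theorem underlying Theorem \ref{t:asymptotic_normality_wavecoef_fixed_scales}, now with normalization $\sqrt{K_{a,j}}$ and $K_{a,j}\rightarrow\infty$. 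Since $\bbE W(2^j)$ has simple eigenvalues, $S \mapsto (\log\lambda_1(S),\hdots,\log\lambda_n(S))$ is $C^1$ near $\bbE W(2^j)$ in ${\mathcal H}_{>0}(n,\bbR)$, with $\partial\lambda_q/\partial S$ the orthogonal projector onto the $q$-th eigenline; the delta method then yields \eqref{e:asympt_normality_lambda2} with an explicit $\Sigma_{\lambda}$, and $\Sigma_{\lambda}(jj)_{ii} > 0$ follows as in Theorem \ref{t:asympt_normality_lambda2} because the gradient of $\log\lambda_i$ is nonzero and pairs nontrivially with the nondegenerate covariance of the wavelet-matrix fluctuations. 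Finally, \eqref{e:h^q_asymptotically_normal} follows by writing $\sqrt{\nu/a(\nu)}\,(\widehat h_q - h)_q$ as a fixed linear functional of this Gaussian limit through $\sqrt{\nu/a(\nu)} = 2^{j/2}\sqrt{K_{a,j}}$ and \eqref{e:sum_wj=0,sum_jwj=1} — exact scaling forces the population regression output to equal $h$ identically, so there is no bias term — mirroring Corollary \ref{c:h^q_asymptotically_normal}$(ii)$.

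The main obstacle is the one flagged above: under (OFBM3$''$) the wavelet eigenvalues do not pull apart through distinct powers of $a(\nu)$, so their eventual simplicity must be extracted from the limit $a(\nu)^{-2h}W_a(a(\nu)2^j) \stackrel{P}\rightarrow \kappa_j AA^{*}$ — which is exactly where assumption \eqref{e:AA*_has pairwise distinct eigenvalues} enters — and the delta-method argument must then be carried out with the intermediate central limit theorem for $W_a(2^j)$, keeping track of the $j$-dependence of the $K_{a,j}$, rather than with the $a(\nu)$-rescaled matrices directly.
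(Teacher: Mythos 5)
Your proposal is correct and follows essentially the same route as the paper's proof: exact scaling under $H=hI$ cancels the $a(\nu)^{2h}$ factors, the identity $\bbE W(2^j)\propto AA^*$ together with \eqref{e:AA*_has pairwise distinct eigenvalues} gives simple eigenvalues so that the log-eigenvalue map is differentiable near $\bbE W(2^j)$, the delta method applied to the $\sqrt{K_{a,j}}$-normalized wavelet-covariance CLT yields \eqref{e:asympt_normality_lambda2}, and the relation $\lambda_q(\bbE W_a(a2^j))=(a2^j)^{2h}\lambda_q(\bbE W_a(1))$ with \eqref{e:sum_wj=0,sum_jwj=1} removes the bias and gives \eqref{e:h^q_asymptotically_normal}. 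The only cosmetic differences are that you work with $W_a(2^j)$ directly rather than with $f_q(B)=\log\lambda_q(PBP^*)$ evaluated at $\widehat B_a(2^j)$, and you spell out the consistency claims, which the paper leaves to Theorem \ref{t:consistency} and Corollary \ref{c:h^q_asymptotically_normal}$(i)$ since (OFBM3$''$) is a special case of (OFBM3).
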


\begin{remark}\label{r:difficulties}
A convergent sequence of wavelet eigenvectors (see Proposition \ref{p:convergence}) is required in the proof of Theorem \ref{t:asympt_normality_lambda2}. However, the existence of such a sequence is in general not guaranteed. For instance, without \eqref{e:AA*_has pairwise distinct eigenvalues}, eigenvectors do not necessarily converge under \eqref{e:h1=...=hn}. Under the latter condition, the asymptotic distribution of
\begin{equation}\label{e:log_lambdaq}
\log \lambda_q(W_a(a(\nu)2^j)), \quad q = 1,\hdots,n,
\end{equation}
depends on whether or not $\bbE W(2^j)$ has simple eigenvalues. In particular, \eqref{e:log_lambdaq} may not be asymptotically normal (c.f.\ Abry et al.\ \cite{abry:didier:li:2017}, Proposition F.1). Tackling the most general case of multiple blocks of Hurst eigenvalues with algebraic multiplicity greater than 1 requires addressing all these issues.
\end{remark}

\section{Asymptotic theory: discrete time}\label{s:asymptotic_theory_discrete}

In this section, instead of a continuous time OFBM path $\{B_H(t)\}_{t \in \bbR}$,  we assume that only a discrete OFBM sample
\begin{equation}\label{e:BH(k)}
\{B_H(k)\}_{k \in \bbZ}
\end{equation}
is available. Starting from the so-called discretized wavelet coefficients (as defined in \eqref{e:approxD_2j,k} below), we develop the asymptotic properties of wavelet log-eigenvalues, as well as of the redefined wavelet eigenvalue regression estimator.

We suppose the wavelet approximation coefficients stem from Mallat's pyramidal algorithm, under a multiresolution analysis of $L^2(\bbR)$ (MRA; see Mallat \cite{mallat:1999}, chapter 7, and Stoev et al.\ \cite{stoev:pipiras:taqqu:2002}, Proposition 2.4 and Theorem 3.2). Accordingly, we need to replace ($W2$) with the following more restrictive condition.

\medskip

\noindent {\sc Assumption ($W2'$)}:
\begin{align*}
& \textnormal{the functions $\varphi$ (a bounded scaling function) and $\psi$ correspond to a MRA of $L^2(\bbR)$,} \\
& \textnormal{and $\textnormal{supp}(\varphi)$ and $\textnormal{supp}(\psi)$ are compact intervals.}
\end{align*}


\noindent Throughout this section, we assume that ($W1$), ($W2'$) and ($W3$) hold. Given \eqref{e:BH(k)}, we initialize the algorithm with the vector-valued sequence
$$
\bbR^n \ni \widetilde{a}_{0,k} := a_{\varphi}B_H(k), \quad k \in \bbZ, \quad a_{\varphi} := \int_{\bbR}\varphi(t) dt,
$$
also called the approximation coefficients at scale $2^0 = 1$. At coarser scales $2^j$, Mallat's algorithm is characterized by the iterative procedure
$$
\widetilde{a}_{j+1,k} = \sum_{k'\in \bbZ}h_{k'- 2k}\widetilde{a}_{j,k'}, \quad \widetilde{d}_{j+1,k} = \sum_{k'\in \bbZ}g_{k'- 2k}\widetilde{a}_{j,k'}, \quad j \in \bbN, \quad k \in \bbZ,
$$
where the filter sequences $\{h_k\}_{k \in \bbZ}$, $\{g_k\}_{k \in \bbZ}$ are called low- and high-pass MRA filters, respectively. Due to ($W2'$), only a finite number of filter terms is nonzero, which is convenient for computational purposes (see Daubechies \cite{daubechies:1992}, chapter 6).

\medskip

\begin{definition}\label{def:eigenvalue_estimator_discrete}
The normalized discretized wavelet coefficients are defined by
\begin{equation}\label{e:approxD_2j,k}
\bbR^{n} \ni \widetilde{D}(2^j,k) := 2^{-j/2} \widetilde{d}_{j,k}.
\end{equation}
Let $j, \log_2 a(\nu) \in \bbN$, and let $\widetilde{D}(a(\nu)2^j,k)$ be the discretized wavelet coefficient \eqref{e:approxD_2j,k} at scale $a(\nu)2^j$ and shift $k \in \{1,\hdots,K_{a,j}\}$. We define the associated sample wavelet variance by
$$
\widetilde{W}(a(\nu)2^j) = \frac{1}{K_{a,j}}\sum^{K_{a,j}}_{k=1}\widetilde{D}(a(\nu)2^j,k)\widetilde{D}(a(\nu)2^j,k)^*.
$$
Likewise, the discrete time wavelet eigenvalue regression estimator is defined by the relation
\begin{equation}\label{e:hl-tilde}
\widetilde{\Re} h_q =  \frac{1}{2}\sum_{j=j_1}^{j_2} w_j \log_2 \lambda_{q}(\widetilde{W}(a(\nu)2^j)),\quad  q = 1,\hdots,n,
\end{equation}
where the weights $w_j$, $j = j_1,\hdots,j_2$, satisfy \eqref{e:sum_wj=0,sum_jwj=1}.
\end{definition}

The following theorem contains the discrete time version of the main results in Section \ref{s:asymptotic_theory}.
\begin{theorem}\label{t:asympt_log_a(nu)_discrete}
Let $B_{H} = \{B_H(t)\}_{t \in \bbR}$ be an OFBM under the assumptions (OFBM 1--2) and the condition
\begin{equation}\label{e:eigen-assumption_stronger}
\Re(h_{q}) \in (0,1)\backslash \{1/2\},\quad q=1,\ldots,n
\end{equation}
on its Hurst eigenvalues. Consider the estimator described in Definition \ref{def:eigenvalue_estimator_discrete} and the following three different settings.
\begin{itemize}
\item [($i$)] If, in addition, $B_{H}$ satisfies (OFBM3), then, as $\nu \rightarrow \infty$,
\begin{itemize}
\item [(a)]
\begin{equation}\label{e:log_lambdaE/2_log_a(n)_discrete}
\frac{\log\lambda_{q}(\widetilde{W}(a(\nu)2^j))}{2 \log a(\nu)} \stackrel{P}\rightarrow \Re h_{q'}, \quad q = 1,\hdots,n,
\end{equation}
where $q' \in \{1,\hdots,n'\}$ satisfies \eqref{e:n1+...+nq'1<q=<n1+...+nq'}. In particular, if $ h_1 < \hdots < h_n$, then
\begin{equation}\label{e:log_lambdaE/2_log_a(n)_distinct Re_discrete}
\frac{\log\lambda_{q}(\widetilde{W}(a(\nu)2^j))}{2 \log a(\nu)} \stackrel{P}\rightarrow h_{q}, \quad q = 1,\hdots,n;
\end{equation}
\item [(b)] for $q = 1,\hdots,n$,
\begin{equation}\label{e:h^q_consistent_discrete}
\frac{\widetilde{\Re}h_q}{\log_2 a(\nu)} \stackrel{P}\rightarrow \Re h_{q'},
\end{equation}
as $\nu \rightarrow \infty$, where $q' \in \{1,\hdots,n'\}$ satisfies \eqref{e:n1+...+nq'1<q=<n1+...+nq'}.
\end{itemize}
\item [($ii$)] If, in addition, $B_{H}$ satisfies (OFBM3$'$), then, as $\nu \rightarrow \infty$,
\begin{itemize}
\item [(c)]
\begin{equation}\label{e:asympt_normality_lambda2_discrete}
\Big( \sqrt{K_{a,j}}\Big( \log \lambda_{q}(\widetilde{W}(a(\nu)2^{j}))  - \log \lambda_{q}(\bbE W_a (a(\nu)2^{j})) \Big)_{q=1,\hdots,n} \Big)_{j=j_1,\hdots,j_2} \stackrel{d}\rightarrow {\mathcal N}(0,\Sigma_{\lambda})
\end{equation}
where $\Sigma_{\lambda}$ is given in Theorem \ref{t:asympt_normality_lambda2};
\item [(d)]
\begin{equation}\label{e:h^q_asymptotically_normal_discrete}
\sqrt{\frac{\nu}{a(\nu)}} \Big( \widetilde{h}_q - h_q \Big)_{q=1,\hdots,n} \stackrel{d}\rightarrow {\mathcal N}(0,M \Sigma_{\lambda} M^*)
\end{equation}
for some weight matrix $M$ (see \eqref{e:weight_matrix_M}) and $\Sigma_{\lambda}$ as in Theorem \ref{t:asympt_normality_lambda2}.
\item [(e)] there is a consistent sequence of wavelet eigenvectors for $P$ assuming $P \in O(n)$ in \eqref{e:H_Jordan}.
\end{itemize}
\item [$(iii)$] If, in addition, $B_{H}$ satisfies (OFBM3$''$), then
\begin{itemize}
\item [(f)] the weak limits \eqref{e:asympt_normality_lambda2_discrete} and \eqref{e:h^q_asymptotically_normal_discrete} hold.
\end{itemize}
\end{itemize}
\end{theorem}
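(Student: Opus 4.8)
\textbf{Proof plan for Theorem~\ref{t:asympt_log_a(nu)_discrete}.}
The idea is to deduce every assertion from its continuous-time counterpart in Section~\ref{s:asymptotic_theory} --- Theorem~\ref{t:consistency} for $(i)$; Theorem~\ref{t:asympt_normality_lambda2}, Corollary~\ref{c:h^q_asymptotically_normal}, and Proposition~\ref{p:convergence} together with its Corollary for $(ii)$; and Proposition~\ref{p:h1=...=hn} for $(iii)$ --- by showing that, at the coarse scales $a(\nu)2^j$ relevant here (recall that $\log_2 a(\nu)\in\bbN$, so such scales are dyadic and produced by the pyramid algorithm), the discretized sample wavelet variance $\widetilde W(a(\nu)2^j)$ differs from the continuous-time object $W_a(a(\nu)2^j)$ of \eqref{e:Wa(a2^j)} by an amount that is negligible \emph{relative to the operator self-similar structure} \eqref{e:EW(a(nu)2j)_in_the_case_blindsourcing}. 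Writing $W_a(a(\nu)2^j)\stackrel{d}= P\hspace{0.3mm}a(\nu)^{J_H}\widehat B_a(2^j)a(\nu)^{J_H^{*}}P^{*}$ as in \eqref{e:EW(a(nu)2j)_in_the_case_blindsourcing}--\eqref{e:Bhat_a(2^j)_B(2^j)}, set (all wavelet variances below at scale $a(\nu)2^j$)
\[
\widehat E(a(\nu)2^j):=\big(a(\nu)^{J_H}\big)^{-1}P^{-1}\big(\widetilde W-W_a\big)\big(P^{*}\big)^{-1}\big(a(\nu)^{J_H^{*}}\big)^{-1},
\]
so that $\widetilde W(a(\nu)2^j)=P\hspace{0.3mm}a(\nu)^{J_H}\big(\widehat B_a(2^j)+\widehat E(a(\nu)2^j)\big)a(\nu)^{J_H^{*}}P^{*}$. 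The first --- and essentially only substantive --- step is an auxiliary lemma (Appendix) stating that, under (OFBM1--2) and \eqref{e:eigen-assumption_stronger}, $\widehat E(a(\nu)2^j)\stackrel{P}\rightarrow 0$, and moreover, under the range of octaves \eqref{e:j1<...<j2_m=j2-j1+1} and the growth condition \eqref{e:a(nu)/J->infty}, $\sqrt{K_{a,j}}\,\widehat E(a(\nu)2^j)\stackrel{P}\rightarrow 0$ for $j=j_1,\hdots,j_2$; the same two statements hold with $\bbE\widetilde W(a(\nu)2^j)$ and $\bbE W_a(a(\nu)2^j)$ replacing the random matrices.

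To prove this lemma I would exploit that $P^{-1}$ commutes with the wavelet and pyramid operations. Set $Y:=P^{-1}B_H$, an OFBM with Hurst matrix $J_H$; under (OFBM3$'$) (resp.\ (OFBM3$''$)) each marginal $\{Y(t)_q\}_{t\in\bbR}$ is an FBM with Hurst exponent $h_q$ (resp.\ $h$), while under (OFBM3) the marginals within a Jordan block form a single-block OFBM of index $\Re h_{q'}$. Since $\widetilde D(2^j,k)$ of \eqref{e:approxD_2j,k} is, for each $j,k$, a fixed finite linear combination of $\{B_H(m)\}_m$ (the MRA filters have finite support under $(W2')$), the vector $P^{-1}\widetilde D(2^j,k)$ is precisely the pyramid-algorithm wavelet coefficient computed from the discrete sample $\{Y(m)\}_m$, whereas $P^{-1}D(2^j,k)$ is the continuous-time wavelet coefficient of $Y$; hence $P^{-1}(\widetilde W-W_a)(P^{*})^{-1}$ is an average over $k$ of products involving the (block-)scalar discretization errors of the wavelet coefficients of the marginals of $Y$. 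By the harmonizable representation \eqref{e:OFBM_harmonizable}, the transfer function of such an error inherits the $N_{\psi}$ vanishing moments of $\psi$ and, in addition, decays at a rate dictated by the MRA approximation properties (Mallat \cite{mallat:1999}, chapter 7; Stoev et al.\ \cite{stoev:pipiras:taqqu:2002}; Moulines et al.\ \cite{moulines:roueff:taqqu:2008}); integrated against the spectral weight of \eqref{e:OFBM_harmonizable} --- with \eqref{e:eigen-assumption_stronger} ($\Re h_q\neq 1/2$) securing finiteness and non-degeneracy of the relevant integrals near $x=0$ and as $|x|\to\infty$ --- this gives that the $q$-th marginal discretization error at scale $a(\nu)2^j$ has $L^2$-norm of order $a(\nu)^{\Re h_q}(a(\nu)2^j)^{-\beta/2}$ for some $\beta>0$ which $(W1)$--$(W3)$ ($N_{\psi}\geq 2$, $\alpha>1$) make large enough that $\sqrt{K_{a,j}}\,a(\nu)^{-\beta/2}\to 0$ in view of \eqref{e:a(nu)/J->infty}. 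De-scaling by $a(\nu)^{-J_H}$ then turns these marginal bounds into the claimed bounds on $\widehat E(a(\nu)2^j)$ (off-diagonal entries by Cauchy--Schwarz, and the fluctuation part by the quasi-decorrelation argument underlying Theorem~\ref{t:asymptotic_normality_wavecoef_fixed_scales}).

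Granting the lemma, the assertions become transcriptions of the continuous-time proofs. For $(i)$, since $\widetilde W(a(\nu)2^j)=P\hspace{0.3mm}a(\nu)^{J_H}\big(\widehat B_a(2^j)+\widehat E(a(\nu)2^j)\big)a(\nu)^{J_H^{*}}P^{*}$ with $\widehat B_a(2^j)+\widehat E(a(\nu)2^j)\stackrel{P}\rightarrow B(2^j)\in{\mathcal H}_{>0}(n,\bbR)$, the Courant--Fischer argument of Theorem~\ref{t:consistency} applies verbatim, yielding \eqref{e:log_lambdaE/2_log_a(n)_discrete}--\eqref{e:log_lambdaE/2_log_a(n)_distinct Re_discrete}; part (b) then follows exactly as Corollary~\ref{c:h^q_asymptotically_normal}$(i)$, by taking the weighted combination \eqref{e:hl-tilde} with weights obeying \eqref{e:sum_wj=0,sum_jwj=1}. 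For $(ii)$, add and subtract $\log\lambda_q(W_a(a(\nu)2^j))$: the term $\sqrt{K_{a,j}}\big(\log\lambda_q(W_a(a(\nu)2^j))-\log\lambda_q(\bbE W_a(a(\nu)2^j))\big)$ converges to ${\mathcal N}(0,\Sigma_{\lambda})$ by Theorem~\ref{t:asympt_normality_lambda2}, while the remainder $\sqrt{K_{a,j}}\big(\log\lambda_q(\widetilde W(a(\nu)2^j))-\log\lambda_q(W_a(a(\nu)2^j))\big)$ converges to $0$ in probability, since the eigenvalue/delta-method expansion underlying the proof of Theorem~\ref{t:asympt_normality_lambda2} --- being linear in $\widehat B_a(2^j)+\widehat E(a(\nu)2^j)-B(2^j)$ --- contributes only an extra term of the form $\sqrt{K_{a,j}}\cdot(\text{linear functional of }\widehat E(a(\nu)2^j))\stackrel{P}\rightarrow 0$; this gives (c), (d) follows as in Corollary~\ref{c:h^q_asymptotically_normal}$(ii)$ via the weight matrix $M$ of \eqref{e:weight_matrix_M}, and (e) follows by running the Corollary to Proposition~\ref{p:convergence} with $\widetilde W(a(\nu)2^j)$ in place of $W_a(a(\nu)2^j)$, because (under $P\in O(n)$ in \eqref{e:H_Jordan}) the eigenvector limits \eqref{e:u1,u2,u3_conv}--\eqref{e:inner*scaling=o(1)} depend only on the in-probability limit $B(2^j)$ of $\widehat B_a(2^j)+\widehat E(a(\nu)2^j)$. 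Part $(iii)$ is identical, with Proposition~\ref{p:h1=...=hn} --- whose hypothesis (OFBM3$''$) again renders the marginals of $Y$ FBM with common exponent $h$, so the lemma applies --- in place of Theorem~\ref{t:asympt_normality_lambda2} and Corollary~\ref{c:h^q_asymptotically_normal}.

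The main obstacle is the auxiliary lemma, and within it the need for an error estimate that is small \emph{relative to each wavelet eigenvalue scale} $a(\nu)^{2\Re h_{q'}}$ separately: a naive operator-norm bound, which lives at the scale $a(\nu)^{2\Re h_{n'}}$ of the \emph{largest} eigenvalue, is useless for the smaller wavelet eigenvalues $\lambda_q(\widetilde W(a(\nu)2^j))$, $q<n$, and would let their logarithms drift away from $2\Re h_{q'}\log a(\nu)$. Securing the entrywise (de-mixed, de-scaled) bound --- via the decoupling by $P^{-1}$ into (block-)scalar FBM discretization estimates that are uniform over the coarse scales $a(\nu)2^j$, with $\Re h_q\neq 1/2$ as the non-degeneracy hypothesis --- is the technically delicate step; everything downstream is a faithful re-run of the continuous-time arguments once $\widehat B_a(2^j)$ is replaced by $\widehat B_a(2^j)+\widehat E(a(\nu)2^j)$.
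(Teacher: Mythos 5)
Your proposal follows essentially the same route as the paper: your matrix $\widehat E(a(\nu)2^j)$ is exactly the demixed, descaled difference $\widetilde B_{\nu}(2^j)-\widehat B_a(2^j)$ of the paper's auxiliary Lemma \ref{l:vecBtilde(2^j)_asympt} (which the paper proves by deferring to Lemma C.2 of Abry and Didier \cite{abry:didier:2017}, using $\nu/a(\nu)^{1+2\Re h_1}\to 0$ and the moving-average representation requiring \eqref{e:eigen-assumption_stronger}), and your subsequent steps --- Courant--Fischer for $(i)$, the add-and-subtract of $\log\lambda_q(W_a(a(\nu)2^j))$ plus the mean-value/eigenvalue expansion and $\sqrt{K_{a,j}}\widehat E\to 0$ for $(c)$, and the reduction of $(b)$, $(d)$, $(e)$, $(f)$ to Corollary \ref{c:h^q_asymptotically_normal}, Proposition \ref{p:convergence} and Proposition \ref{p:h1=...=hn} --- coincide with the paper's proof. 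The only cosmetic differences are that you state the key lemma with convergence in probability rather than in $L^1(P)$ and sketch its proof directly via MRA discretization estimates instead of citing the earlier supplementary material.
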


\begin{remark}
The assumption \eqref{e:eigen-assumption_stronger} stems from a technical condition for the existence of a convenient moving average representation of OFBM (see Didier and Pipiras \cite{didier:pipiras:2011}, Theorem 3.2). Even after removing \eqref{e:eigen-assumption_stronger}, the properties listed in Theorem 4.1 are expected to hold in general.
\end{remark}

\begin{figure}[!h]
\centerline{
\includegraphics[width=0.5\linewidth]{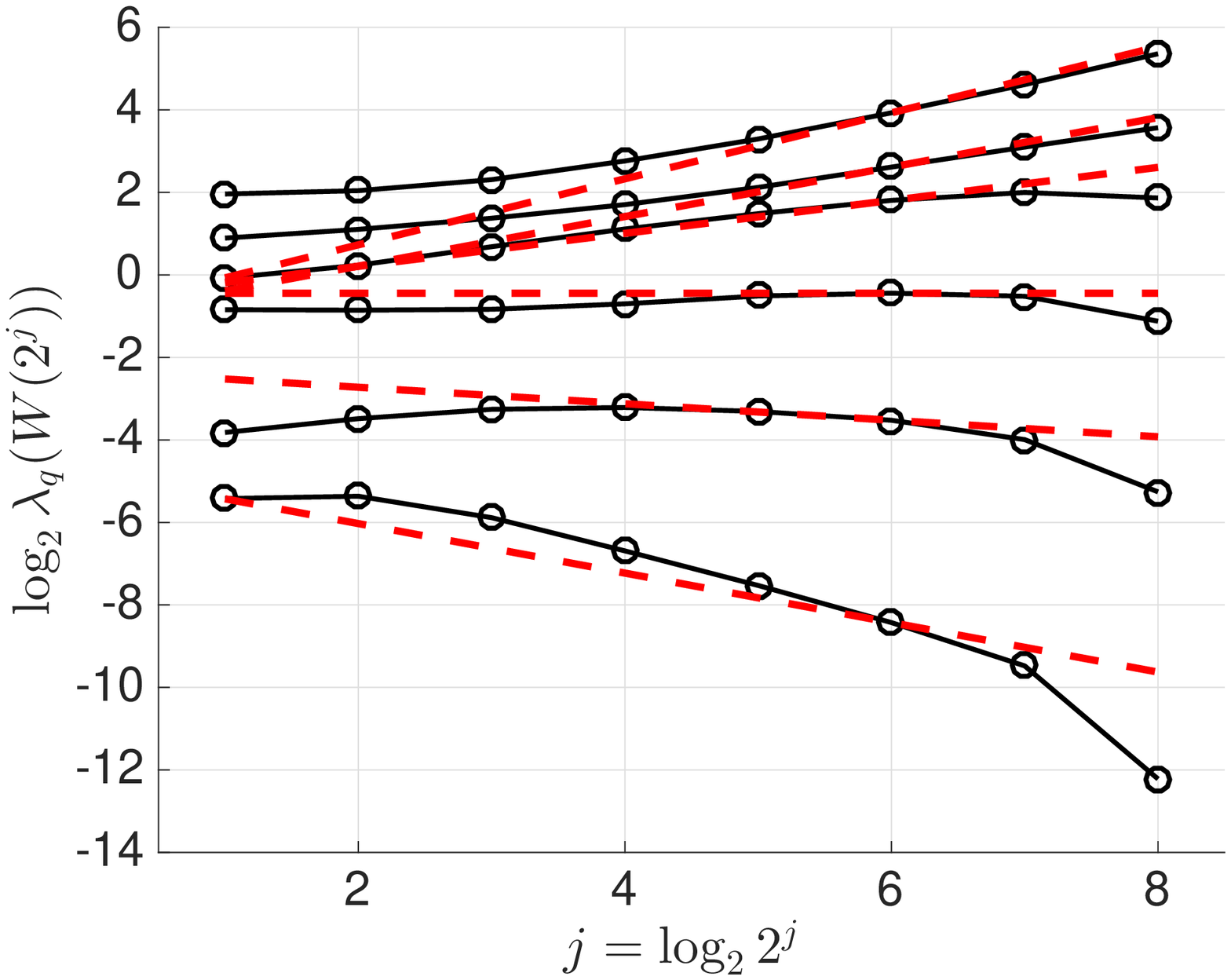}
\includegraphics[width=0.5\linewidth]{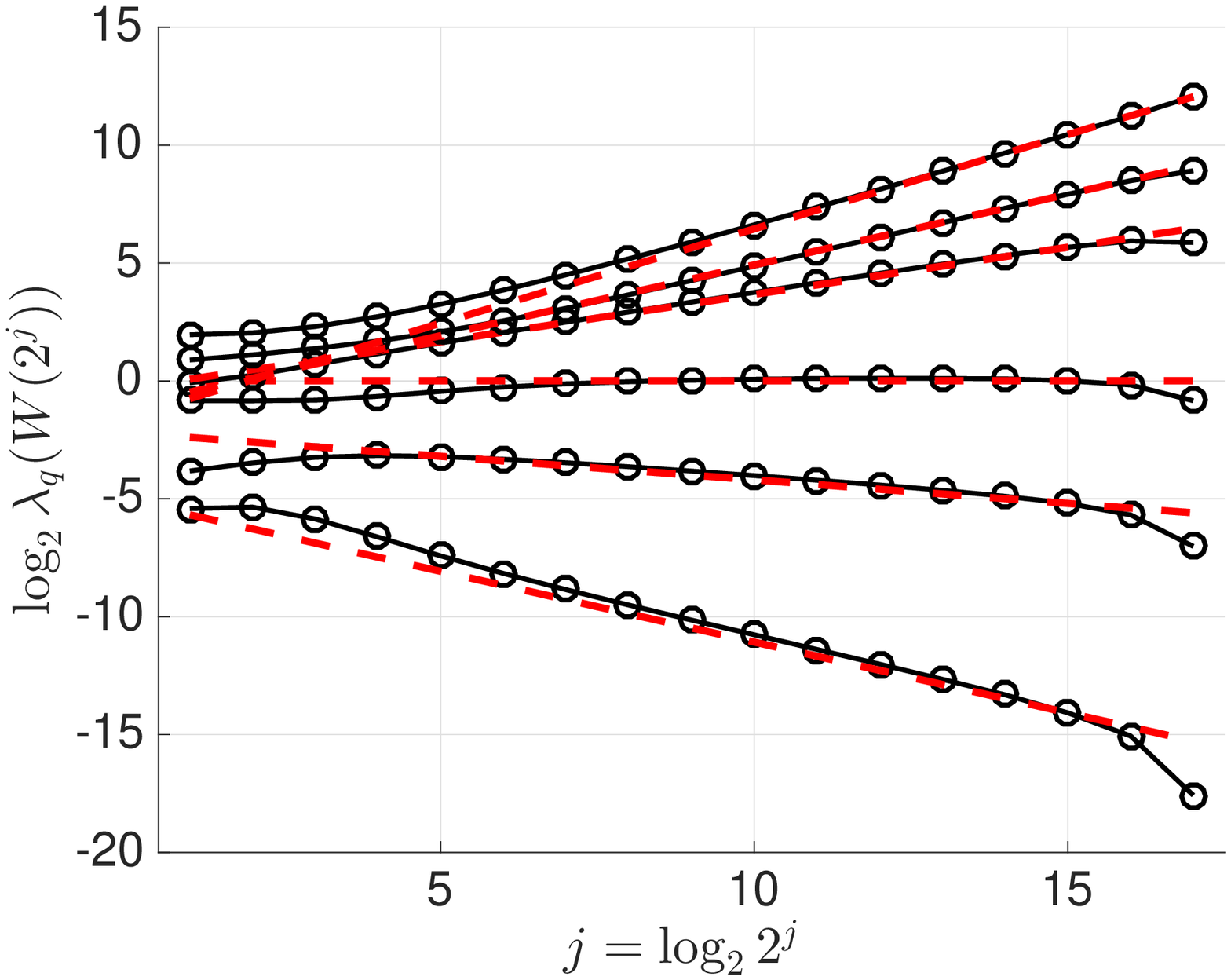}
}
\centerline{
\includegraphics[width=0.5\linewidth]{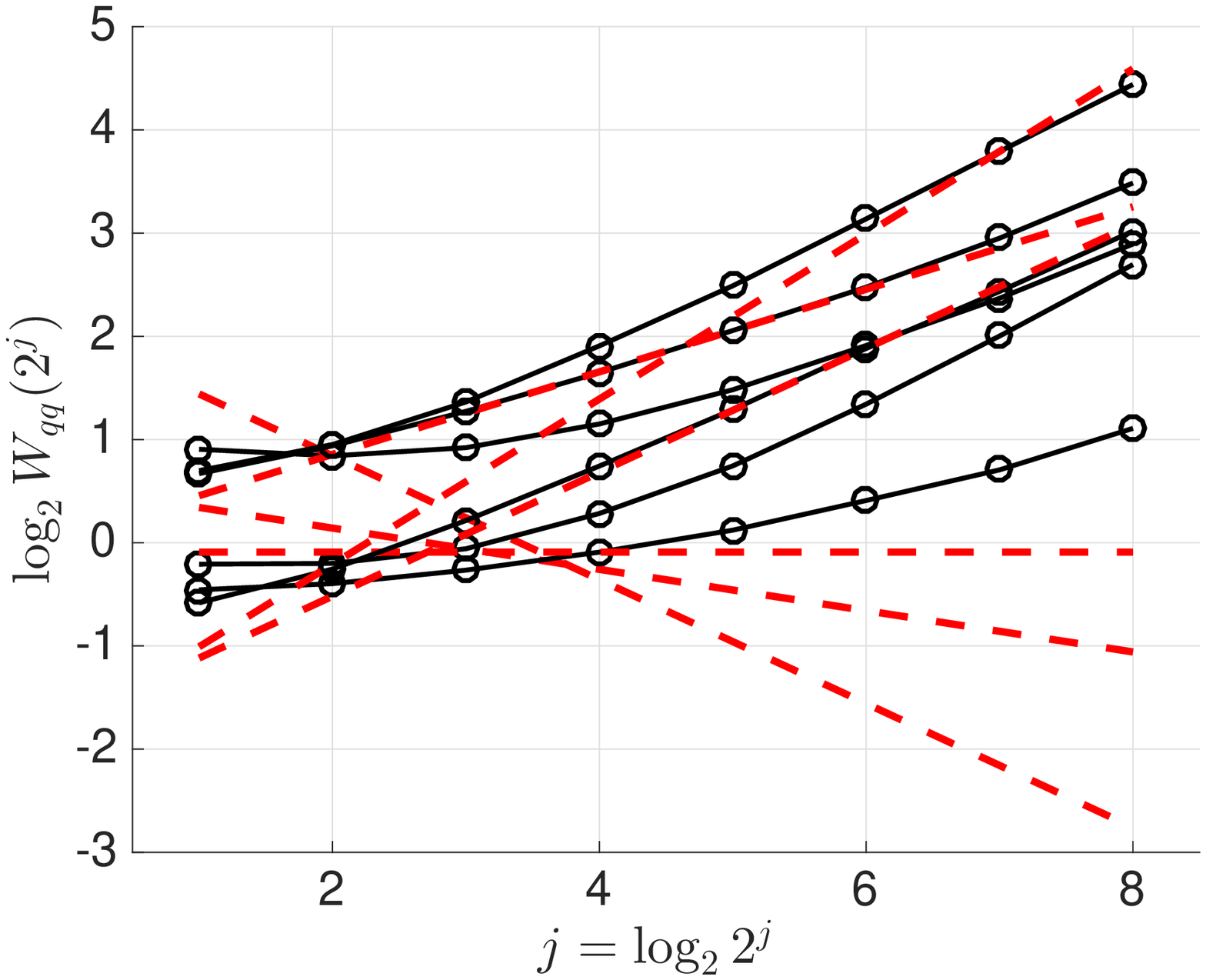}
\includegraphics[width=0.5\linewidth]{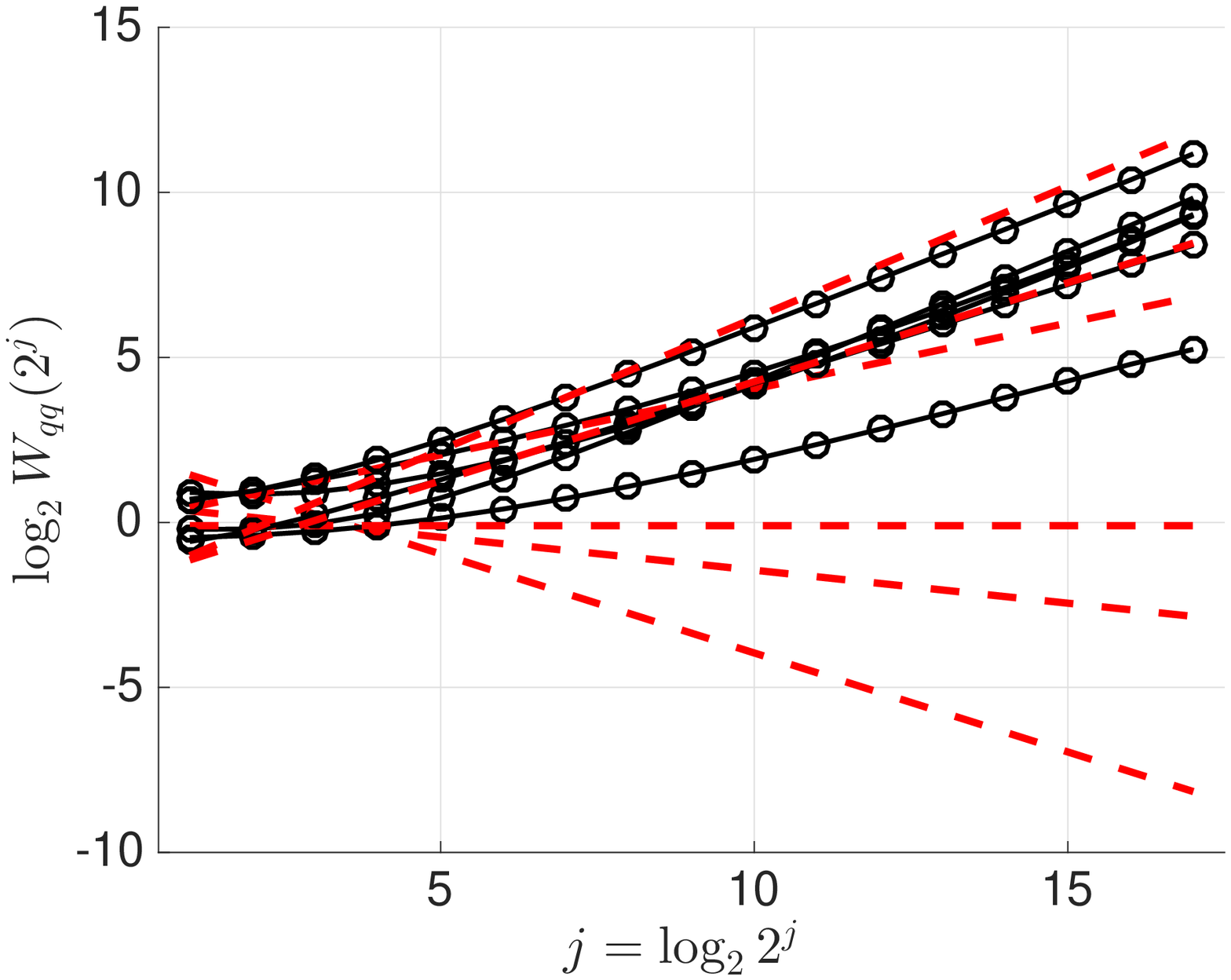}
}
\caption{\label{figa} {\bf Logscale diagrams: univariate-like vs multivariate analysis.} Superimposition of the Monte Carlo averages $\log_2  \langle \lambda_q(W(2^j)) \rangle_{1000} $ (top plots) and $\log_2  \langle W(2^j)_{qq} \rangle_{1000} $ (bottom plots), with the theoretical asymptotic trends  $c_q + 2h_q  \times j$ (dashed red lines), $q = 1,\hdots,6$, for two different sample sizes (left, $\nu= 2^{10}$; right $\nu=2^{20}$). Univariate-like analysis fails to capture the theoretical asymptotic trend and replicates, for all $q$, the trend for the largest Hurst eigenvalue $c_6 + 2 h_{6}  \times j$. Multivariate analysis captures the correct theoretical asymptotic trend $c_q + 2h_q  \times j$ for each $q$.
}
\end{figure}

\section{Monte Carlo studies}\label{s:mc}

\noindent {\bf Numerical experiment setting.} To study the performance of the estimator \eqref{e:hl-hat}, broad Monte Carlo experiments were conducted for sample sizes in the range $ \nu = 2^{10}, \hdots, 2^{20}$, with 1,000 independent OFBM sample paths for each of the latter.
The synthesis of OFBM was performed using the multivariate toolbox devised in Helgason et al.\ \cite{Helgason_H_2011_j-sp_fessmgtsuce,Helgason_H_2011_j-sp_smsspmdccme} and available at \texttt{www.hermir.org}.
We opted for showing results in dimension $n=6$ as representative of the general multivariate situation $n \geq 2$, while keeping the number of plots reasonable.
Results are reported for a single representative instance of OFBM with Hurst eigenvalues
\begin{equation}\label{e:h1=<...=<h6}
h_1 = 0.3, \quad h_2 = 0.4, \quad h_3 = 0.5, \quad h_4 = 0.7, \quad h_5 = 0.8, \quad h_6 = 0.9,
\end{equation}
and Hurst eigenvector matrix
\begin{equation}
P = \left(\begin{array}{cccccc}
0.6468  &  0.3846   & 0.4436 &  -0.5175    &     0  &  0.4000 \\
-0.3234  &  0.7692  & -0.5070   &   0   &  0.1387  &  0.4667 \\
0.1941  &  -0.1538  &  0.6337  & -0.3696  & -0.1387   &      0 \\
   -0.2587  &  0.4615  &  0.3802  & 0.7392 &  -0.4160  &  0.4000 \\
    0.3234   &      0     &    0  &       0  &  0.6934  & -0.1333  \\
    0.5175   & 0.1538     &    0  &  -0.2218  &  0.5547  &  0.6667 \\
\end{array}\right),
\end{equation}
since similar conclusions can be drawn from several other instances.

The analysis was conducted using orthogonal least asymmetric Daubechies wavelets, with $N_\psi = 2$ vanishing moments.
It has been checked that varying $N_\psi \geq 2$ or using other regular enough wavelets yields qualitatively identical conclusions.
The log-linear regressions \eqref{e:hl-hat} were performed across scales $(j_1,j_2) = (6, \log_2 \nu -N_\psi)$ using weights
$$
w_{j} = b_{j} \frac{V_0 \hspace{0.5mm}j - V_1}{V_0 V_2 - V^2_1}, \quad j = j_1,\hdots,j_2, \quad \quad V_p := \sum^{j_2}_{j=j_1} j^p b_{j} , \quad p =0,1,2,
$$
which satisfy \eqref{e:sum_wj=0,sum_jwj=1}.
The scalars $b_{j} \geq 0$ can be freely chosen and reflect the degree of confidence in each term $\log_2 \lambda_{q}( 2^j)$.
Following Abry et al.\ \cite{abfrv:2002}, we picked $b_j= \nu /2^j$.
We compare the estimation performance to that of the univariate-like analysis of each component separately, i.e., of the log-linear regressions
$$
\widehat{h}^{U}_{q} := \frac{1}{2}\sum_{j=j_1}^{j_2} w_j \log_2  W(2^j)_{qq}, \quad q =1, \ldots, n,
$$
based on the main diagonal entries of $W(2^j)$ (see, for instance, Veitch and Abry \cite{veitch:abry:1999} and Ciuciu et al.\ \cite{ciuciu:abry:he:2014}).\\

\noindent {\bf Estimation principle.} To illustrate the estimation procedure, for each $q = 1,\hdots,6$ and for the smallest $ \nu = 2^{10} $ and largest $\nu = 2^{20}$ sample sizes, Figure~\ref{figa} compares the multivariate and univariate-like wavelet analysis functions $\log_2  \langle \lambda_{q}(W(2^j)) \rangle_{1000} $ (top plots) and $\log_2  \langle W(2^j)_{qq} \rangle_{1000} $, respectively. The symbol $\langle \cdot \rangle_{1000}$ denotes the Monte Carlo average, used as a numeric surrogate for the ensemble average $\bbE \cdot$.

Figure~\ref{figa} clearly shows that, for each $q$, the Monte Carlo averaged univariate-like analysis functions $\log_2  \langle W(2^j)_{qq} \rangle_{1000} $ fail to reproduce the theoretical asymptotic behavior $c_q+  2h_{q}  \times j$ (dashed red lines) and essentially follow the dominant asymptotic behavior $c_6+  2h_{6}  \times j$. This leads to the incorrect conclusion that the 6 components have the same Hurst eigenvalue $h_6$. By contrast, Figure~\ref{figa} shows that the Monte Carlo averaged multivariate analysis functions $\log_2  \langle \lambda_{q}(W(2^j)) \rangle_{1000} $, $q = 1,\hdots,6$, closely follow the theoretical asymptotic behavior $c_q+  2h_{q}  \times j$. This provides evidence of the existence of different Hurst eigenvalues in the multivariate data.
Interestingly, the agreement of observed and theoretical scaling remains very satisfactory even for small sample sizes (in this case, $\nu= 2^{10}$!).\\ 

\begin{figure*}[th]
\centerline{
\includegraphics[width=0.3\linewidth]{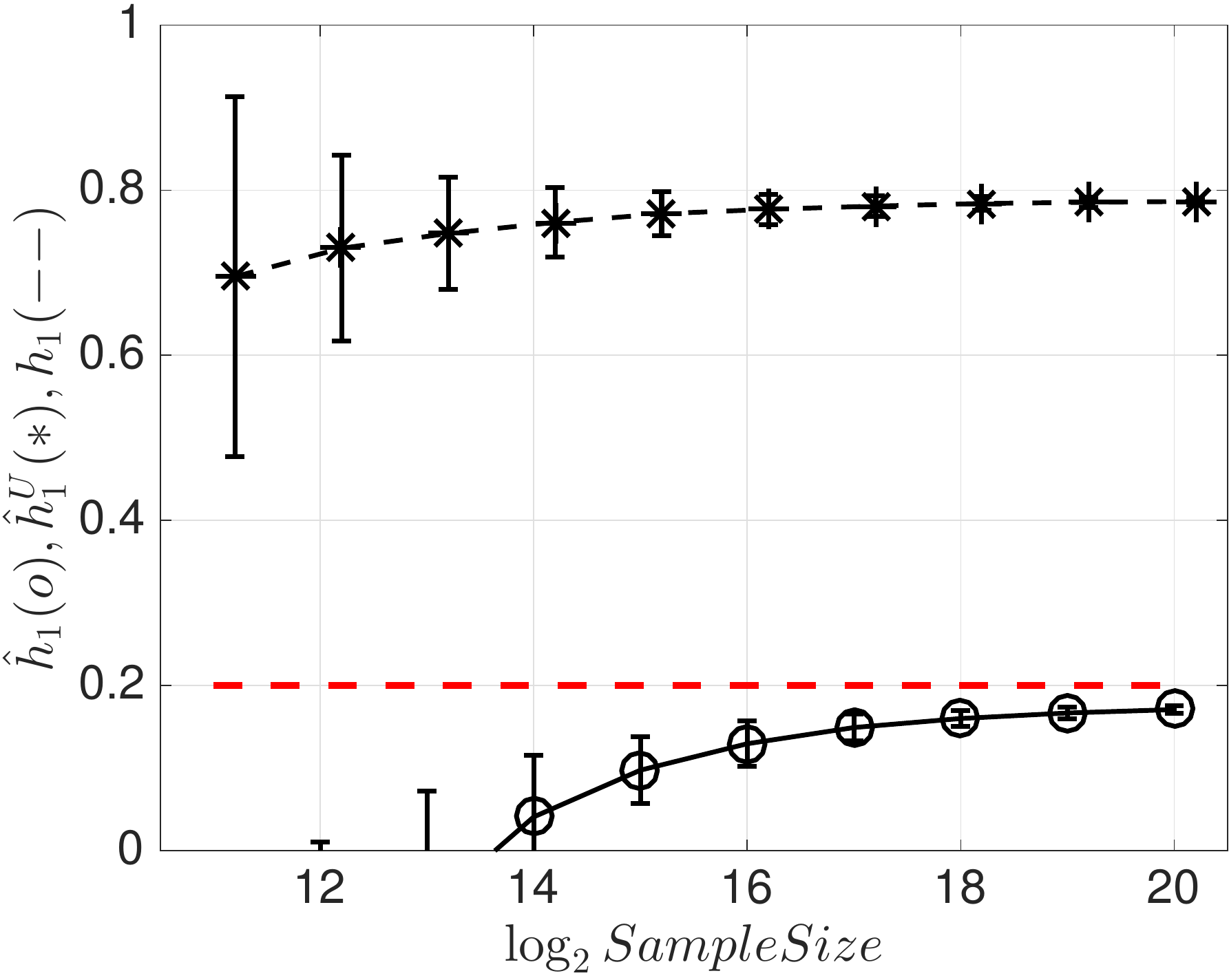}
\includegraphics[width=0.3\linewidth]{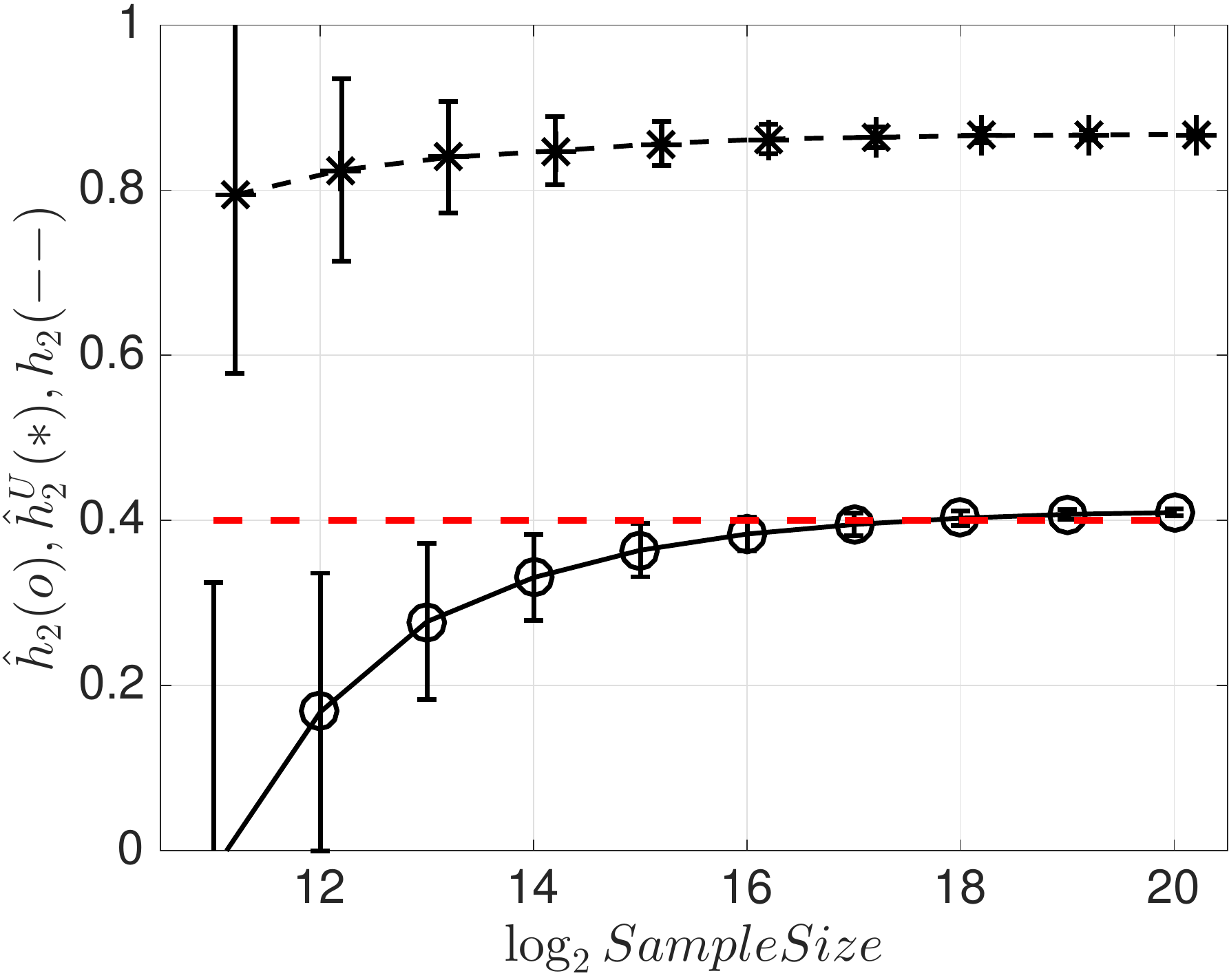}
\includegraphics[width=0.3\linewidth]{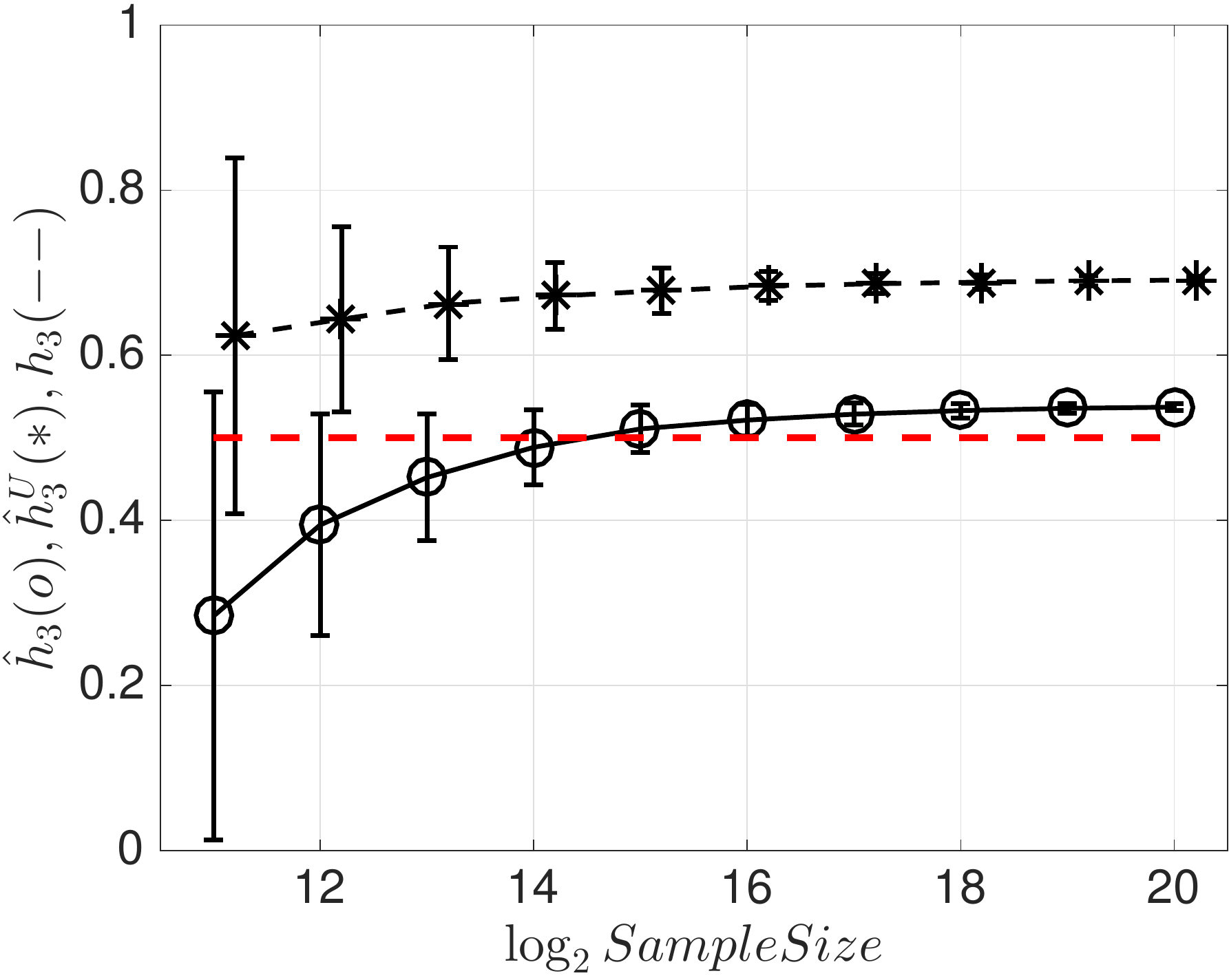}
}
\centerline{
\includegraphics[width=0.3\linewidth]{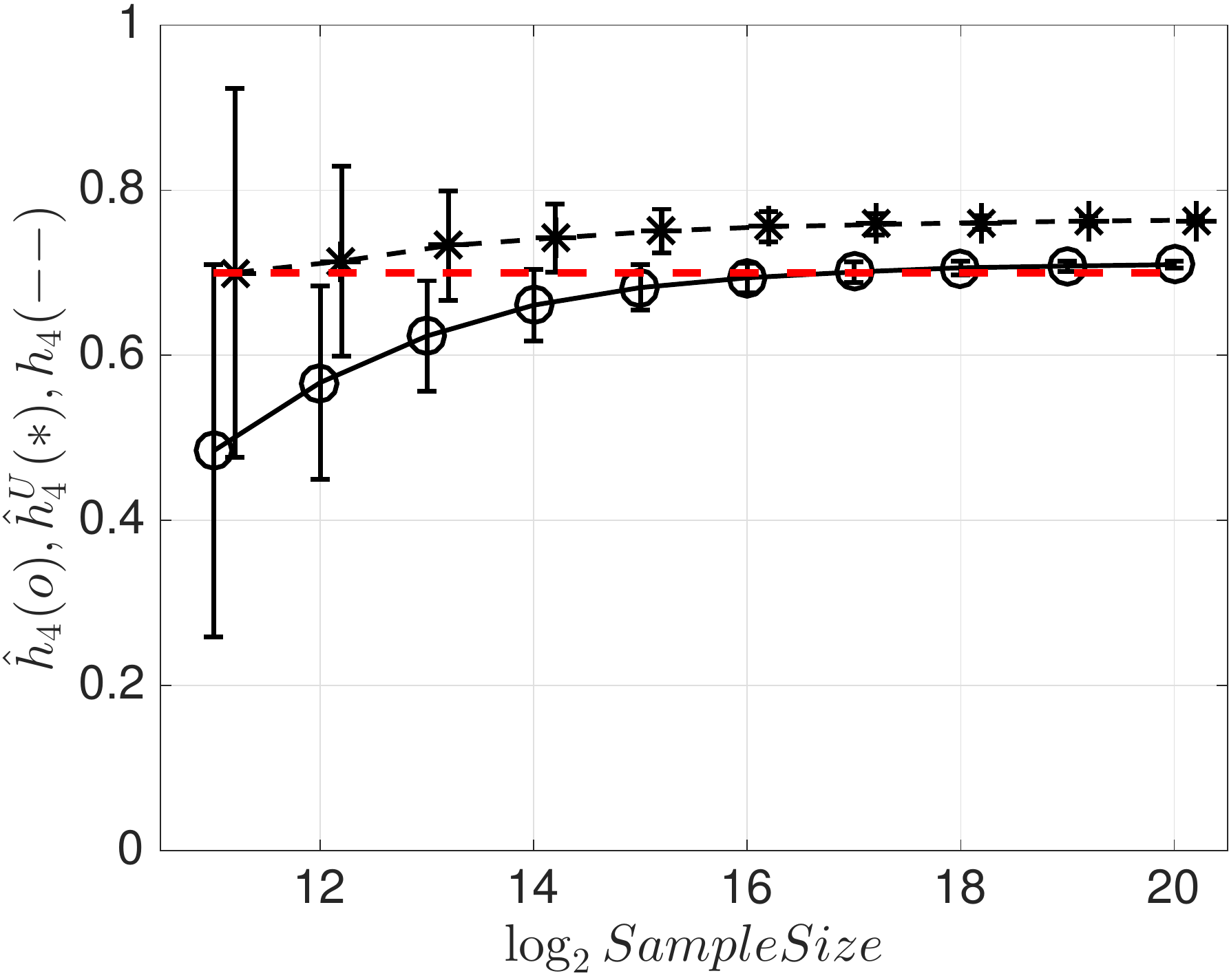}
\includegraphics[width=0.3\linewidth]{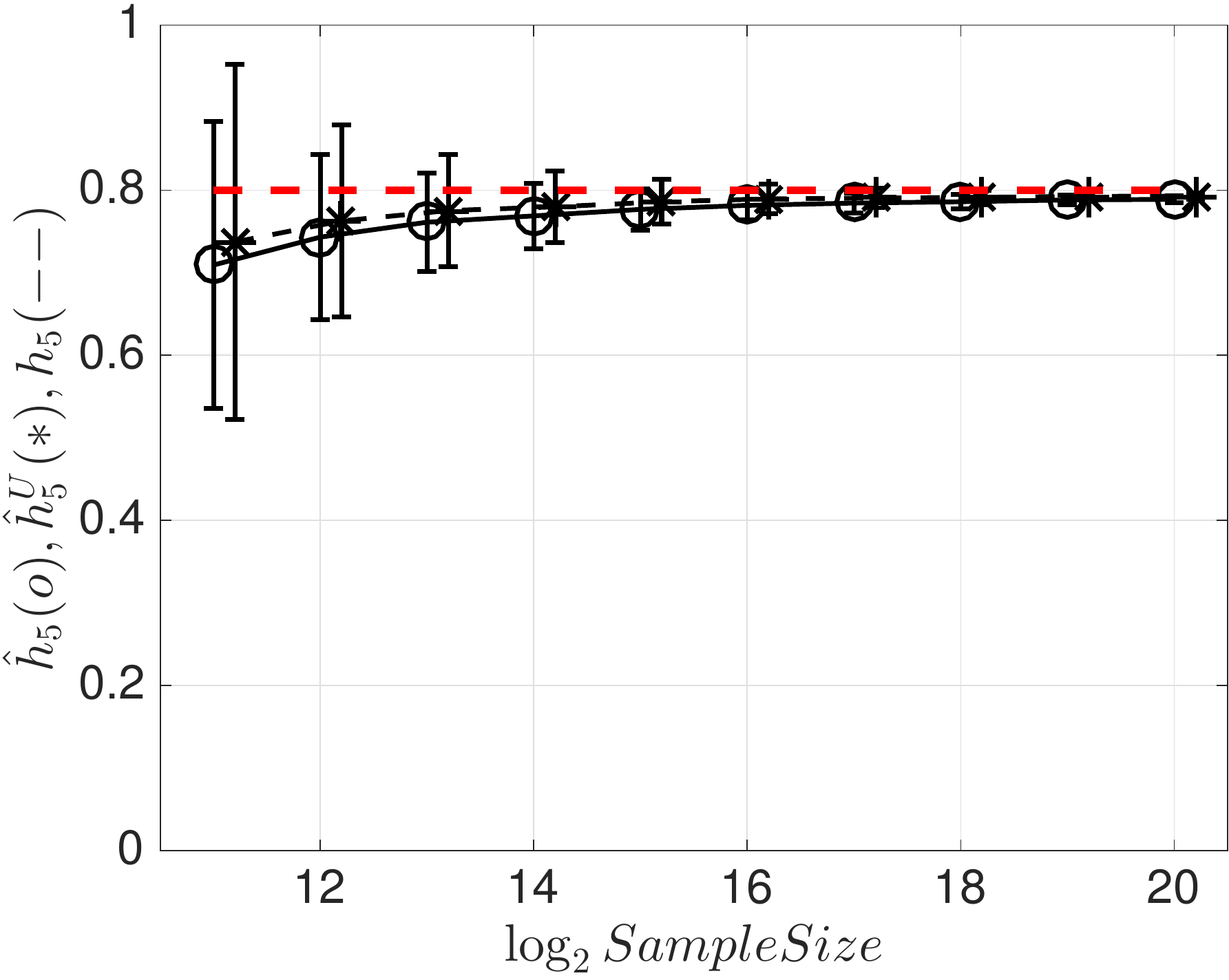}
\includegraphics[width=0.3\linewidth]{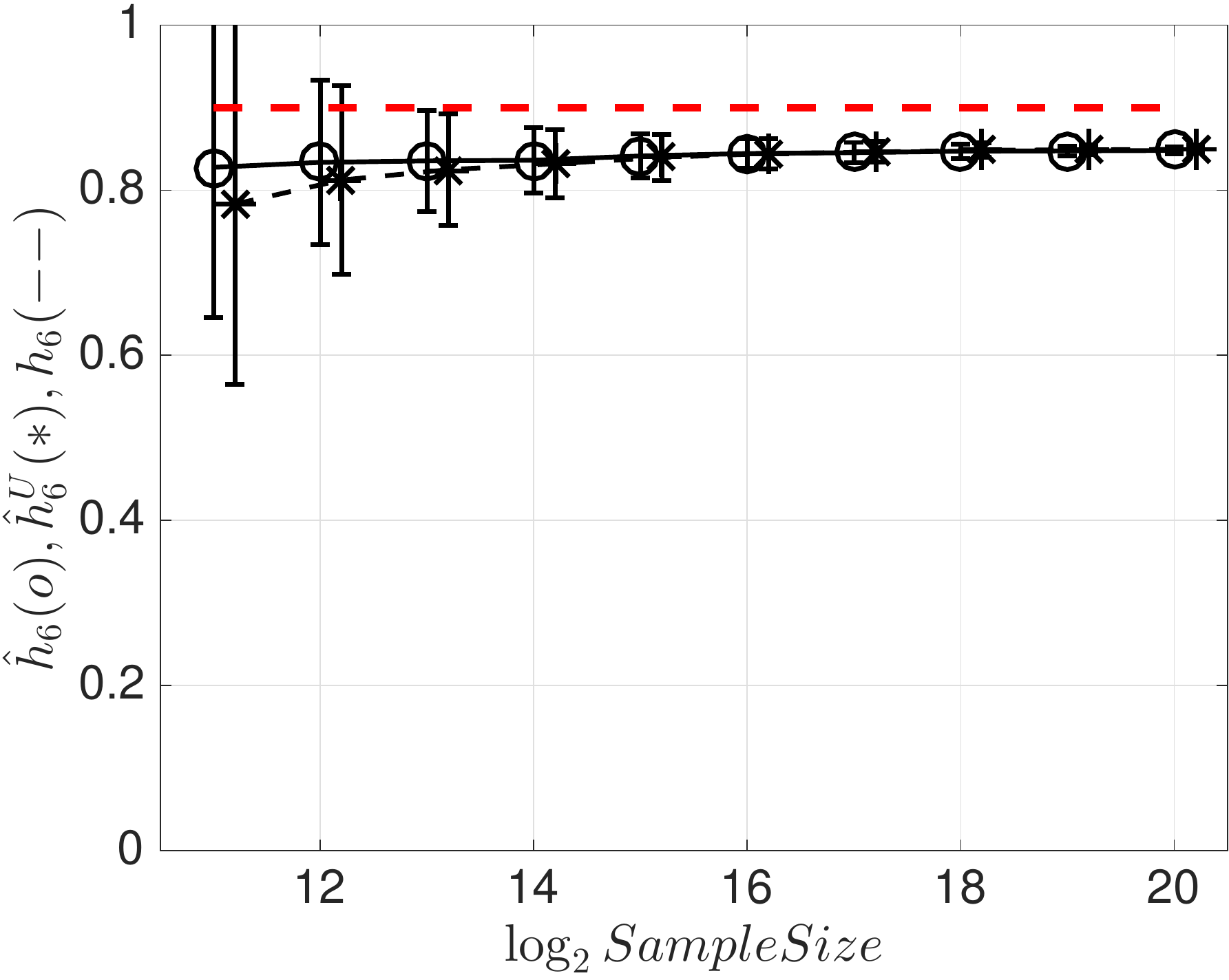}
}
\caption{\label{figb} {\bf Estimation performance (bias).} Bias of each estimator $ \widehat{h}_{q}$ and $ \widehat{h}^{U}_{q}$ as a function of the ($\log_2$ of the) sample size, for $\widehat{h}_q$, $q= 1,\ldots,6$. The horizontal red dashed line indicates the true $h_q$,
the black solid lines with `$o$' represent the Monte Carlo estimate of $\bbE \widehat{h}_q$, the dashed black lines with $\ast$ represent the Monte Carlo based univariate-like estimation  $\widehat{h}^U_q$ of $h_q$ (bootstrapped confidence intervals).
 }
\end{figure*}

\begin{figure*}[th]
\centerline{
\includegraphics[width=0.3\linewidth]{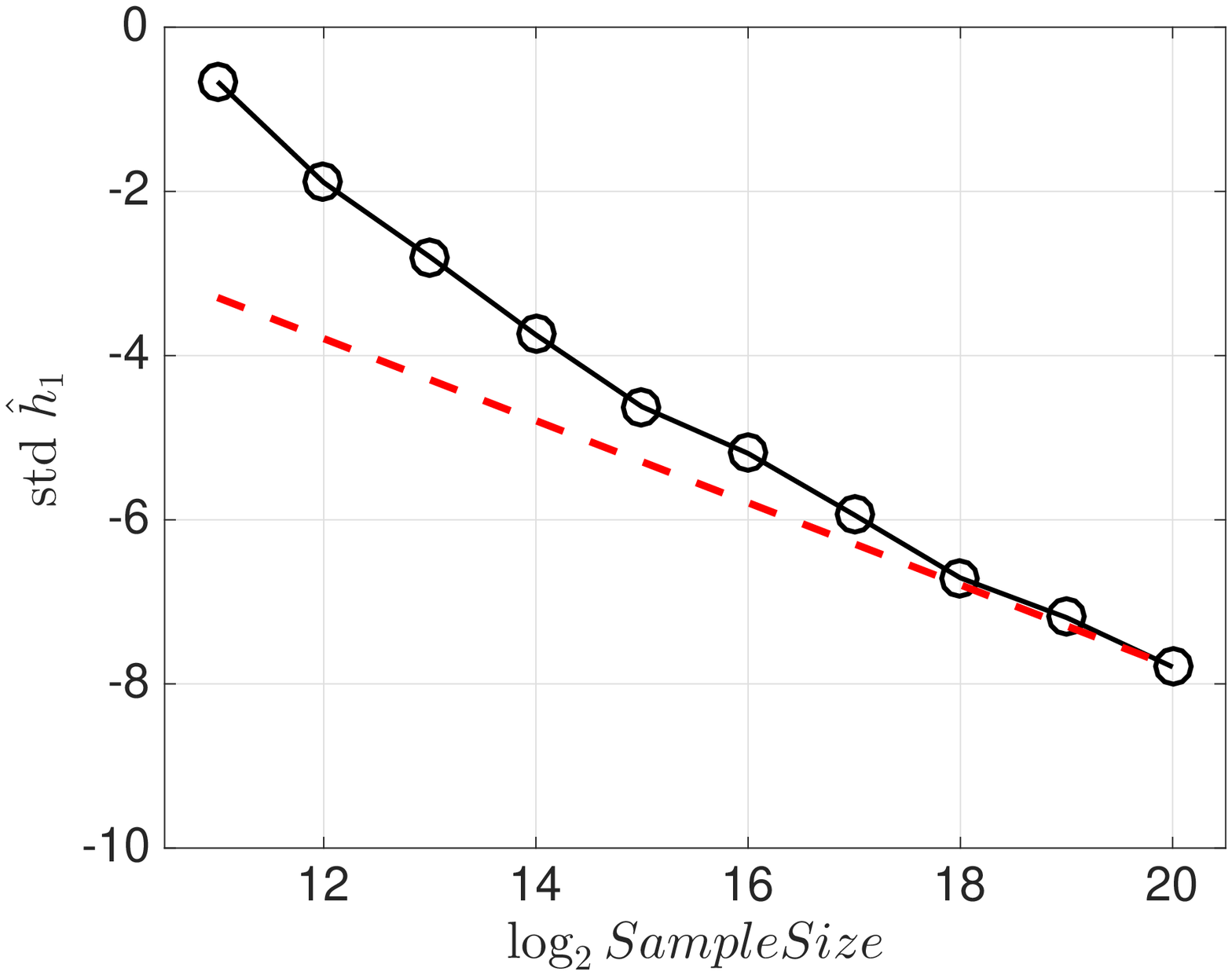}
\includegraphics[width=0.3\linewidth]{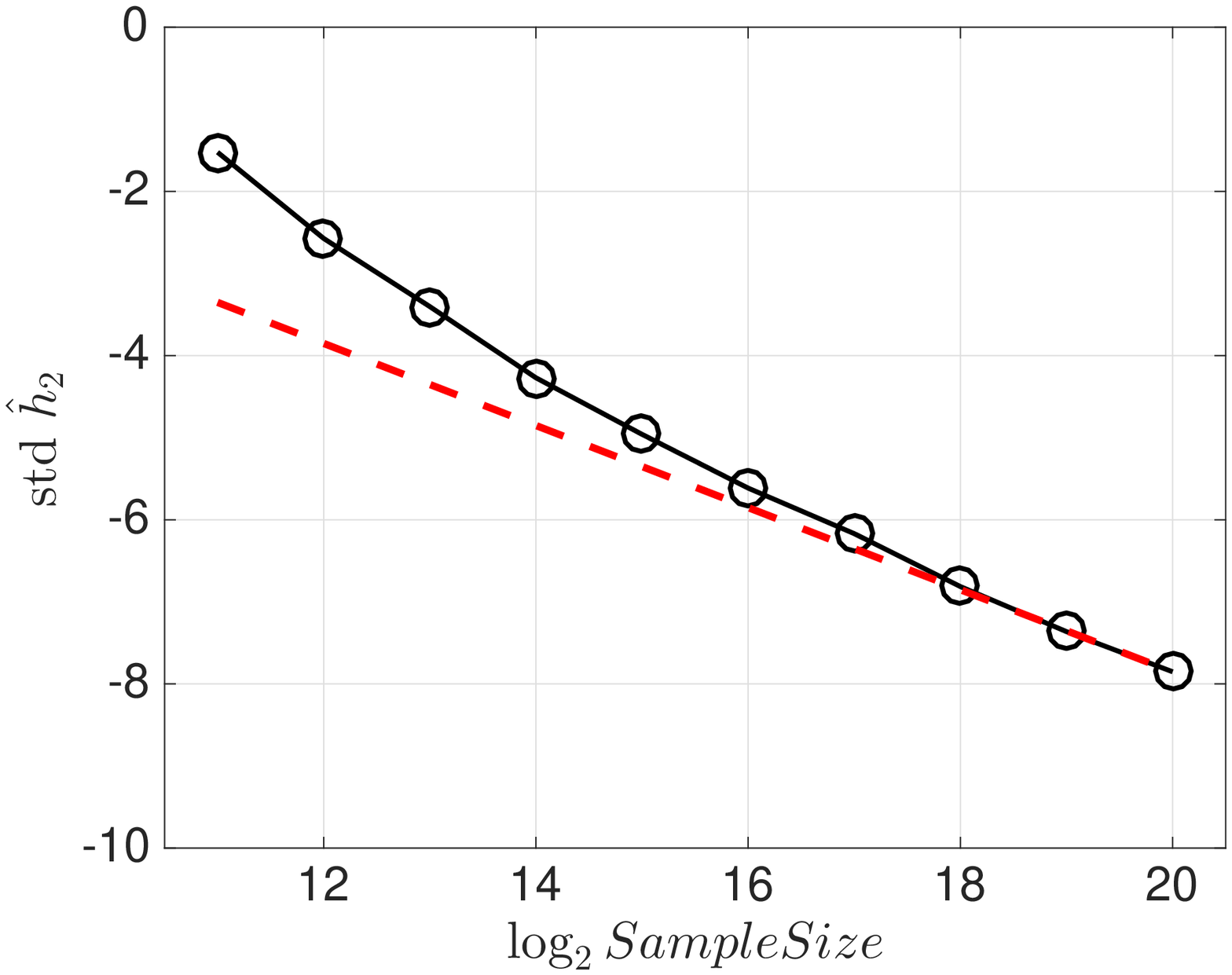}
\includegraphics[width=0.3\linewidth]{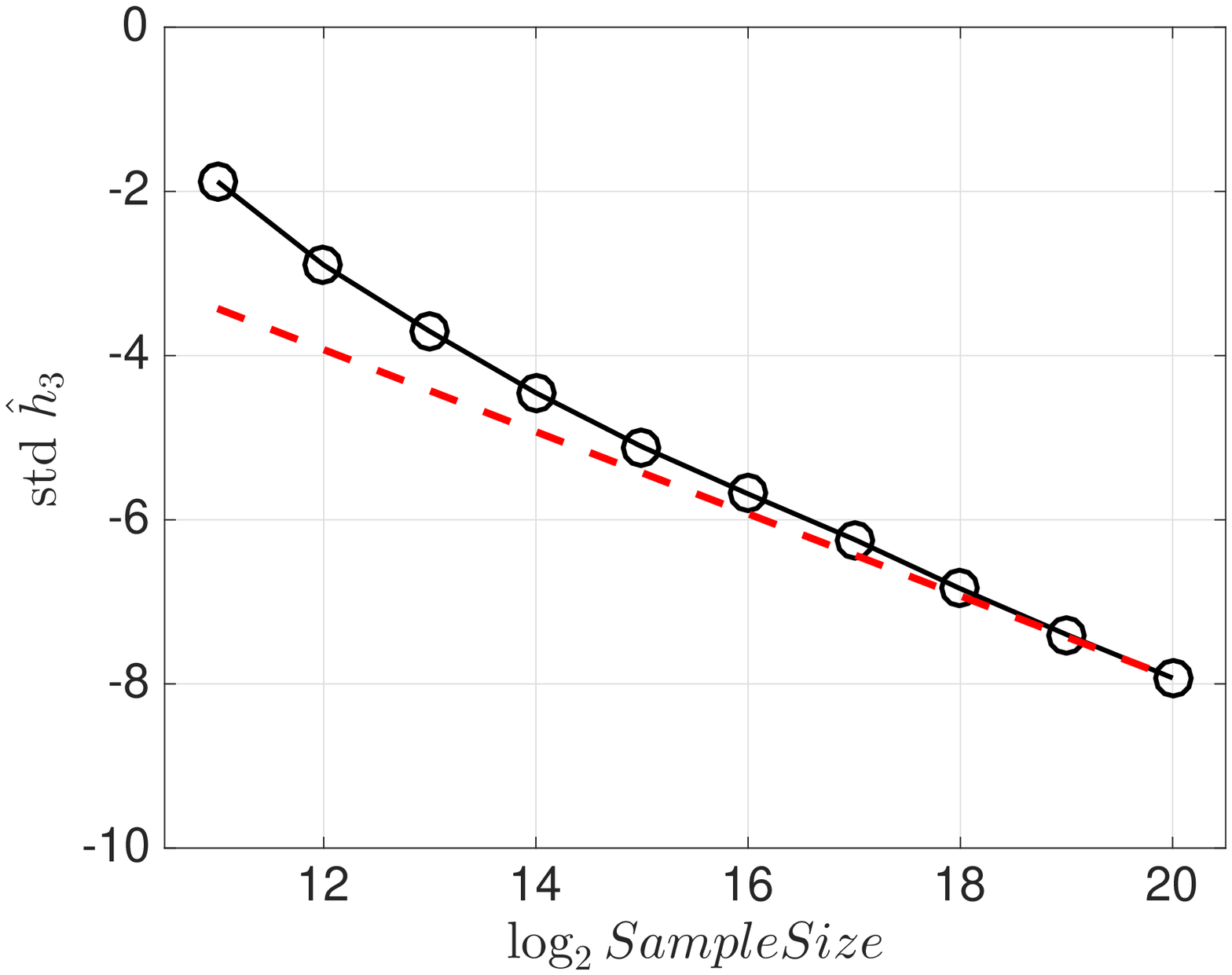}}
\centerline{
\includegraphics[width=0.3\linewidth]{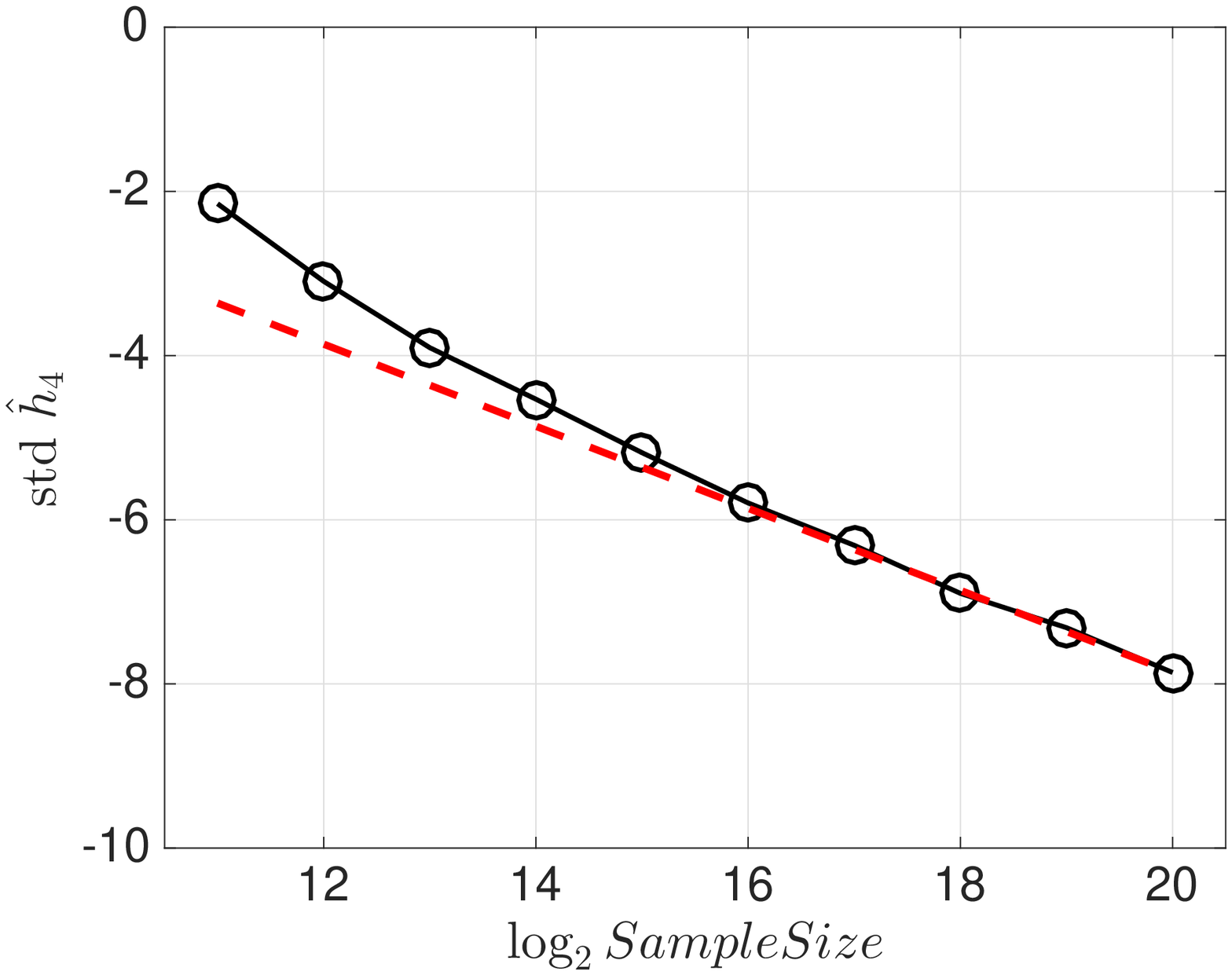}
\includegraphics[width=0.3\linewidth]{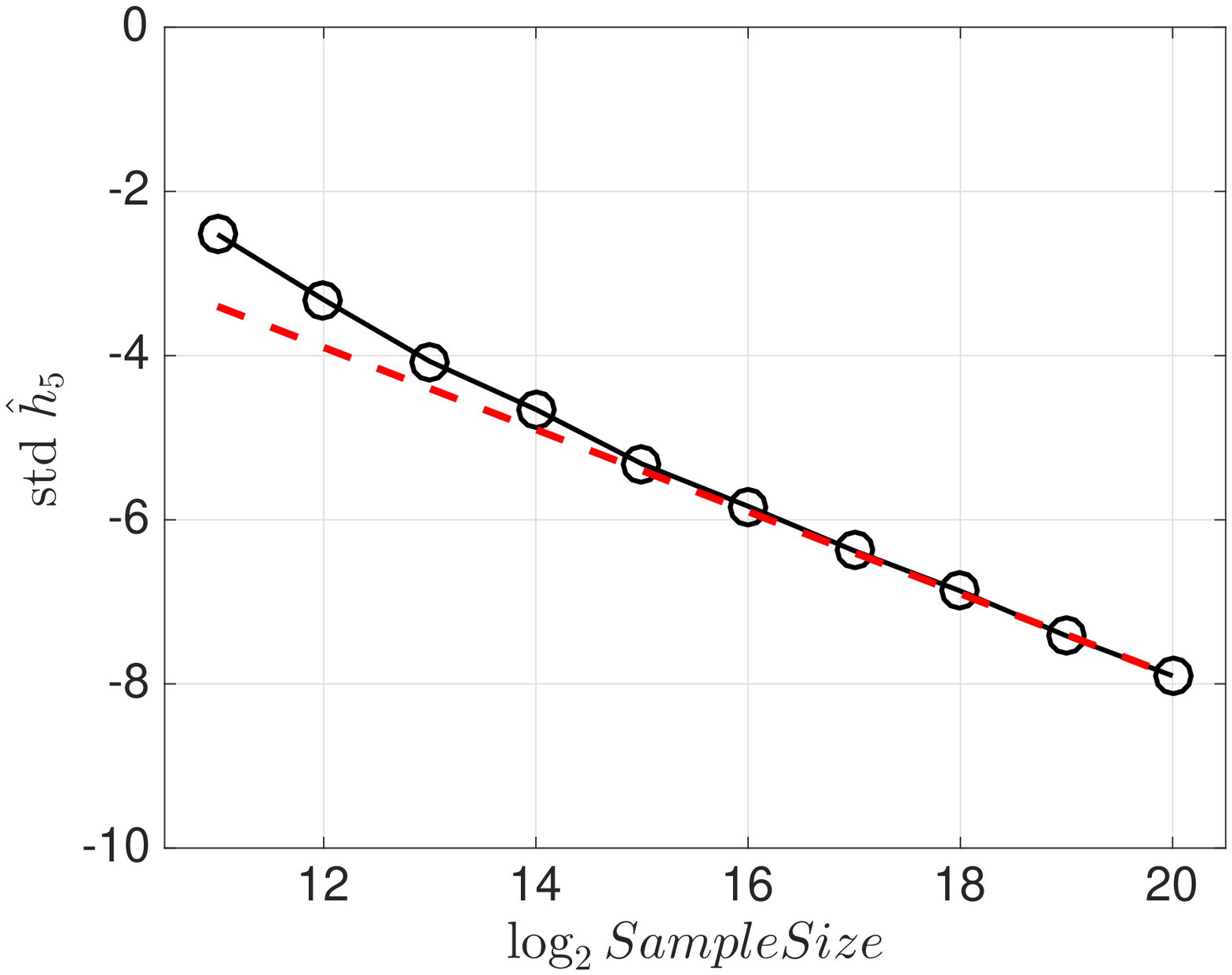}
\includegraphics[width=0.3\linewidth]{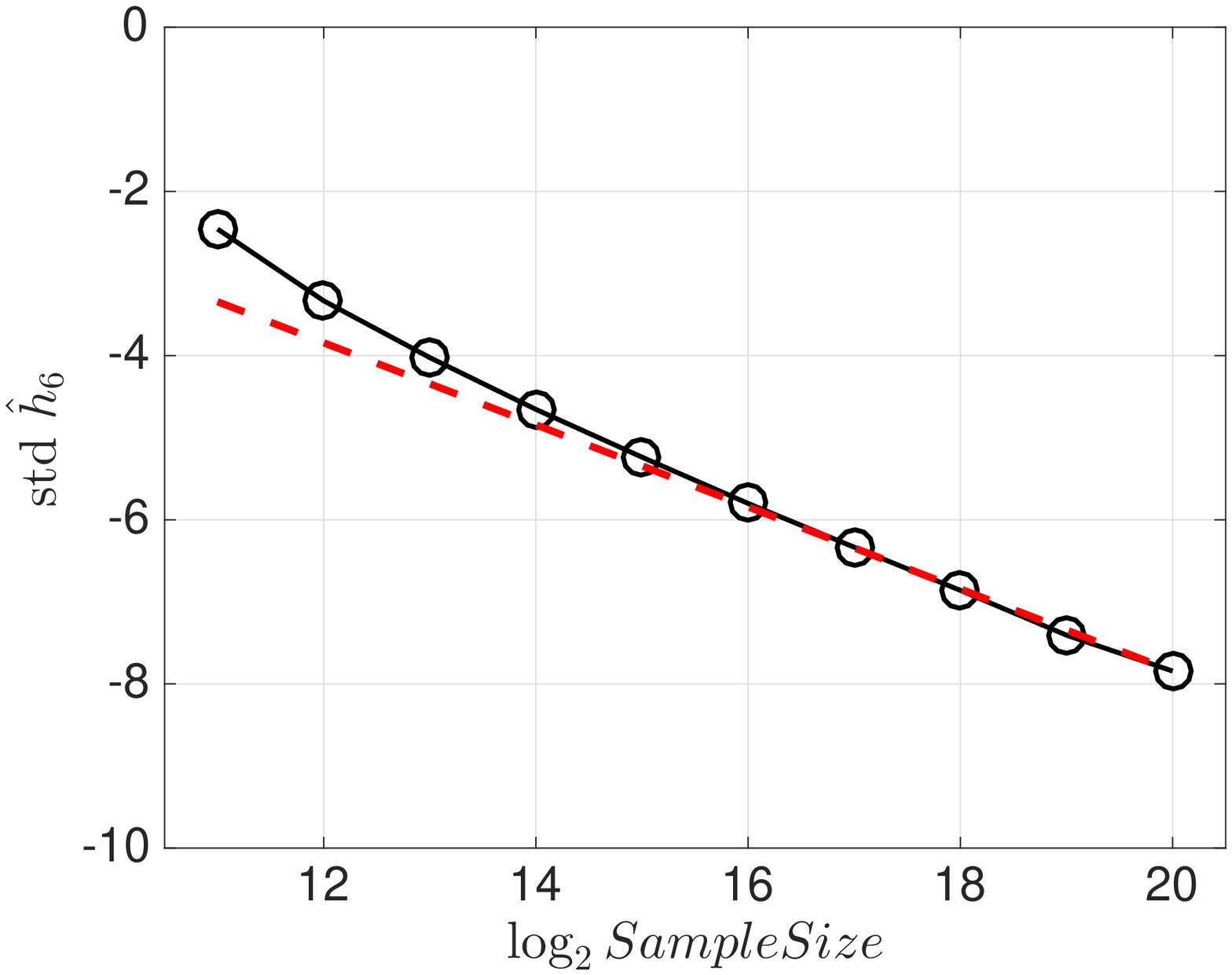}
}
\caption{\label{figbb} {\bf Estimation performance (standard deviation).} ($\log_2$ of ) Standard deviations as functions of ($\log_2$ of the) sample size, for $\widehat{h}_q$, $q= 1,\ldots,6$.
The red dashed line indicates the expected $\nu^{-1/2}$ decrease.
 }
\end{figure*}

\noindent {\bf Bias and standard deviation.} To further assess the estimation performance, in Figure \ref{figb} biases for $ \widehat{h}_{q}$ and $ \widehat{h}^{U}_{q}$, $q = 1,\hdots,6$, are compared as functions of (the $\log_2$ of) the sample size.
The results confirm that the univariate-like estimates $ \widehat{h}^{U}_{q}$ (dashed black lines with $\ast$) are strongly biased, barely departing from the largest Hurst eigenvalue $h_6$. In other words, under an OFBM model, univariate-like data analysis leads practitioners to incorrectly conclude that all components have the same Hurst eigenvalue, i.e., $\widehat{h}^{U}_{q} \simeq h_{q}$, $q=1,\ldots,n$. 

Moreover, biases for the wavelet eigenstructure estimators $ \widehat{h}_{q}$  decrease with sample size for all $q$, as predicted by Theorem \ref{t:asympt_normality_lambda2}. Unsurprisingly, the simulations further show that the accurate estimation of the smaller Hurst eigenvalues is more demanding in terms of data by comparison to larger Hurst eigenvalues. While the estimation of $h_6$ shows negligible bias for a sample size as small as $\nu=2^{10}$, equally accurate estimation of $h_1$ requires $\nu=2^{16}$.

Figure \ref{figbb} further shows that Monte Carlo standard deviations for $\widehat{h}_{q}$ decay as $\nu^{-1/2}$. Interestingly, the amplitude of standard deviations depends neither on each individual value $h_{q}$ nor, globally, on the ensemble of parameters \eqref{e:h1=<...=<h6}. These results constitute two very remarkable features of the proposed estimation procedure, which is strongly reminiscent of what was observed in univariate estimation for FBM (see Veitch and Abry \cite{veitch:abry:1999}).

In addition, Monte Carlo experiments not reported indicate that, surprisingly, biases and standard deviations neither depend (significantly) on the off-diagonal entries of the instantaneous covariance $\bbE B_H(1)B_H(1)^*$ (i.e., on correlations among pre-mixed components), nor on the choice of the Hurst eigenvector matrix $P$. This is another striking feature of the performance of the estimators \eqref{e:hl-hat}.\\

\begin{figure}[th]

\centerline{
\includegraphics[width=0.4\linewidth]{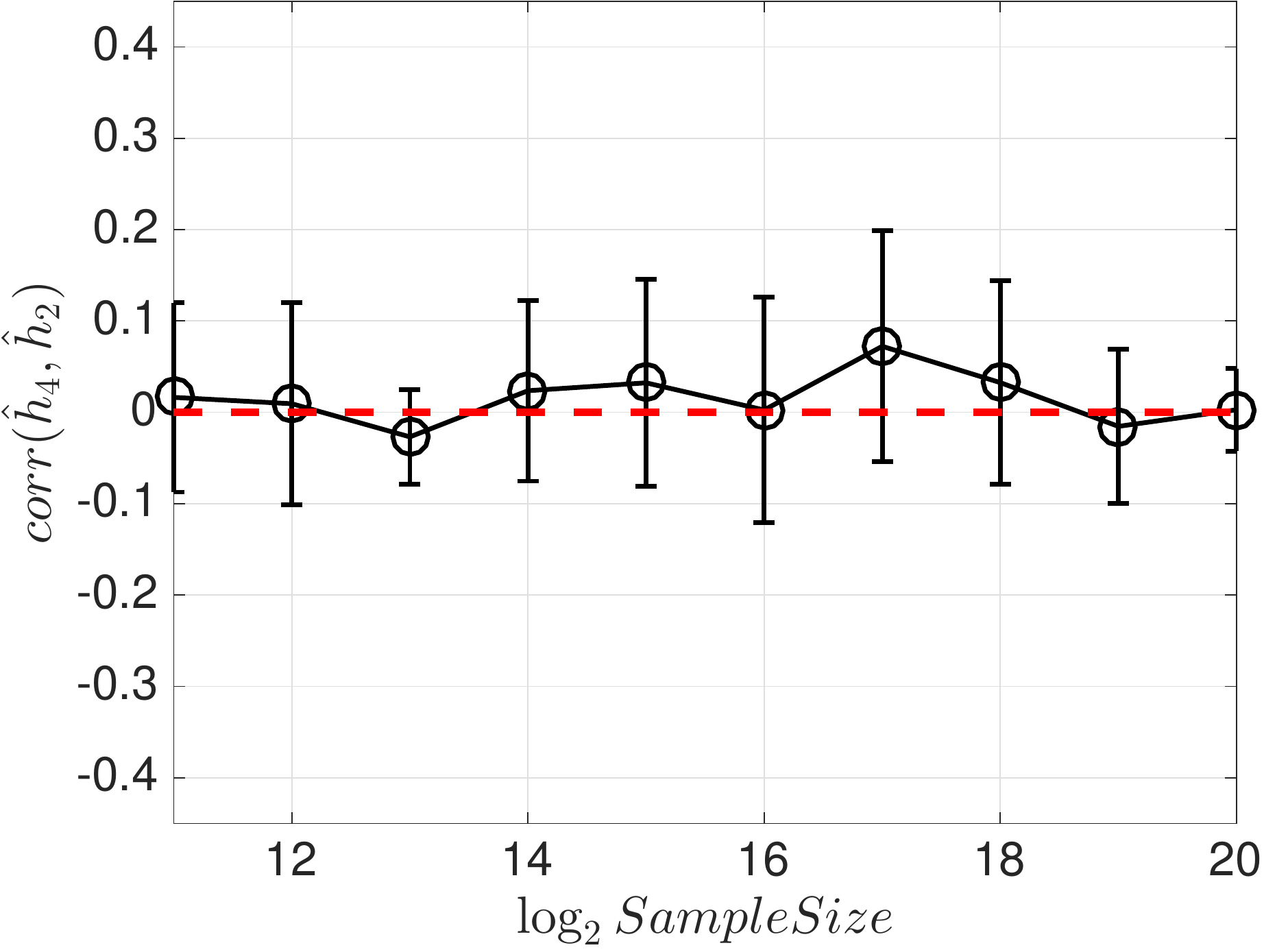}
\includegraphics[width=0.4\linewidth]{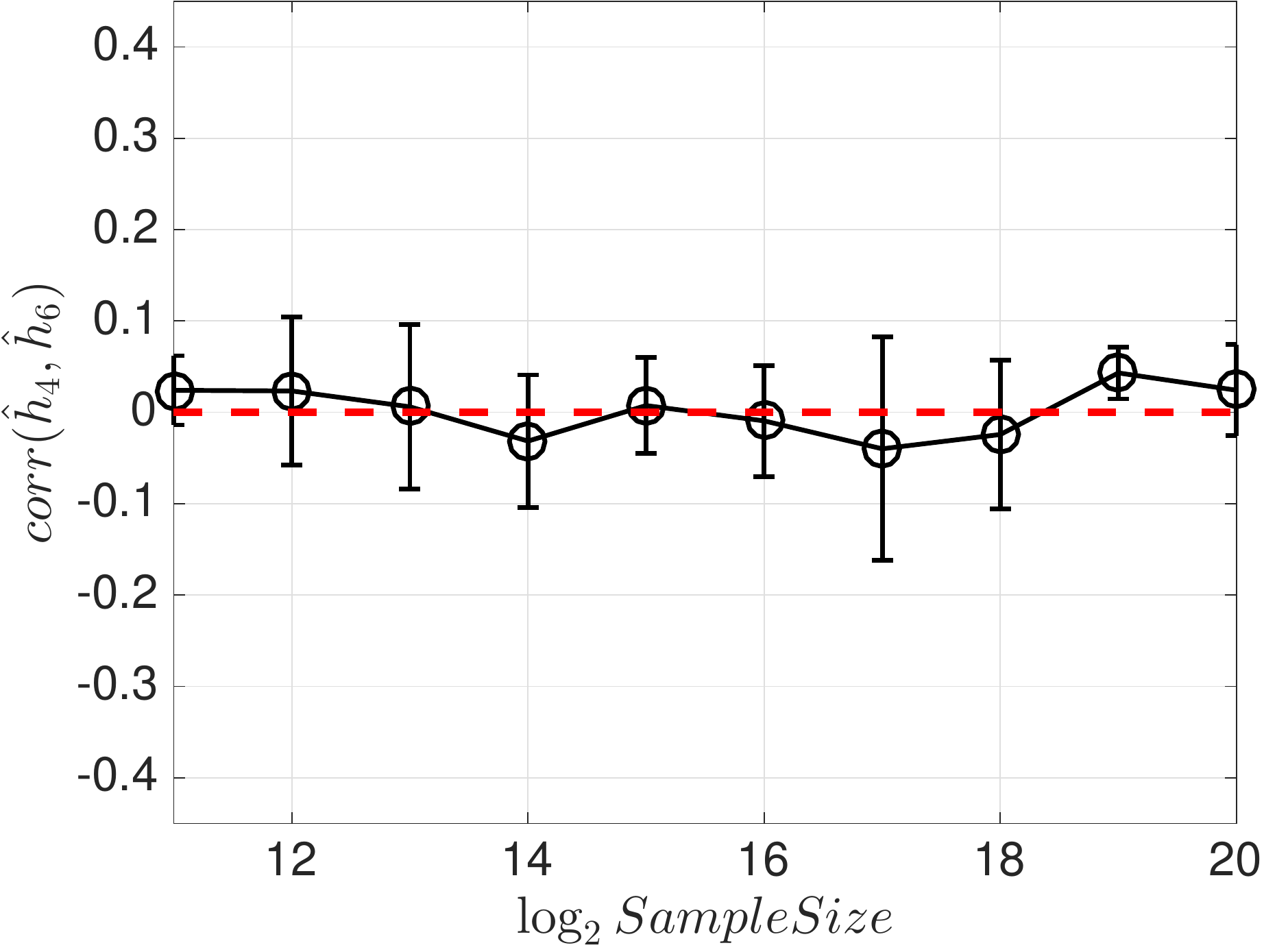}
}
\centerline{
\includegraphics[width=0.4\linewidth]{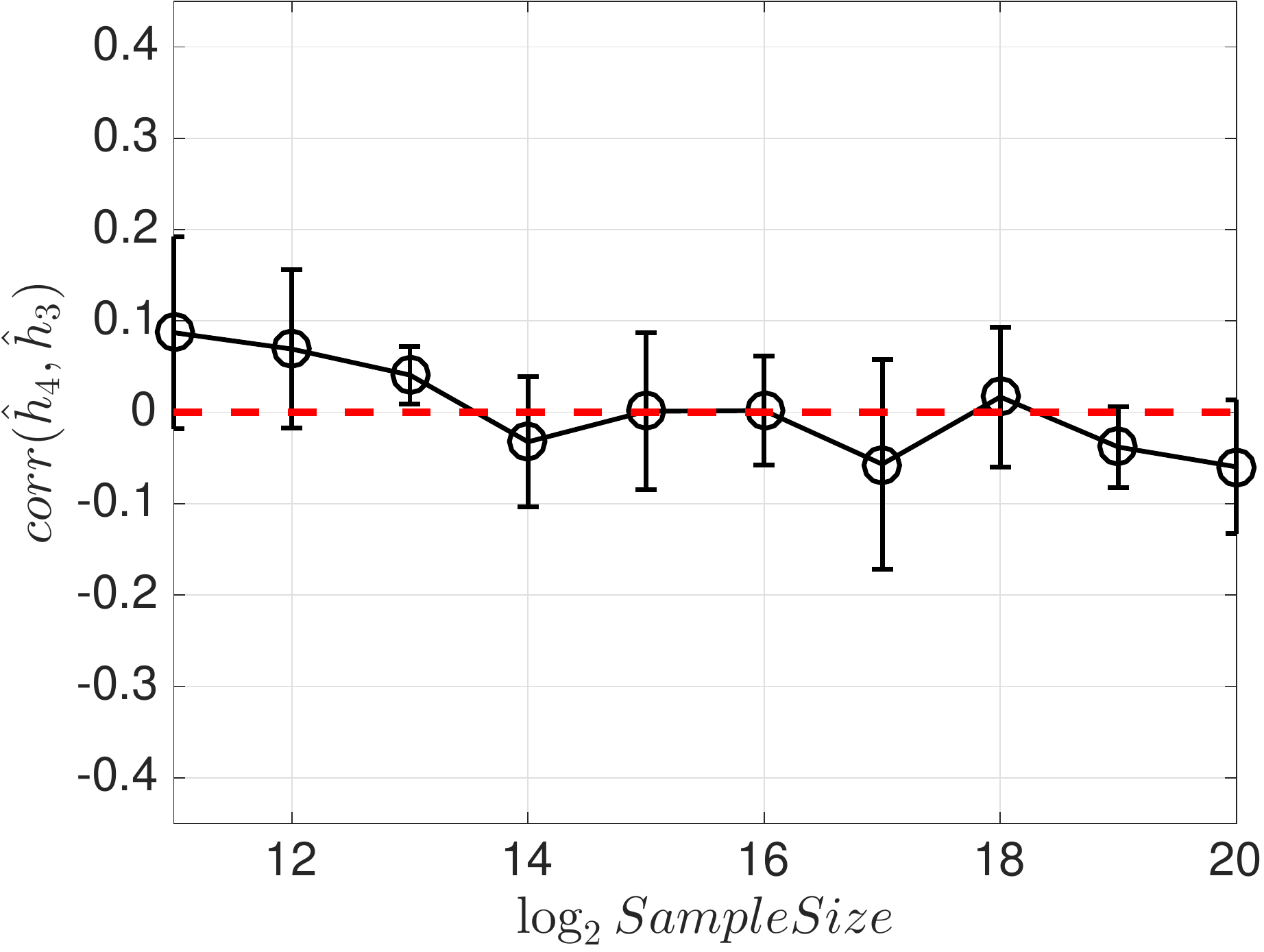}
\includegraphics[width=0.4\linewidth]{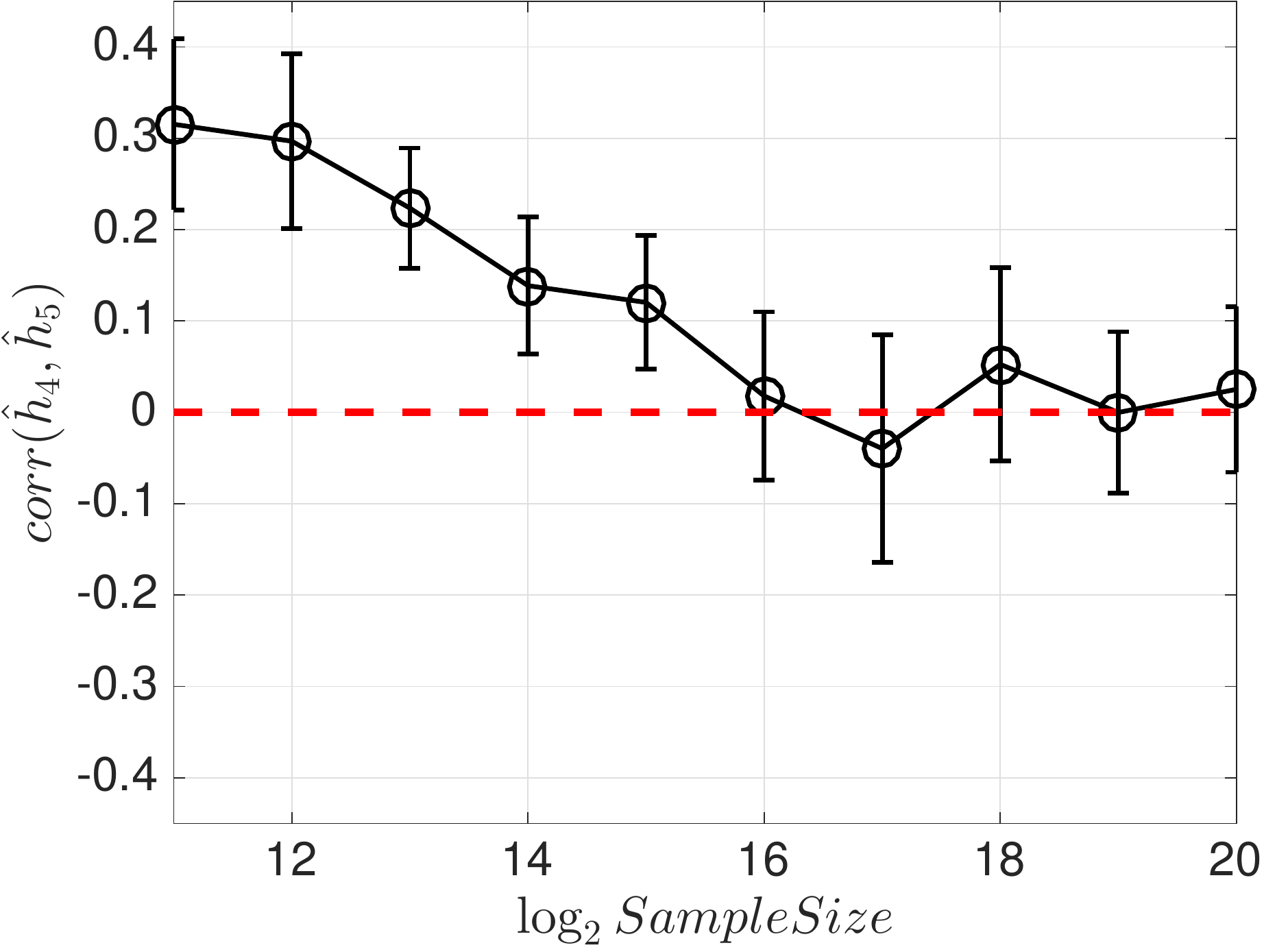}
}
\caption{\label{figd} {\bf Estimation performance (covariance).} Covariance between $\widehat{h}_{q}$ and $\widehat{h}_{q'}$ as a function of the sample size (boostrapped confidence intervals). }
\end{figure}

\noindent {\bf Covariance amongst estimates $ \widehat{h}_{q}$.} Figure \ref{figd} indicates that, asymptotically, the covariances of $\widehat{h}_{q}$ and $\widehat{h}_{q'}$, $q \neq q'$, tend to $0$.
Monte Carlo experiments also consistently showed that $\widehat{h}_{q}$ is generally correlated with $\widehat{h}_{q+1}$ and $\widehat{h}_{q-1}$ (Figure \ref{figd}, bottom plots), with decreasing covariances, while the covariances between $\widehat{h}_{q}$ and $\widehat{h}_{q'}$ with $|q-q'| \geq 2$ are remarkably close to $0$ even for small sample sizes (e.g., Figure \ref{figd}, top plots). These are important facts to be accounted for in practice.\\

\noindent {\bf Asymptotic normality of $ \widehat{h}_{q}$.} Figure \ref{figc} displays the skewness and (excess) kurtosis of the finite sample distribution of the Hurst eigenvalue estimators $\widehat{h}_{q}$. Both measures decrease as the sample size increases. Moreover, the plots provide a measure of the sample sizes needed for an accurate Gaussian approximation to the distribution of each estimator $\widehat{h}_{q}$. In particular, Figure \ref{figc} indicates that normality is reached much faster (i.e., for much smaller sample sizes) for the larger Hurst eigenvalue than for the smaller ones. \\

\begin{figure}[thb]
\centerline{
\includegraphics[width=0.17\linewidth]{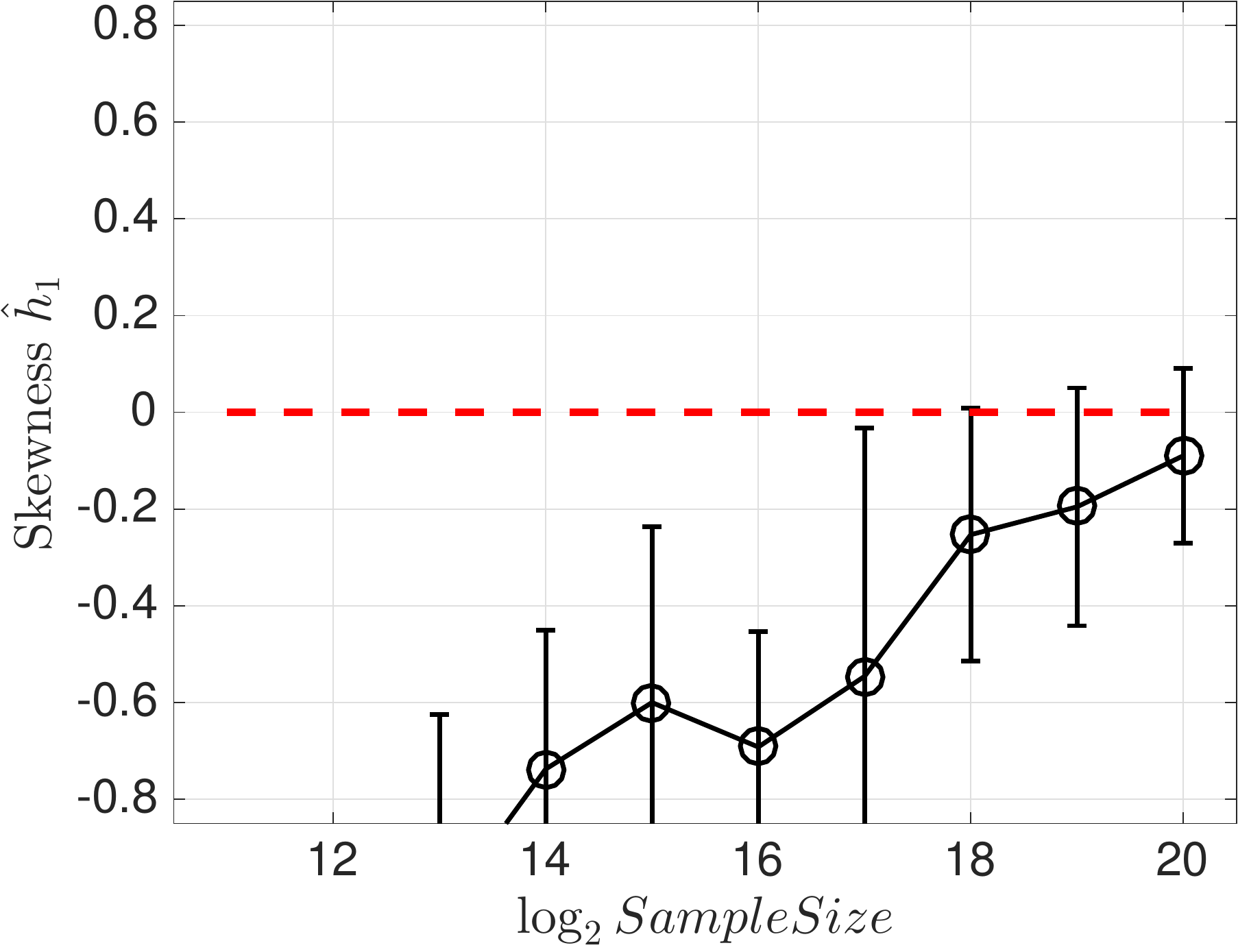}
\includegraphics[width=0.17\linewidth]{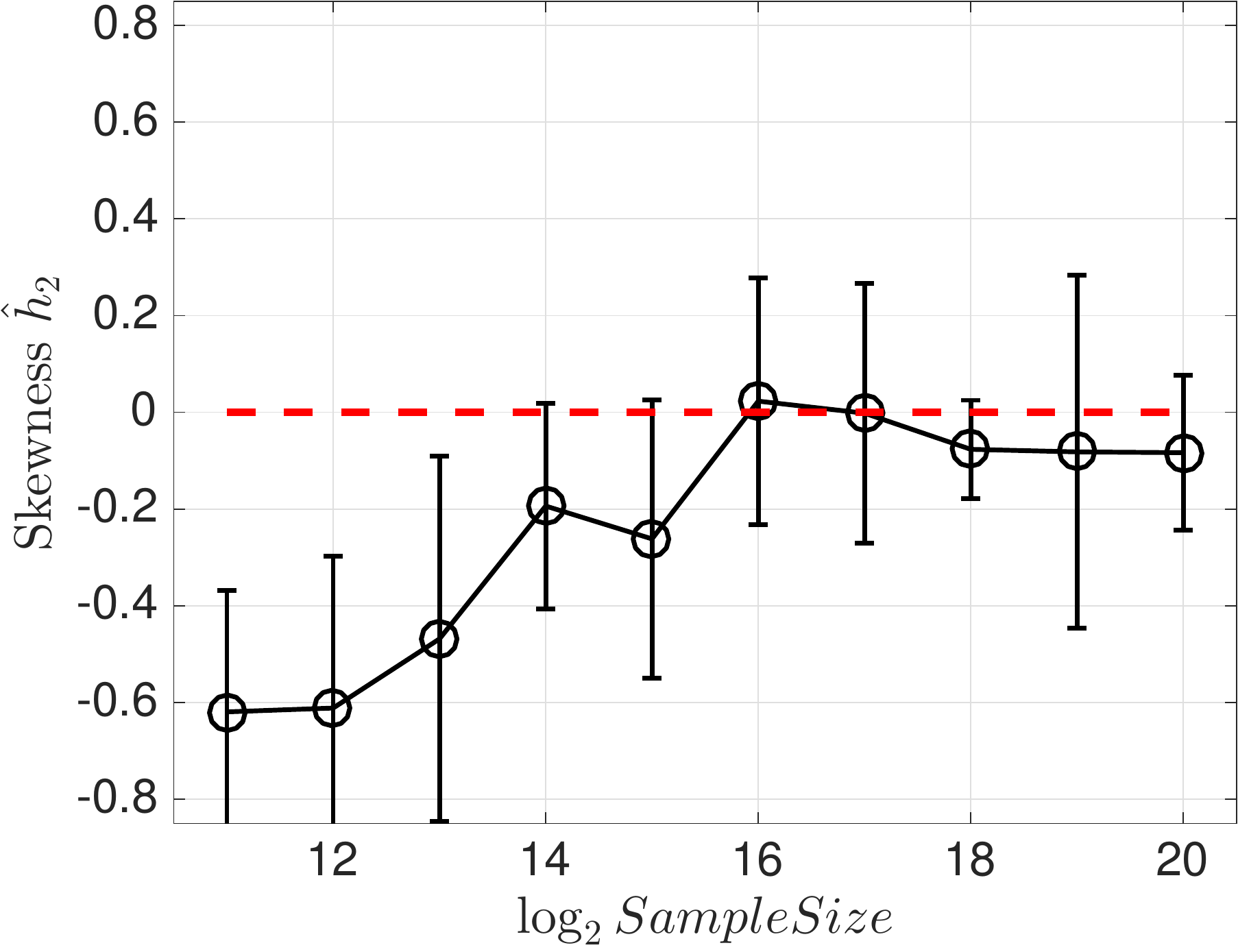}
\includegraphics[width=0.17\linewidth]{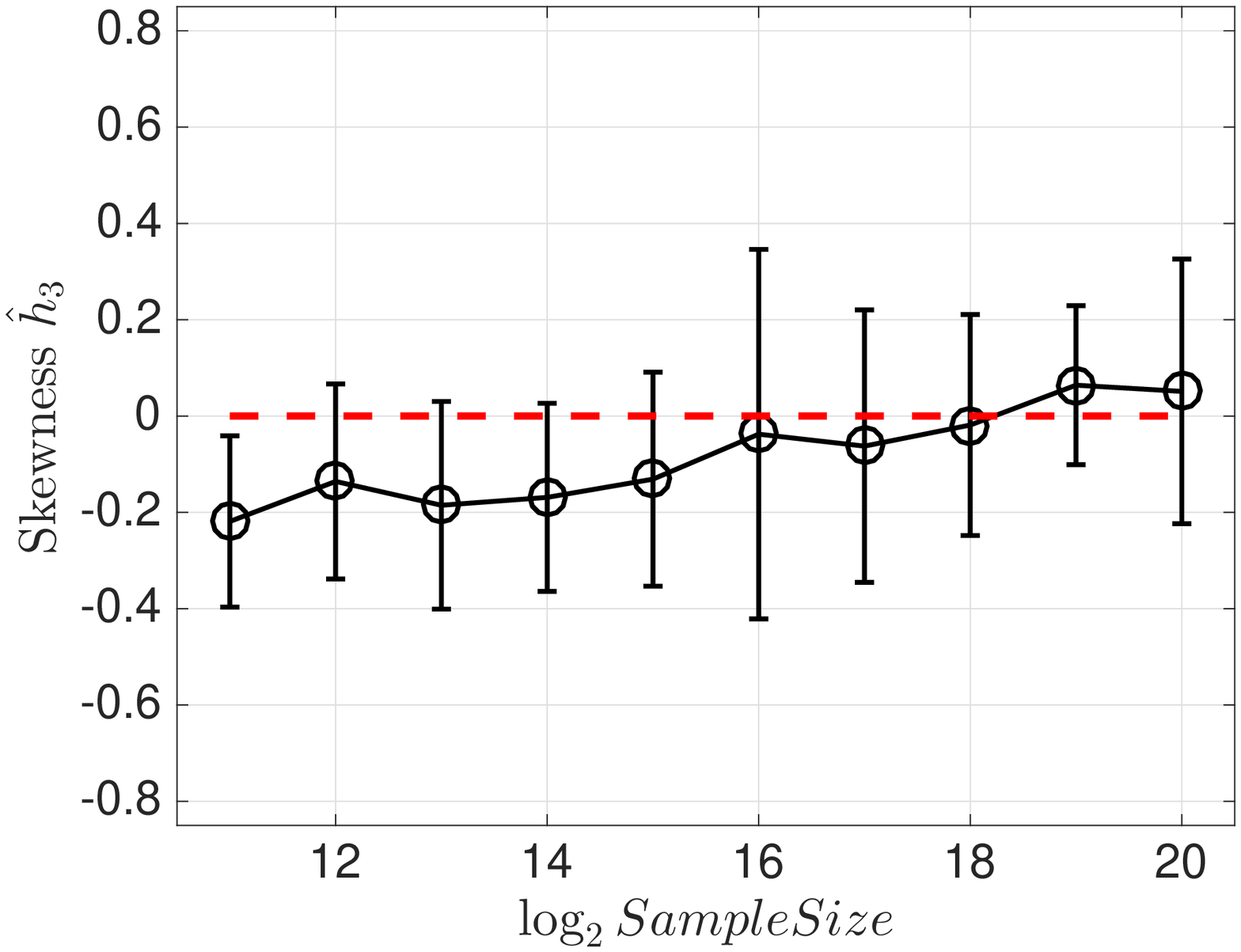}
\includegraphics[width=0.17\linewidth]{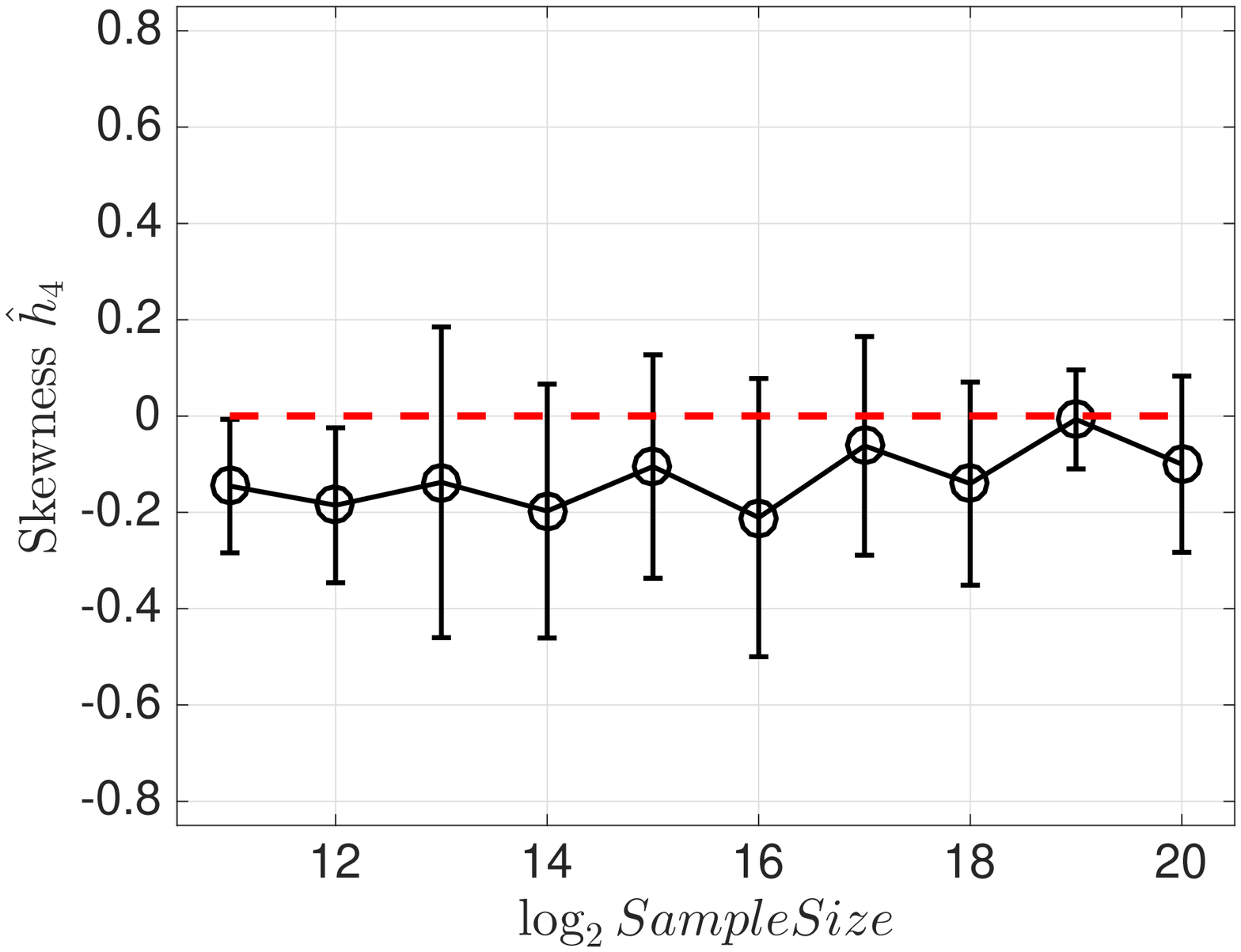}
\includegraphics[width=0.17\linewidth]{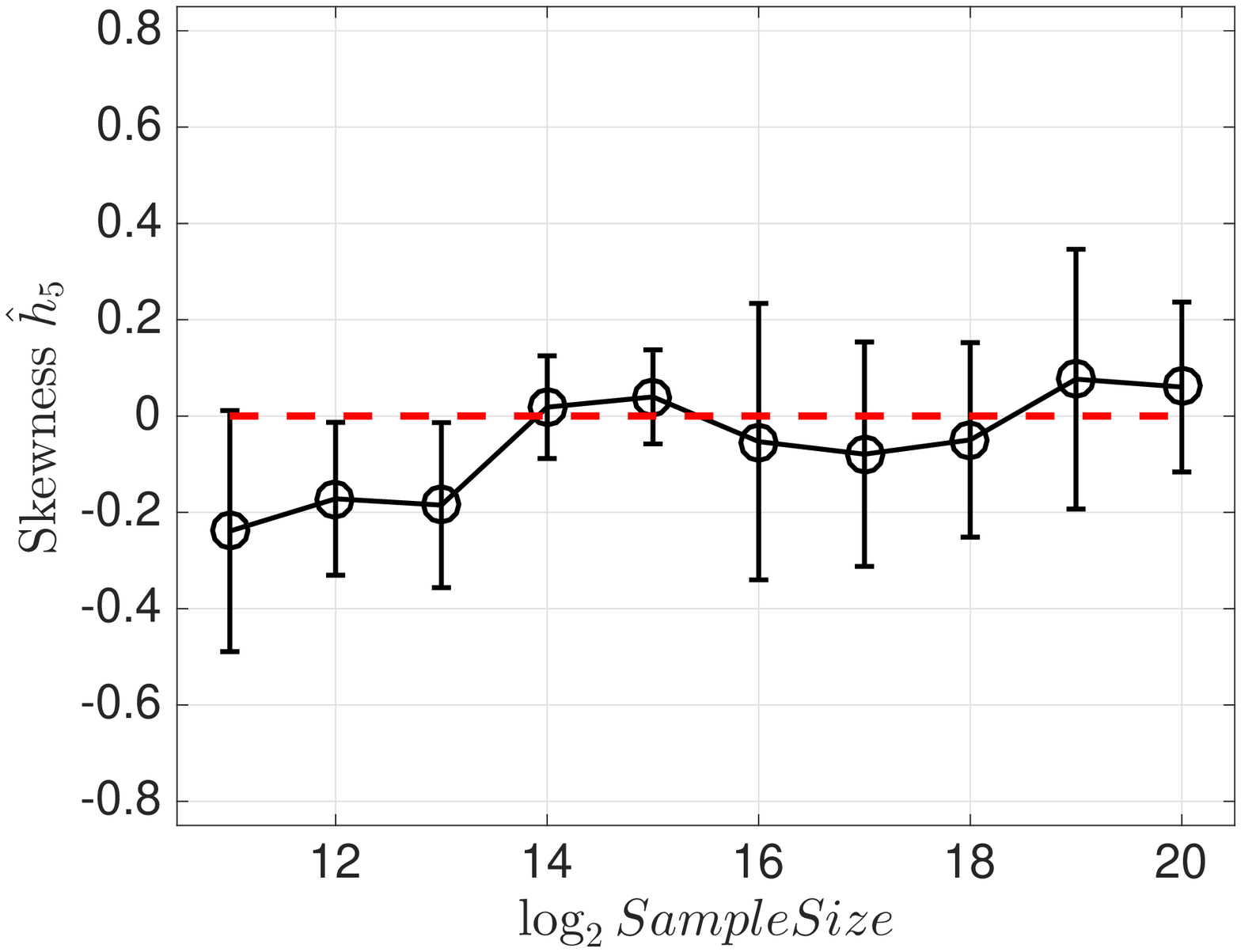}
\includegraphics[width=0.17\linewidth]{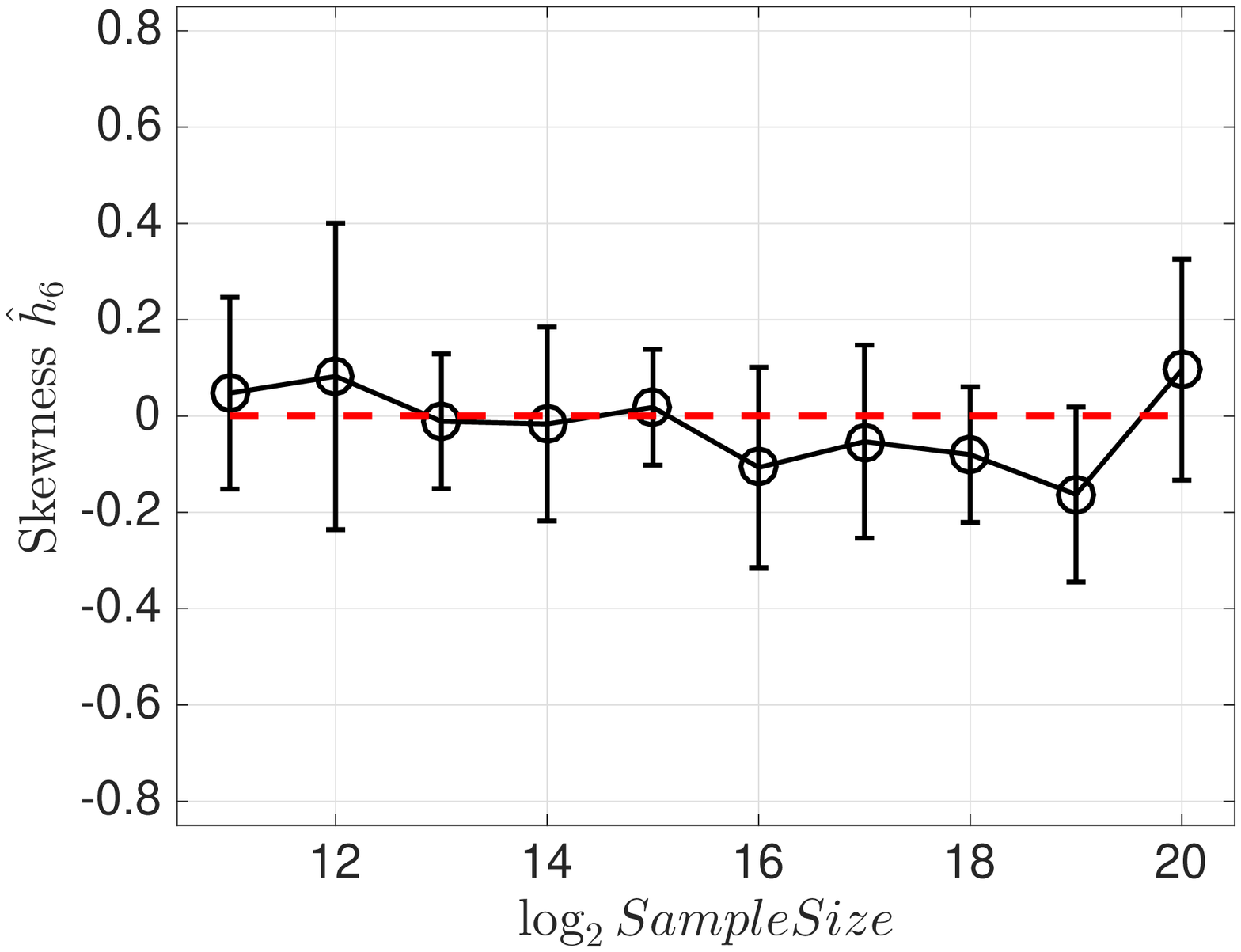}
}
\centerline{
\includegraphics[width=0.17\linewidth]{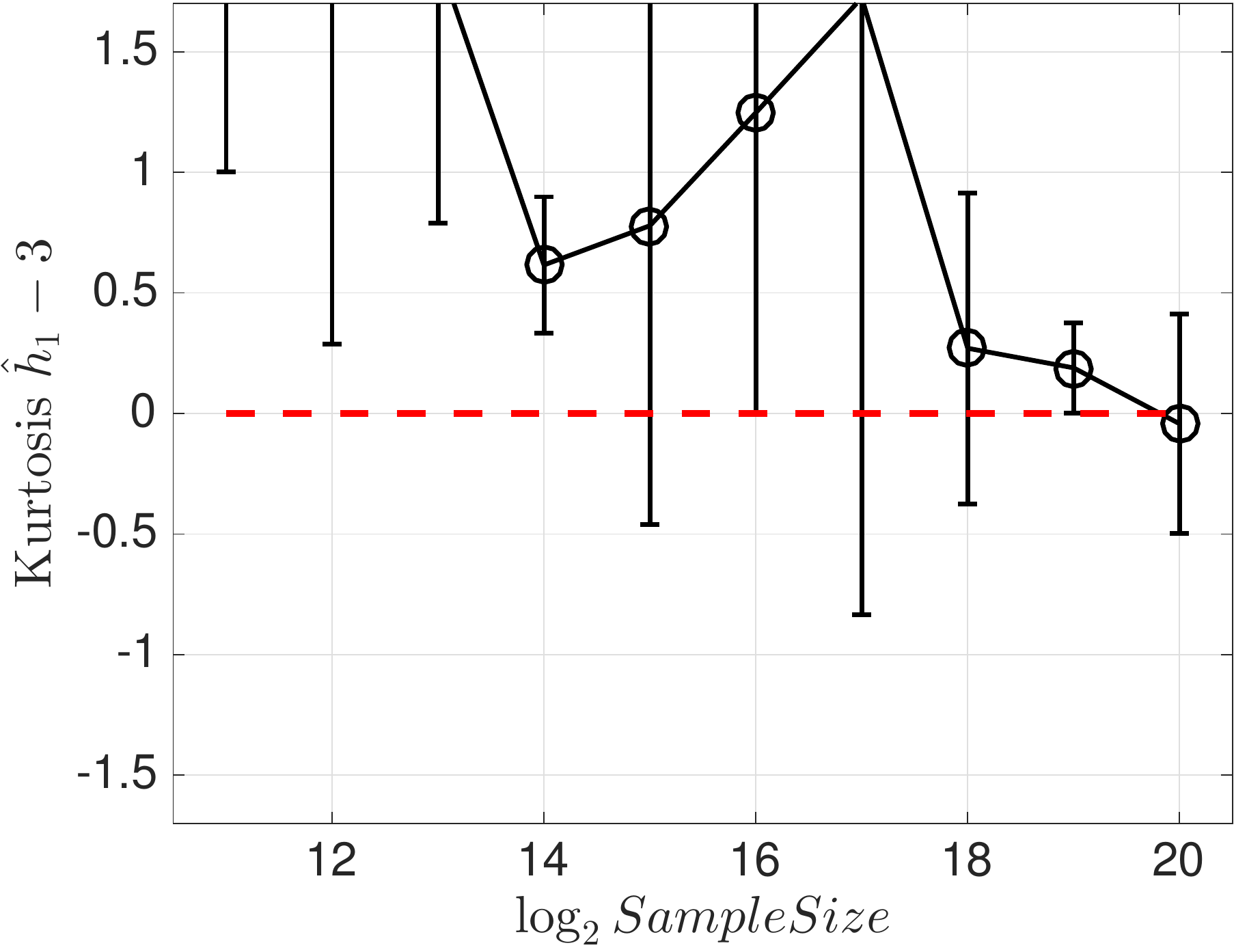}
\includegraphics[width=0.17\linewidth]{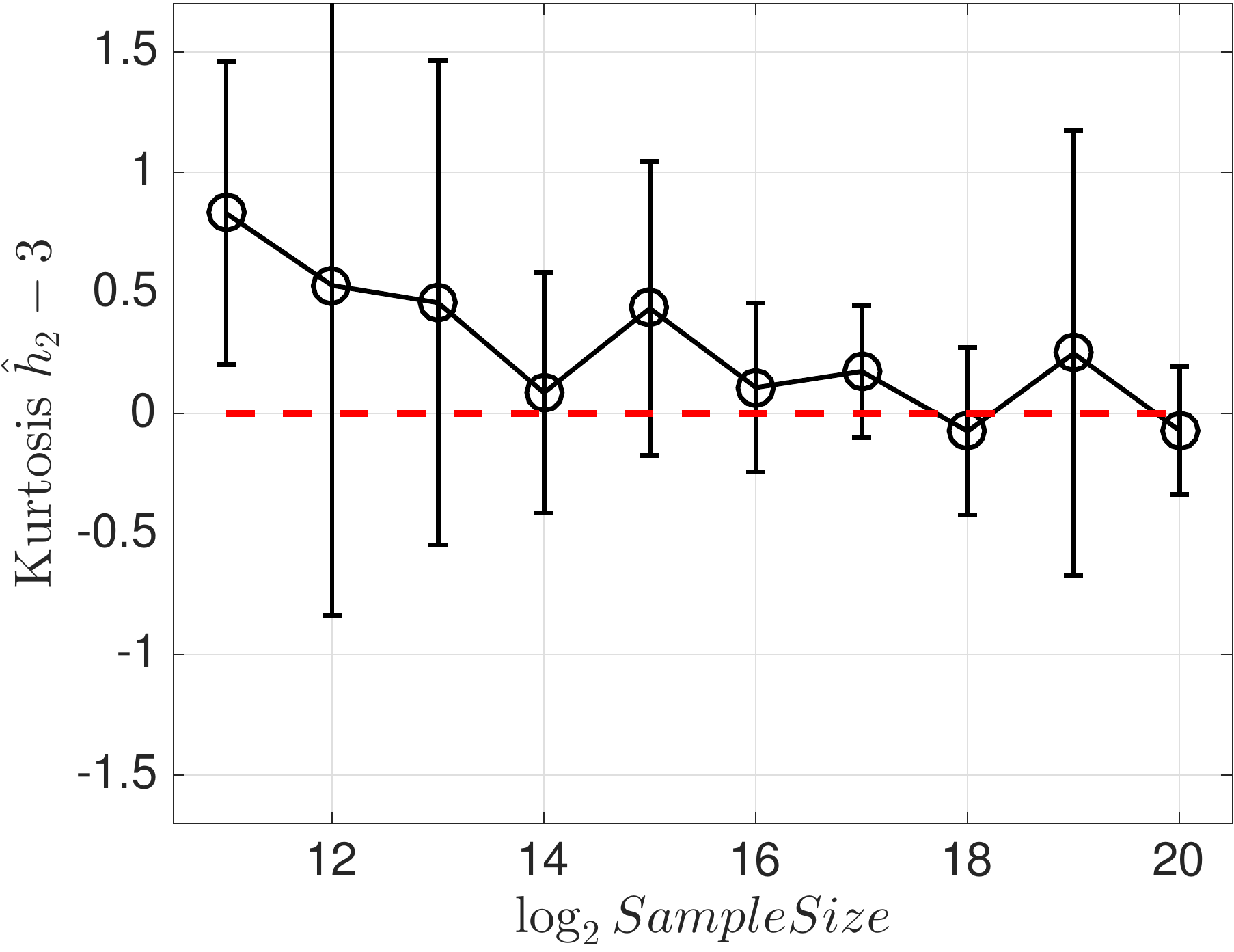}
\includegraphics[width=0.17\linewidth]{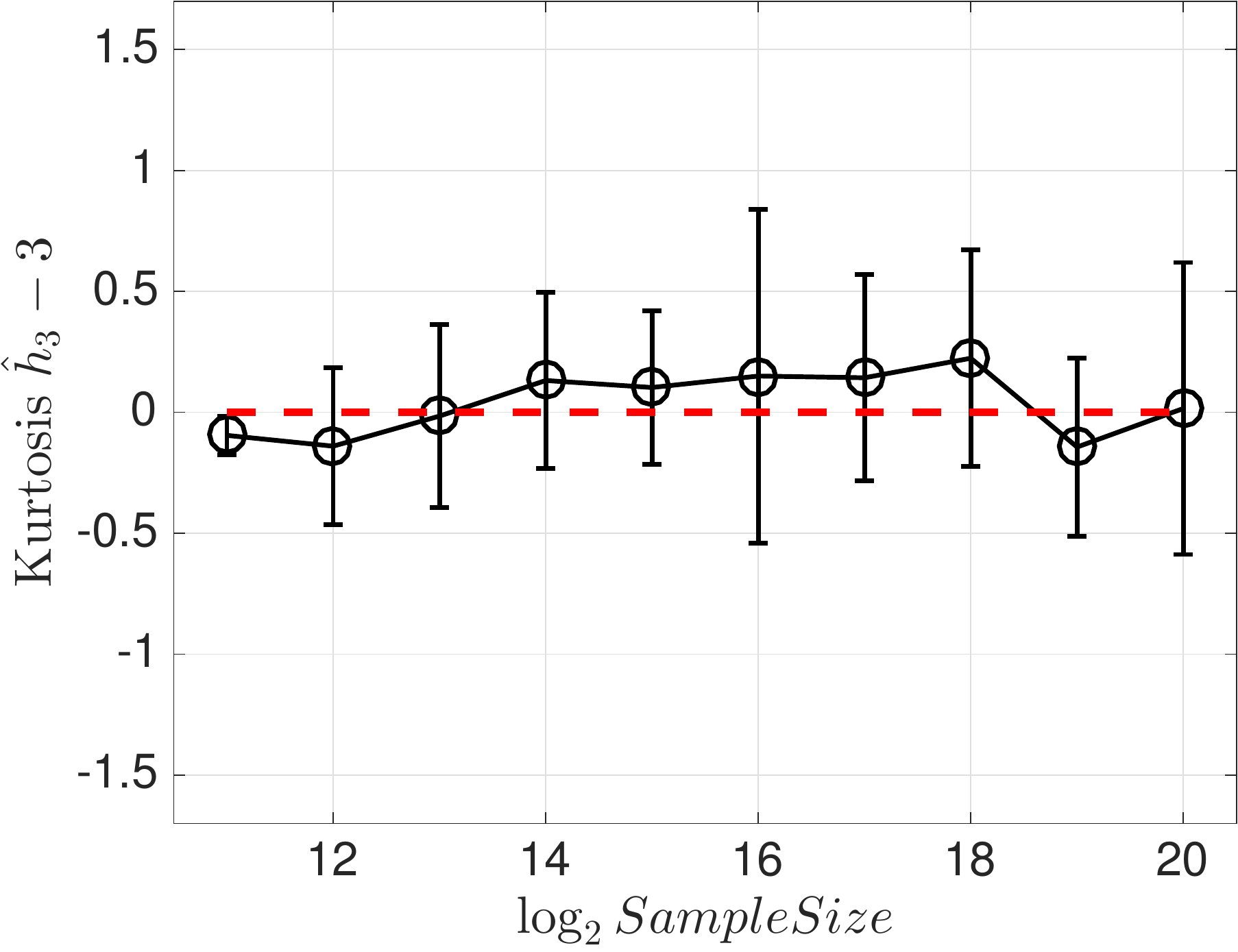}
\includegraphics[width=0.17\linewidth]{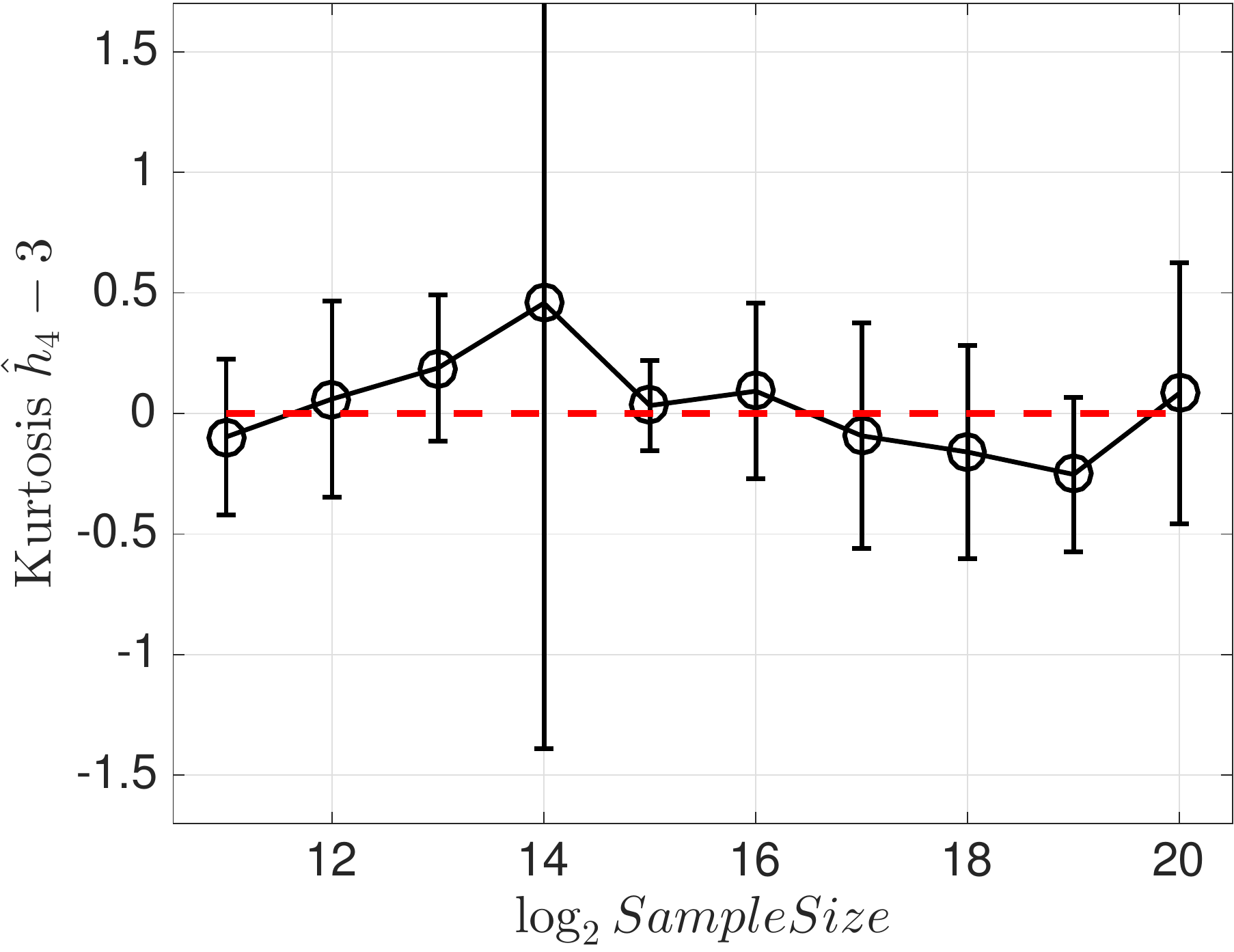}
\includegraphics[width=0.17\linewidth]{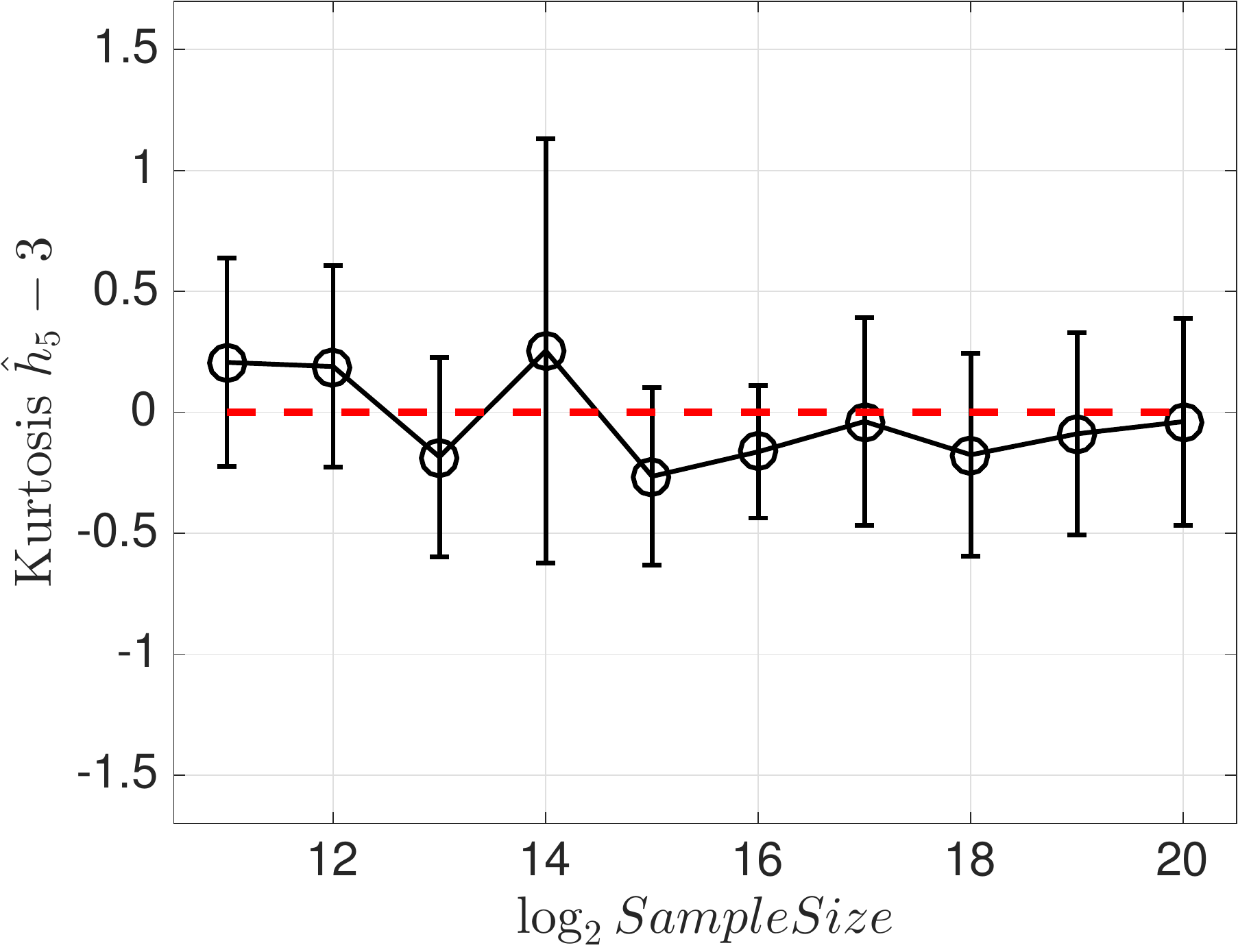}
\includegraphics[width=0.17\linewidth]{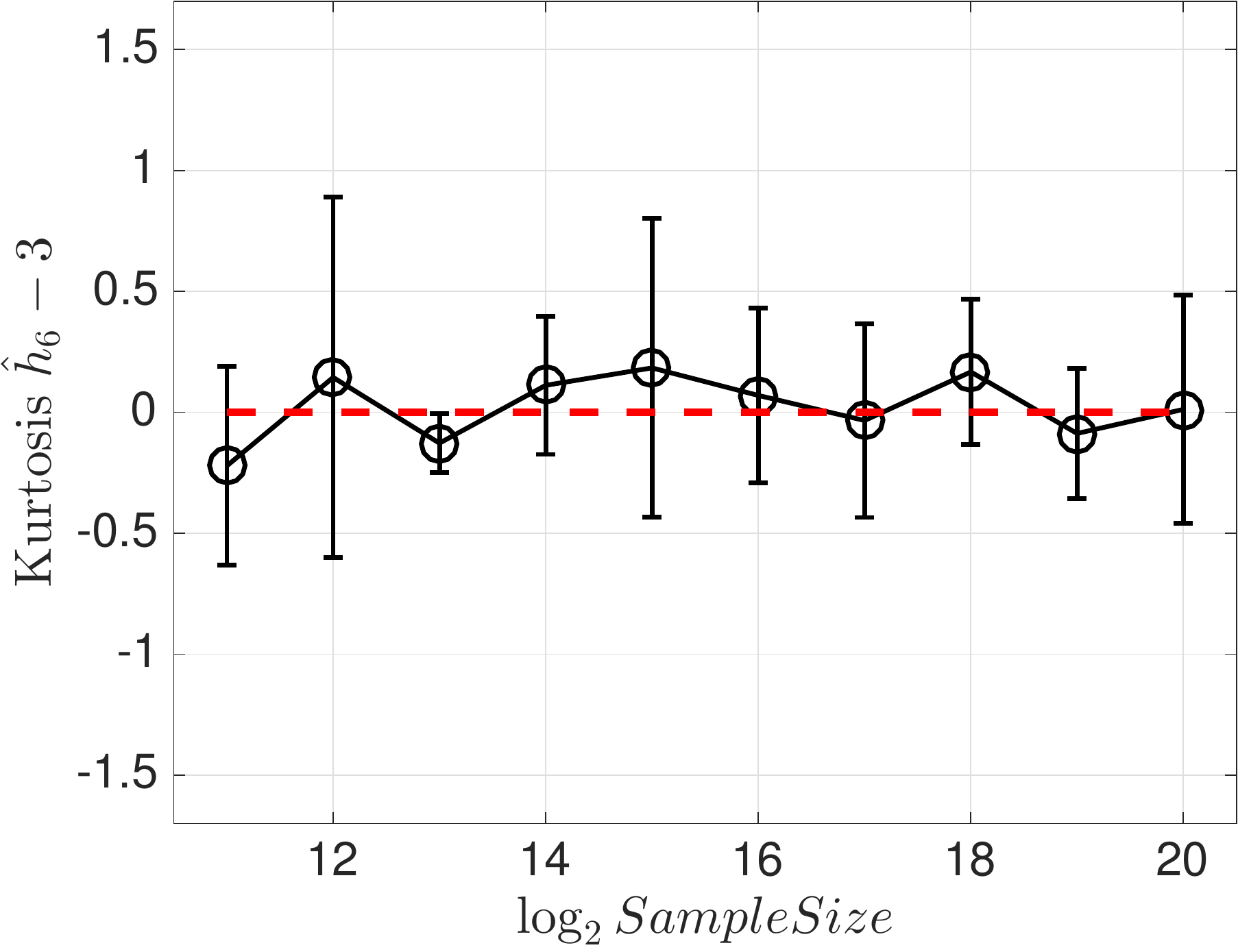}
}
\caption{\label{figc} {\bf Asymptotic normality.} Skewness (top) and kurtosis (bottom) for $\widehat{h}_q$ as functions of the sample size (bootstrapped confidence intervals).}
\end{figure}

\noindent {\bf Scaling range selection for estimation.} In our Monte Carlo studies, the log-regression octave range ($j_1, j_2$) was set a priori.
The choice of octaves $j$ involved in the estimation of Hurst eigenvalues is a way of balancing the bias-variance trade-off. 
On one hand, a large $j_1$ leads to a small bias. However, given the small number of sum terms in the sample wavelet variances \eqref{e:W(2j)}, it also results in a large estimation variance. On the other hand, a small $j_1$ reduces the variance at the price of increased bias. Monte Carlo studies not reported show that small values of $j_1$ lead to an overall better performance in terms of mean squared error, hence the choice $j_1=6$ in the experiments reported above.
The choice of optimal scaling ranges (which may depend on the rank of the Hurst eigenvalue) is a topic for future investigation.


\begin{figure}[thb]
\centerline{
\includegraphics[width=0.40\linewidth]{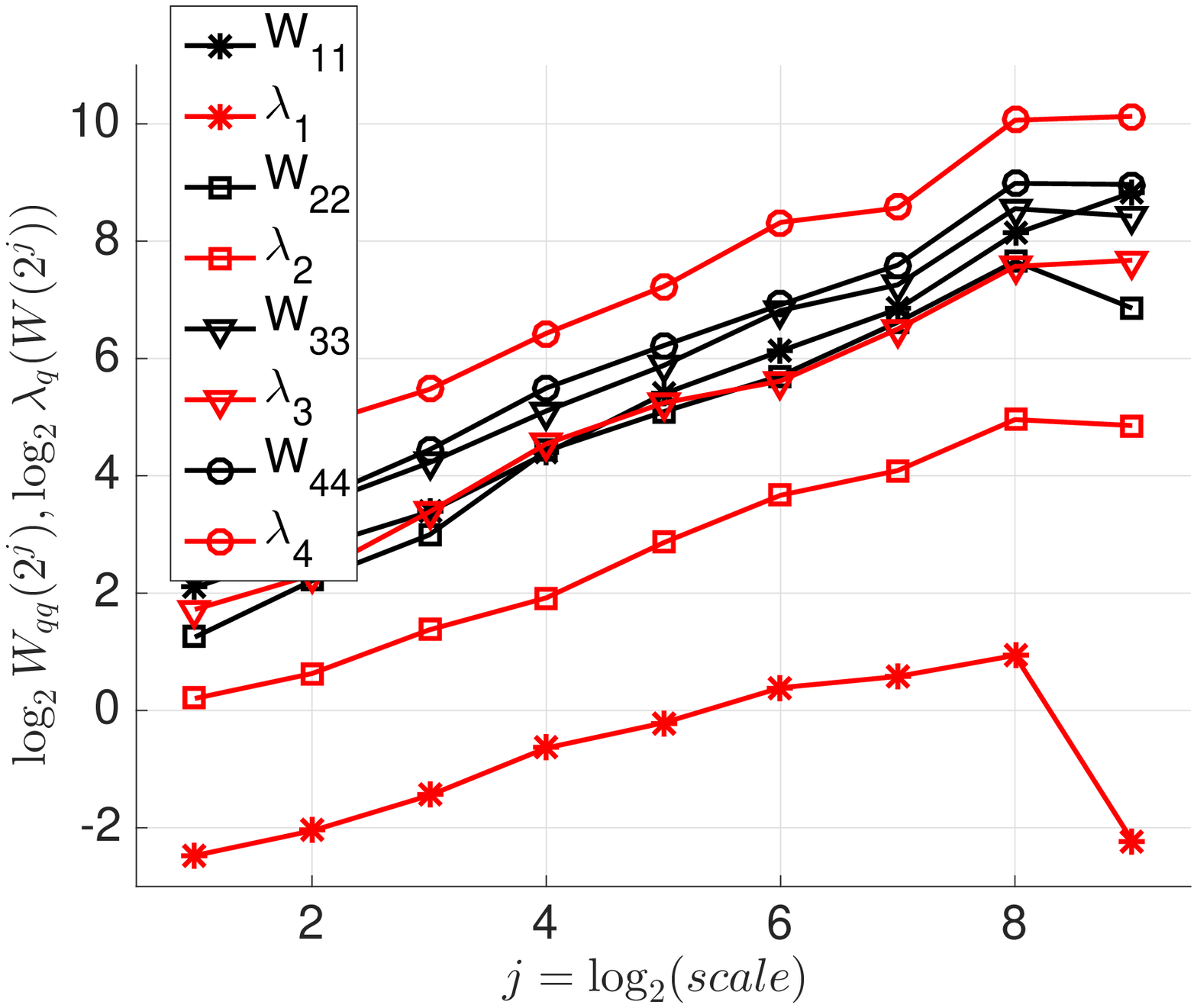}
\includegraphics[width=0.40\linewidth]{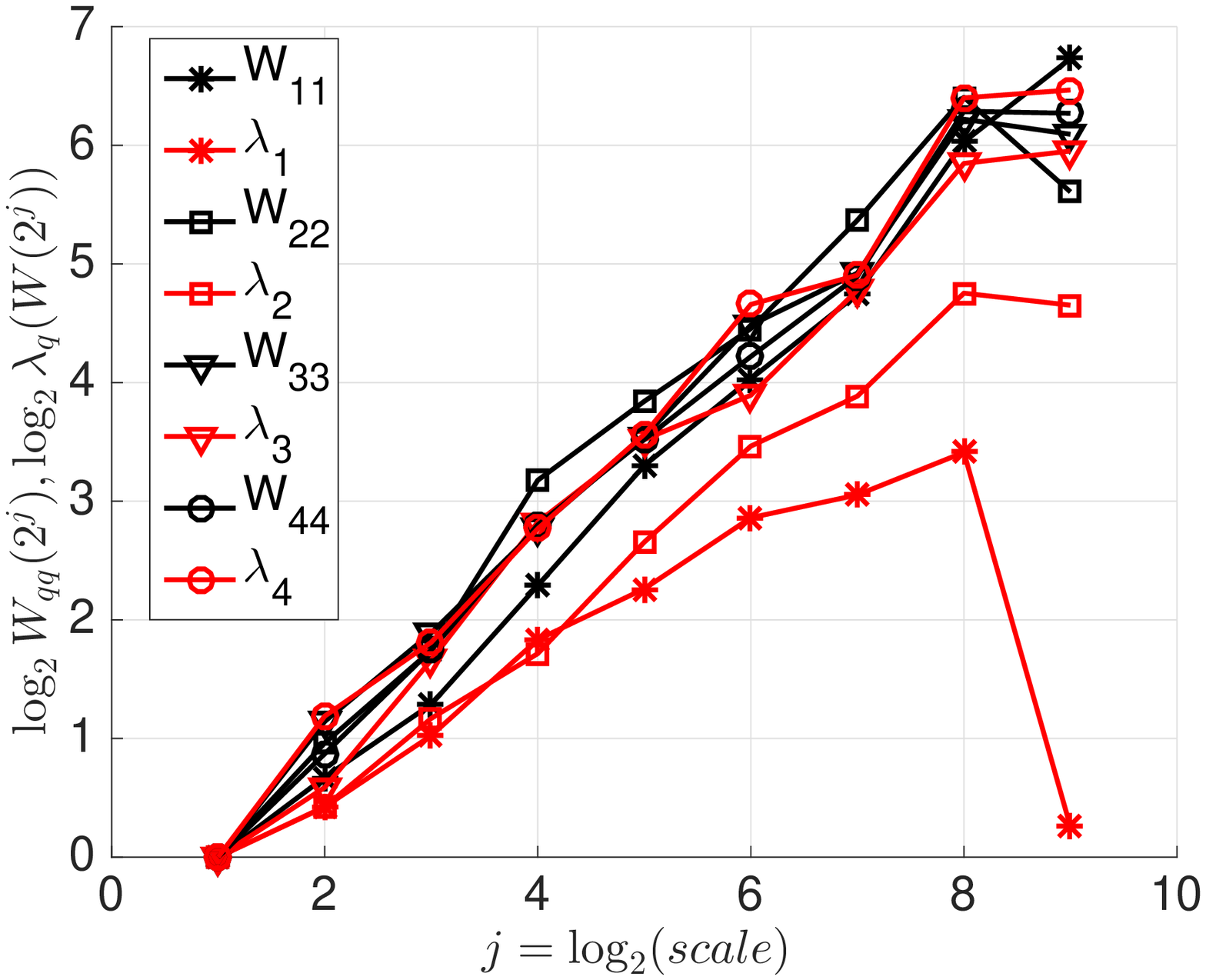}
}
\caption{\label{figInternet} {\bf 4-variate self-similarity analysis of Internet traffic (MAWI trace of June, 15th, 2007).} Functions $\log_2 W(2^j)_{qq}$ (solid black lines) and $\log_2 \lambda_q(2^j)$ (dashed red lines) for each of the 4 traffic components. On the right plot, the 8 functions were set to $0$ at scale $ 2^1$, with scale $2^0=1$ corresponding to $0.25$s. The functions $\log_2 W(2^j)_{qq}$ display linear behavior, which indicates self-similarity. They also have similar slopes, leading to roughly equal $h$ estimates for all 4 times series. By contrast, the functions $\log_2 \lambda_q(2^j)$ also show linear behavior, but with different Hurst eigenvalues. This is evidence of the rich multivariate structure of Internet traffic (see Table \ref{t:hurst} for Hurst eigenvalue estimates).}
\end{figure}

\section{Internet traffic modeling}\label{s:application}

The statistical modeling of Internet traffic is a central task in traffic engineering for the purposes of network design, management, control, security and pricing. Nevertheless, the data has always been modeled as a collection of univariate time series. In this section, we carry out the first study of multivariate self-similarity in Internet traffic data. We use OFBM as a baseline model for (second order) multivariate scaling properties, in the same way that FBM has been applied in the univariate context.

Empirical computer network traffic analysis started in the 1990s and hence can be considered a relatively new scientific field. Yet, the striking properties of Internet traffic data were revealed from the beginning. Standard models of traffic include a Poisson process with independent inter-arrival times or short range (exponentially decaying) autocorrelation structures. Instead, collected data was found to be characterized by significant \emph{burstiness} (strong irregularity over time) as well as slow, power law correlation decay (see Leland et al.\ \cite{leland:taqqu:willinger:wilson:1994}, Paxson and Floyd \cite{paxson:floyd:1995}, Erramilli et al.\ \cite{erramilli:narayan:willinger:1996}, Willinger et al.\ \cite{willinger:taqqu:sherman:wilson:1997}, Abry and Veitch \cite{AbryVeitch98}, Park and Willinger \cite{park:willinger:2000}, Erramilli et al.\ \cite{erramilli:roughan:veitch:willinger:2002}). It was soon recognized that the latter phenomenon, referred to as asymptotic self-similarity or long range dependence (LRD; Beran \cite{beran:1994}), had strong implications for network management due to its dramatic impact on queuing performance (see Norros \cite{norros:1994}, Boxma and Dumas \cite{boxma:dumas:1997}, Boxma and Cohen \cite{boxma:cohen:2000}). This lead to substantial research efforts in the last 20 years (see Willinger et al.\ \cite{willinger:taqqu:erramilli:1996}, Willinger et al.\ \cite{willinger2002scaling} and Fontugne et al.\ \cite{fontugne:abry:fukuda:veitch:cho:borgnat:wendt:2017} for reviews and references therein for details).

Self-similarity in Internet traffic has been widely investigated, but it remains controversial and a number of issues are still open. The data is often modeled in terms of aggregate time series. The latter consist of either IP (Internet Protocol) packet or byte counts on a given link, at a given time resolution $\Delta$. It has long been debated whether self-similarity is rather a property of the packet or byte count time series. Another question is whether traffic should be analyzed globally, with traffic traveling in both directions of the link, or if it should be split into directional traffic. In Dewaele et al.\ \cite{Dewaele2007} and Borgnat et al.\ \cite{borgnat:infocom2009}, these issues are analyzed and commented on in light of self-similarity. In this section, we consider a 4-variate setting, obtained as byte and packet counts, for each direction of the link.

The MAWI archive (Cho et al.\ \cite{cho:mitsuya:kato:2000}) is an ongoing collection of Internet traffic traces, captured on a high-speed, high-capacity backbone that mostly connects Japanese academic institutions to the USA. Anonymized traces are made publicly available at \texttt{http://mawi.wide.ad.jp/mawi/} and \texttt{http://mawi.wide.ad.jp/}, and several of them were kindly prepared for analysis and made available by the authors of Mazel et al.\ \cite{mazel:fontugne:fukuda:2014}. The data consists of 15 minute recordings, collected everyday at 2pm Japanese time.

It is well known in the field of Internet analysis that traffic is constantly affected by the emergence of anomalies. The latter pose significant hurdles to robust and meaningful statistical modeling of traffic. To tackle this issue, the technique of \textit{random projections } was developed. It consists of splitting each traffic series into a collection of subtraces. It has been reported that the median applied to the independent analysis of these subtraces is a robust statistical description of background (anomaly-free) traffic. This is thoroughly documented in Dewaele et al.\ \cite{Dewaele2007}, Borgnat et al.\ \cite{borgnat:infocom2009} and Fontugne et al.\ \cite{fontugne:abry:fukuda:veitch:cho:borgnat:wendt:2017}.

In this work, the random projection procedure yields 16 different subtraces. For each subtrace, the 4 time series consist of byte and packet counts in each direction (Japan to USA and USA to Japan), aggregated at the reference scale $\Delta = 0.25$s.

We analyze the data both by means of univariate-like and multivariate methodologies, based upon, respectively, the main diagonal entries $\log_2 W(2^j)_{\cdot \cdot}$ and the log-eigenvalue functions $\log_2 \lambda_{\bullet}(2^j)$. The median of each function $\log_2 W(2^j)_{qq}$ and $\log_2 \lambda_{q}(2^j)$, $q = 1,2,3,4$, is taken across subtraces to generate a characterization of self-similarity in Internet traces.

Examples of such functions are shown in Figure \ref{figInternet}, left panel. The functions $\log_2 W(2^j)$ clearly display linear behavior, hence indicating self-similarity. They are, however, nearly identical, with similar slopes. Incorrectly, this leads to the conclusion that the 4 times series are characterized by the same Hurst exponent (cf.\ Table~\ref{t:hurst}, top row).

Multivariate analysis also confirms self-similarity by means of the linear behavior of the functions $\log_2 \lambda_{\bullet}(2^j)$. However, the slopes clearly differ, which is evidence for the presence of different Hurst eigenvalues for the 4-variate data (cf.\ Table~\ref{t:hurst}, bottom row). This reveals the rich character of multivariate self-similarity in Internet traffic.

This finding is important in several ways. First, it complements 20 years of self-similarity analysis in Internet traffic and significantly enhances and renews it. Second, multivariate self-similarity modeling may permit revisiting several traffic engineering issues. Notably, it may underpin the construction of new anomaly detection schemes that will fruitfully complement those already available (see Mazel et al.\ \cite{mazel:fontugne:fukuda:2014}).

Results are reported here for one day traces, but equivalent conclusions can be drawn from numerous other traces in the MAWI repository. A longitudinal large-scale study is currently being conducted in collaboration with the teams managing the MAWI repository, aiming both at multivariate self-similarity characterization and at exploring its potential interest in anomaly detection.

\begin{table}[!h]
\begin{center}
\begin{tabular}{lrrrr}\hline
                                   & $\widehat h_1$&$\widehat h_2$&$\widehat h_3$&$\widehat h_4$ \\  \hline  \hline
{univariate-like} &0.85&0.86&0.86 & 0.90\\ \hline
{multivariate} &0.51&0.69&0.82 & 0.86\\ \hline \hline
\end{tabular}
\caption{\label{t:hurst}{\bf Univariate-like versus multivariate self-similarity analysis of Internet traffic.} Estimated Hurst eigenvalues by univariate-like (top row) and multivariate analysis (bottom row), based on the functions $\log_2 W(2^j)_{qq}$ and $\log_2 \lambda_q(2^j)$, respectively, for $q = 1,2,3,4$.}
\end{center}
\end{table}

\section{Conclusion}

In this paper, we construct the first joint estimator of the real parts of the Hurst eigenvalues of $n$-variate OFBM. The procedure consists of a wavelet regression on the log-eigenvalues of the sample wavelet spectrum. The estimator is shown to be consistent for any time reversible OFBM and, under stronger assumptions, also asymptotically normal starting from either continuous or discrete time measurements. Simulation studies establish the finite sample effectiveness of the methodology in terms of bias, mean squared error and asymptotic normality, and illustrate its benefits compared to univariate-like (entrywise) analysis. An application to 4-variate time series of Internet traffic data from the MAWI archive turned up evidence of multivariate self-similarity. Future work includes $(i)$ the quantification of confidence intervals and optimal regression procedures in practice; $(ii)$ the construction of methodology for instances where Hurst eigenvalues display multiplicity strictly between 1 and $n$; $(iii)$ applications in anomaly detection in Internet traffic. In the near future, a {\sc Matlab} toolbox for the estimators proposed in this paper will be made publicly available.

\appendix

\section{Proofs}

In the proofs, whenever convenient we write $a$ instead of $a(\nu)$.

\subsection{Consistency of wavelet log-eigenvalues}

To show Theorem \ref{t:asympt_normality_lambda2}, recall that the Courant-Fischer principle provides a variational characterization of the eigenvalues of a matrix $M \in {\mathcal H}(n,\bbR)$. In other words, it states that, for $q = 1,\hdots,n$,
\begin{equation}\label{e:Courant-Fischer}
\lambda_{q}(M) = \inf_{{\mathcal U}_q} \sup_{u \in {\mathcal U}_q \cap S^{n-1}_{\bbC}}u^*Mu= \sup_{{\mathcal U}_{n-q+1}}\inf_{u \in {\mathcal U}_{n-q+1} \cap S^{n-1}_{\bbC}} u^*Mu,
\end{equation}
where ${\mathcal U}_q$ is an $q$-dimensional subspace of $\bbC^n$ (e.g., Horn and Johnson \cite{horn:johnson:2012}, chapter 4).\\

\noindent {\sc Proof of Theorem \ref{t:consistency}}: The limits \eqref{e:log_lambdaE/2_log_a(n)_distinct Re} are a direct consequence of \eqref{e:log_lambdaE/2_log_a(n)}. We will only show the first limit in \eqref{e:log_lambdaE/2_log_a(n)}, since the second one can be proved by a similar and slightly simpler argument.

We first lay out a few facts that will be used throughout the proof. Note that the nonsingularity of $P$ (see \eqref{e:H_Jordan}) implies that
\begin{equation}\label{e:C1=<|Pu|2=<C2}
C_1 \leq \|v\|^2 = \|Pu\|^2 \leq C_2, \quad u \in S^{n-1}_{\bbC}.
\end{equation}
Under conditions \eqref{e:eigen-assumption}, \eqref{e:full-rank} and \eqref{e:time_revers}, by operator self-similarity the sample wavelet spectrum satisfies the operator scaling relation
\begin{equation}\label{e:W(a(nu)2^j)_is_o.s.s.}
W_a(a2^j) \stackrel{d}= a^{H}W_a(2^j) a^{H^*}
\end{equation}
for $W_a(2^j)$ as in \eqref{e:Wa(a2^j)} (c.f.\ \eqref{e:EW(a(nu)2j)_in_the_case_blindsourcing}). Now define the set $E_{\delta_1,\delta_2} = \{\omega: \delta_1 \leq \lambda_1(W_a(2^j)) \leq \lambda_n(W_a(2^j))\leq \delta_2\}$, $0 < \delta_1 \leq \delta_2$. Note that, by Theorem \ref{t:asymptotic_normality_wavecoef_fixed_scales}, $P(E_{\delta_1,\delta_2})\rightarrow 1$, $\nu \rightarrow \infty$, for some pair $0 < \delta_1 \leq \delta_2$. So, for any small $\varepsilon > 0$,
\begin{equation}\label{e:P(Edelta)->1}
1 - P(E_{\delta_1,\delta_2}) \leq \varepsilon, \quad \nu \geq \nu_0,
\end{equation}
for some $\nu_0 \in \bbN$. For any $u \in S^{n-1}_{\bbC}$, by Lemma \ref{l:S_1=<S_2} applied to $S_1 = \delta_1 a^{H}a^{H^*}$, $S_2 = a^{H}W_a(2^j)a^{H^*}$ and $S_1 = a^{H}W_a(2^j)a^{H^*}$, $S_2 = \delta_2 a^{H}a^{H^*}$,
\begin{equation}\label{e:lambdal(a(n)HW(2j)a(n)H*)_bounds}
\delta_1\hspace{1mm} \lambda_{q}(a^{H} a^{H^*}) \leq \lambda_{q}(a^{H}W_a(2^j) a^{H^*}) \leq \delta_2 \hspace{1mm}\lambda_{q}(a^{H}a^{H^*}), \quad q = 1,\hdots, n,
\end{equation}
for $\omega \in E_{\delta_1,\delta_2}$. Recall that
$$
a^{H}a^{H^*} = P \textnormal{diag}(a^{J_{h_1}},\hdots,a^{J_{h_{n'}}})(P^*P)^{-1}\textnormal{diag}(a^{J_{h_1}},\hdots,a^{J_{h_{n'}}}) P^*.
$$
and set $C'_1 = \lambda_1((P^*P)^{-1})$, $C'_2 = \lambda_n((P^*P)^{-1})$. Now consider Lemma \ref{l:S_1=<S_2} applied to
$$
S_1 = C'_1 P \textnormal{diag}(a^{J_{h_1}}a^{J^*_{h_1}},\hdots,a^{J_{h_{n'}}}a^{J^*_{h_{n'}}})P^*, \quad S_2 = a^{H}a^{H^*},
$$
and
$$
S_1 = a^{H}a^{H^*}, \quad S_2 = C'_2 P \textnormal{diag}(a^{J_{h_1}}a^{J^*_{h_1}},\hdots,a^{J_{h_{n'}}}a^{J^*_{h_{n'}}})P^*.
$$
We obtain the double bound
$$
C'_1 \hspace{1mm}\lambda_{q} \Big(P \textnormal{diag}(a^{J_{h_1}}a^{J^*_{h_1}},\hdots,a^{J_{h_{n'}}}a^{J^*_{h_{n'}}})P^* \Big)  \leq \lambda_{q}(a^{H}a^{H^*})
$$
\begin{equation}\label{e:lambdal(a(n)Ha(n)H*)_bounds}
\leq C'_2 \hspace{1mm}\lambda_{q} \Big(P \textnormal{diag}(a^{J_{h_1}}a^{J^*_{h_1}},\hdots,a^{J_{h_{n'}}}a^{J^*_{h_{n'}}})P^* \Big), \quad q = 1,\hdots,n.
\end{equation}
However, in view of \eqref{e:C1=<|Pu|2=<C2}, we can write
$$
C''_1 \inf_{{\mathcal U}_{q}} \sup_{u \in {\mathcal U}_{q} \cap S^{n-1}_{\bbC}}\Big\{\frac{u^*P}{\|u^*P\|} \textnormal{diag}(a^{J_{h_1}}a^{J^*_{h_1}},\hdots,a^{J_{h_{n'}}}a^{J^*_{h_{n'}}})\frac{P^*u}{\|u^*P\|} \Big\}
$$
$$
\leq \lambda_{q} \Big(P \textnormal{diag}(a^{J_{h_1}}a^{J^*_{h_1}},\hdots,a^{J_{h_{n'}}}a^{J^*_{h_{n'}}})P^* \Big)
$$
\begin{equation}\label{e:bound_lambdal_no_B}
\leq C''_2 \inf_{{\mathcal U}_{q}} \sup_{u \in {\mathcal U}_{q} \cap S^{n-1}_{\bbC}}\Big\{\frac{u^*P}{\|u^*P\|} \textnormal{diag}(a^{J_{h_1}}a^{J^*_{h_1}},\hdots,a^{J_{h_{n'}}}a^{J^*_{h_{n'}}})\frac{P^*u}{\|u^*P\|}\Big\}.
\end{equation}
By expressions \eqref{e:bound_lambdal_no_B} and \eqref{e:z^Jlambda},
$$
0 < C'''_1 a^{2 \Re h_{q'}}  \leq C''_1 \inf_{{\mathcal U}_{q}} \sup_{u \in {\mathcal U}_{q} \cap S^{n-1}_{\bbC}}\Big\{u^*\textnormal{diag}(a^{J_{h_1}}a^{J^*_{h_1}},\hdots,a^{J_{h_{n'}}}a^{J^*_{h_{n'}}})u\Big\}
$$
$$
\leq \lambda_{q}(P \textnormal{diag}(a^{J_{h_1}}a^{J^*_{h_1}},\hdots,a^{J_{h_{n'}}}a^{J^*_{h_{n'}}})P^*)
$$
$$
\leq C''_2 \inf_{{\mathcal U}_{q}} \sup_{u \in {\mathcal U}_{q} \cap S^{n-1}_{\bbC}}\Big\{u^* \textnormal{diag}(a^{J_{h_1}}a^{J^*_{h_1}},\hdots,a^{J_{h_{n'}}}a^{J^*_{h_{n'}}})u\Big\}
$$
\begin{equation}\label{e:lambdal(Pdiag(a(n)^h1,...,a(n)^hn)P*)}
= C'''_2 a^{2 \Re h_{q'}} (1 + o_P(\log^{2(n-1)} a(\nu)))
\end{equation}
for $\omega \in E_{\delta_1, \delta_2}$. In the first inequality in \eqref{e:lambdal(Pdiag(a(n)^h1,...,a(n)^hn)P*)}, we use the fact that $\det [ a(\nu)^{-2 \Re h_{\cdot}}a(\nu)^{J_{h_\cdot}}a(\nu)^{J^*_{h_\cdot}}] = 1$, i.e., $C'''_1$ is a strictly positive constant. The second equality in \eqref{e:lambdal(Pdiag(a(n)^h1,...,a(n)^hn)P*)} holds because no logarithmic term appears in one of the main diagonal blocks $a^{J_{h_{\cdot}}}a^{J^*_{h_{\cdot}}}$ with power greater than $2(n-1)$. By \eqref{e:lambdal(a(n)Ha(n)H*)_bounds}, \eqref{e:lambdal(Pdiag(a(n)^h1,...,a(n)^hn)P*)} and taking logs in \eqref{e:lambdal(a(n)HW(2j)a(n)H*)_bounds}, in view of \eqref{e:P(Edelta)->1} we arrive at the consistency relation in \eqref{e:log_lambdaE/2_log_a(n)}. $\Box$\\

\subsection{Asymptotic normality of wavelet log-eigenvalues}

Recall that, throughout this section, we work under the stronger assumption \eqref{e:H_h1<...<hn}. For notational simplicity, we write
\begin{equation}\label{e:Bhat,B}
\widehat{B}_a(2^j) = \Big( \hspace{1mm}\widehat{b}_{ii'}\hspace{1mm} \Big)_{i,i'= 1,\hdots,n}, \quad
B(2^j) = \Big( \hspace{1mm}b_{ii'}\hspace{1mm} \Big)_{i,i'= 1,\hdots,n}
\end{equation}
(see \eqref{e:EW(a(nu)2j)_in_the_case_blindsourcing}). We now establish Proposition \ref{p:convergence}. \\

\noindent {\sc Proof of Proposition \ref{p:convergence}:} In this proof, we will use the Courant-Fischer principle \eqref{e:Courant-Fischer} as applied to real spaces.

We start off with the eigenvalue $\lambda_n(W_a(a2^j))$, whose behavior is the easiest to characterize. From expression \eqref{e:W_a,2j}, note that
$$
0 \leq \frac{\lambda_n(W_a(a2^j))}{a^{2h_n}} = \sup_{u \in S^{n-1}} u^* \frac{W_a(a2^j)}{a^{2h_n}}u \stackrel{P}\rightarrow \sup_{u \in S^{n-1}} u^* P \textnormal{diag}(0,\hdots,0,1) B(2^j)\textnormal{diag}(0,\hdots,0,1)P^* u
$$
\begin{equation}\label{e:u3(nu)->u3}
= b_{nn} \sup_{u \in S^{n-1}} \langle p_{\cdot,n}, u \rangle^2 = b_{nn} \|p_{\cdot,n} \|^2 = b_{nn} > 0.
\end{equation}
Recall that $u_n(\nu) \in S^{n-1}$ denotes an eigenvector of $W_a(a2^j)$ associated with $\lambda_n(W_a(a2^j))$. For every $\nu \in \bbN$, $u_n(\nu) \in \textnormal{argmax}_{u \in S^{n-1}} u^* \frac{W_a(a2^j)}{a^{2h_n}}u$ a.s., and the largest eigenvalue of $\frac{W_a(a2^j)}{a^{2h_n}}$ is the only one not converging to zero. Therefore, \eqref{e:u1,u2,u3_conv} holds, and so does \eqref{e:lambdai/a^(2hi)_conv} for $q = n$ and
$$
\xi_n(2^j) = b_{nn} = b_{nn}(2^j).
$$
Moreover, for $q = n$, statements \eqref{e:u1,u2,u3_conv} and \eqref{e:inner*scaling=o(1)} are equivalent.

Turning to the remaining eigenvalues, in regard to $(iii)$, statement \eqref{e:u1,u2,u3_conv} is a consequence of \eqref{e:|<p3,u2>|a^(h3-h2)=O_P(1)} by considering $q = 1,2,\hdots,n-1$, sequentially. To show $(i)$, fix $q \leq n- 1$ and rewrite
$$
\frac{W_a(a2^j)}{a^{2h_{q}}}= P \Big( \widehat{b}_{i i'}a^{h_{i} - h_{q}}a^{h_{i'} - h_{q}} \Big)_{i,i' = 1,\hdots,n}P^*
$$
$$
= P\left(\begin{array}{ccccccc}
\widehat{b}_{11}a^{2(h_1 - h_{q})} &    \hdots     &   \widehat{b}_{1, q-1}a^{h_{1} - h_{q}}a^{h_{q-1} - h_{q}} &  \widehat{b}_{1, q}a^{h_{1} - h_{q}}  &    0    & \hdots &   0 \\
   \vdots                              &    \ddots     &      \vdots                                                            &          \vdots                                 &  \vdots & \ddots &   \vdots \\
                                     &                 &   \widehat{b}_{q-1, q-1}a^{2(h_{q-1} - h_{q})}           &  \widehat{b}_{q-1, q}a^{h_{q-1} - h_{q}}   &   0     & \hdots &   0 \\
   \widehat{b}_{1,q}a^{h_{1} - h_{q}}  &   \hdots   &     \widehat{b}_{q-1, q}a^{h_{q-1} - h_{q}}                &             \mathbf{0}                                &    0     & \hdots &   0 \\
0                                            &   \hdots   &     0                                                                   &              0                                &    0     & \hdots &   0 \\
\vdots                                        &   \ddots   &     \vdots                                                             &              \vdots                           &   \vdots     & \ddots &   \vdots \\
0                                            &   \hdots   &     0                                                                   &              0                                &    0     & \hdots &   0 \\
\end{array}\right)P^*
$$
$$
+ P \left(\begin{array}{ccccccc}
0     &  \hdots &      0       &            0                  & \widehat{b}_{1, q+1}a^{h_{1} - h_{q}}a^{h_{q+1} - h_{q}}          &     \hdots   & \widehat{b}_{1, n}a^{h_{1} - h_{q}}a^{h_{n} - h_{q}} \\
\vdots &  \ddots     &      \vdots  &           \vdots              &                 \vdots                                                     &     \ddots   &             \vdots                                          \\
0 &  \hdots   &      0       &            0                  & \widehat{b}_{q-1, q+1}a^{h_{q-1} - h_{q}}a^{h_{q+1} - h_{q}} &     \hdots   &  \widehat{b}_{q-1, n}a^{h_{q-1} - h_{q}}a^{h_{n} -h_{q}}\\
0 &  \hdots &   0      & \mathbf{\widehat{b}_{q q} }      & \widehat{b}_{q, q+1}a^{h_{q+1} - h_{q}}                         &    \hdots    & \widehat{b}_{q n} a^{h_n - h_{q}}                            \\
\bullet &  \hdots &   \bullet      & \bullet                       & \widehat{b}_{q +1, q+1} a^{2(h_{q+1} - h_{q})}                  &    \hdots    & \widehat{b}_{q +1, n} a^{h_{q +1} - h_{q}}a^{h_n -h_{q}}\\
\bullet &  \hdots &   \bullet      & \bullet                       & \vdots                                                                     &    \ddots    &     \vdots                                                    \\
\bullet &  \hdots &   \bullet      & \bullet                         & \bullet                                                                     &    \hdots    & \widehat{b}_{n n} a^{2(h_{n} - h_{q})}                  \\
\end{array}\right)P^*
$$
\begin{equation}\label{e:W/a^(2hi0)}
=: P\widehat{{\mathbf S}}_{\nu,q-1} P^* + P\widehat{{\mathbf T}}_{\nu,n -q +1} P^*.
\end{equation}
In \eqref{e:W/a^(2hi0)}, each $\bullet$ entry is generally not identically zero and can be obtained by symmetry, and in both matrices on the right-hand side of \eqref{e:W/a^(2hi0)}, entry $(q,q)$ appears in boldface for ease of visualization. By Weyl's inequality,
\begin{equation}\label{e:Weyl}
\lambda_{q}(P\widehat{{\mathbf S}}_{\nu,q-1} P^*+P\widehat{{\mathbf T}}_{\nu,n -q +1}  P^*) \leq \lambda_n(P\widehat{{\mathbf S}}_{\nu,q-1}P^*)+\lambda_{q}(P\widehat{{\mathbf T}}_{\nu,n -q +1}P^*)
\end{equation}
(Horn and Johnson \cite{horn:johnson:2012}, Theorem 4.3.1, p.\ 239). Since $P\widehat{{\mathbf S}}_{\nu,q-1}P^* \stackrel{P}\rightarrow \textbf{0}$, then \begin{equation}\label{e:lambda3(PAP*)->0_in_prob}
\lambda_n(P\widehat{{\mathbf S}}_{\nu,q-1} P^*) \stackrel{P}\rightarrow 0, \quad \nu \rightarrow \infty.
\end{equation}
%

Now consider the second term on the right-hand side of \eqref{e:Weyl}. Define the matrix
\begin{equation}\label{e:Uhat}
\widehat{{\mathbf U}}_{\nu,n -q +1} = \left(\begin{array}{ccccccc}
0     &  \hdots &      0       &            0                  &           0                                                   &     \hdots                     &                       0 \\
\vdots &  \ddots     &      \vdots  &           \vdots              &                 \vdots                                                     &     \ddots   &             \vdots          \\
0 &  \hdots   &      0       &            0                  &                         0                                         &     \hdots                    &                         0 \\
0 &  \hdots &   0      & \widehat{b}_{q q}       & \widehat{b}_{q, q+1}a^{h_{q+1} - h_{q}}                         &    \hdots    & \widehat{b}_{q n} a^{h_n - h_{q}}                            \\
0      &  \hdots &   0      & \bullet                       & \widehat{b}_{q +1, q+1} a^{2(h_{q+1} - h_{q})}                  &    \hdots    & \widehat{b}_{q +1, n} a^{h_{q +1} - h_{q}}a^{h_n -h_{q}}\\
\vdots &  \ddots &   \vdots     & \bullet                       & \vdots                                                                     &    \ddots    &     \vdots                                                    \\
0      &  \hdots &   0      & \bullet                         & \bullet                                                                     &    \hdots    & \widehat{b}_{n n} a^{2(h_{n} - h_{q})}                  \\
\end{array}\right).
\end{equation}
Let
\begin{equation}\label{e:u-bf(nu),u-bf'(nu)}
\textnormal{${\mathbf u}_{q}(\nu)$ and ${\mathbf u}'_{q}(\nu)$}
\end{equation}
be unit eigenvectors associated with $\lambda_{q}(P\widehat{{\mathbf T}}_{\nu,n -q +1} P^*)$ and $\lambda_{q}(P\widehat{{\mathbf U}}_{\nu,n -q +1} P^*)$, respectively. As a consequence of \eqref{e:|<p3,u2>|a^(h3-h2)=O_P(1)} in Lemma \ref{l:<p3,u2>a(nu)^(h_3-h_2)=OP(1)} applied to $({\mathbf W}_{\nu}/a^{2h_q} = )P\widehat{{\mathbf T}}_{\nu,n -q +1} P^*$ and $({\mathbf W}_{\nu}/a^{2h_q} = )P\widehat{{\mathbf U}}_{\nu,n -q +1} P^*$, for any $\gamma > 0$ there exists $\eta_{\gamma} > 0 $ such that
\begin{equation}\label{e:P(|<p3,u2>a^(h3-h2)|=<eta)>=1-xi}
P(C_{\gamma,\nu} ) \geq 1 - \gamma, \quad \nu \in \bbN,
\end{equation}
where
\begin{equation}\label{e:Cgamma,nu}
C_{\gamma,\nu} = \Big\{ \max_{i=q+1,\hdots,n}\Big(\max\{|\langle p_{\cdot,i}, {\mathbf u}_{q}(\nu)\rangle| a^{h_i - h_{q}}, |\langle p_{\cdot,i}, {\mathbf u}'_{q}(\nu)\rangle | a^{h_i - h_{q}} \}\Big)\leq \eta_{\gamma} \Big\}.
\end{equation}
Moreover,
$$
\widehat{b}_{i,q} \langle p_{\cdot,i}, {\mathbf u}_{q}(\nu)\rangle a^{h_i - h_{q}} \stackrel{P}\rightarrow 0, \quad i = 1,\hdots,q -1.
$$
Therefore, for some constant $C > 0$, with probability going to 1,
$$
\Big|\sum^{q-1}_{i= 1} \sum^{n}_{i'= q + 1}  \widehat{b}_{i i'}\langle p_{\cdot,i}, {\mathbf u}_{q}(\nu) \rangle a^{h_i-h_{q}}\langle p_{\cdot,i'},{\mathbf u}_{q}(\nu) \rangle a^{h_{i'}-h_{q}} \Big| \leq \frac{C}{a^{h_q - h_{q-1}}}.
$$
Hence, in the set $C_{\gamma,\nu}$,
$$
\lambda_{q}(P\widehat{{\mathbf T}}_{\nu,n- q + 1} P^*) = \sup_{{\mathcal U}_{n - q+1}}\inf_{u \in {\mathcal U}_{n-q+1} \cap S^{n-1}} u^* P \widehat{{\mathbf T}}_{\nu,n- q + 1} P^* u
$$
$$
= \sup_{{\mathcal U}_{n-q+1}}\inf_{\substack{u \in {\mathcal U}_{n-q+1} \cap S^{n-1} \\ \max_{i=q+1,\hdots,n} |\langle p_{\cdot,i}, u\rangle a^{h_i - h_{q}}| \leq \eta_{\gamma}}} u^* P \widehat{{\mathbf T}}_{\nu,n- q + 1} P^* u
$$
$$
= \sup_{{\mathcal U}_{n-q+1}}\inf_{\substack{u \in {\mathcal U}_{n-q+1} \cap S^{n-1} \\ \max_{i=q+1,\hdots,n} |\langle p_{\cdot,i}, u \rangle a^{h_i - h_{q}}| \leq \eta_{\gamma}} }
\Big[ \sum^{n}_{i=q}\widehat{b}_{ii}\langle p_{\cdot,i},u \rangle^2 a^{2(h_i-h_{q})}
$$
$$
+ 2 \sum^{q-1}_{i= 1} \sum^{n}_{i'= q + 1}  \widehat{b}_{i i'}\langle p_{\cdot,i}, u \rangle a^{h_i-h_{q}}\langle p_{\cdot,i'},u \rangle a^{h_{i'}-h_{q}}
+ 2 \sum_{q \leq i < i' \leq n }  \widehat{b}_{i i'}\langle p_{\cdot,i},u\rangle a^{h_i-h_{q}}\langle p_{\cdot,i'},u \rangle a^{h_{i'}-h_{q}} \Big]
$$
$$
\leq \sup_{{\mathcal U}_{n-q+1}}\inf_{\substack{u \in {\mathcal U}_{n-q+1} \cap S^{n-1} \\ \max_{i=q+1,\hdots,n} |\langle p_{\cdot,i}, u \rangle a^{h_i - h_{q}}| \leq \eta_{\gamma}} } \Big[ \sum^{n}_{i=q}\widehat{b}_{ii}\langle p_{\cdot,i},u \rangle^2 a^{2(h_i-h_{q})}
$$
$$
+ \frac{(C + o_P(1))}{a^{h_q - h_{q-1}}}
+ 2 \sum_{q \leq i < i' \leq n }  \widehat{b}_{i i'}\langle p_{\cdot,i},u \rangle a^{h_i-h_{q}}\langle p_{\cdot,i'},u \rangle a^{h_{i'}-h_{q}} \Big]
$$
\begin{equation}\label{e:lambda2(PTP*)_bound}
= \sup_{{\mathcal U}_{n-q+1}}\inf_{u \in {\mathcal U}_{n-q+1} \cap S^{n-1}} u^* P \widehat{{\mathbf U}}_{\nu,n - q + 1} P^* u  + \frac{(C + o_P(1))}{a^{h_q - h_{q-1}}},
\end{equation}
where the inequality holds for large enough $\nu$ and the last equality is a consequence of \eqref{e:Cgamma,nu}.

Turning to the matrix $P\widehat{{\mathbf U}}_{\nu,n - q + 1}P^* \in {\mathcal  H}_{\geq 0}(n,\bbR)$, it is clear that
$$
\Big\{v \in \bbR^n: v \in \{p_{\cdot,q},p_{\cdot,q+1}, \hdots,p_{\cdot,n}\}^{\perp} \Big\}
$$
is the real $(q-1)$-dimensional eigenspace associated with the zero eigenvalues of $P\widehat{{\mathbf U}}_{\nu,n - q + 1} P^*$, i.e., with $\lambda_i(P\widehat{{\mathbf U}}_{\nu,n - q + 1} P^*)$, $i = 1,\hdots,q - 1$. Therefore,
\begin{equation}\label{e:lambda2(PBP*)}
\lambda_{q}(P\widehat{{\mathbf U}}_{\nu,n - q + 1} P^*) = \inf_{u \in \textnormal{span}\{p_{\cdot,q},\hdots,p_{\cdot,n}\} \cap S^{n-1}}u^* P\widehat{{\mathbf U}}_{\nu,n - q + 1} P^* u.
\end{equation}
Let
$$
\widehat{{\mathbf x}}_{q,*}(2^j) = \widehat{{\mathbf x}}_{q,*} = (\widehat{x}_{q+1,*},\hdots,\widehat{x}_{n,*}), \quad {\mathbf x}_{q,*}(2^j) = {\mathbf x}_{q,*}
$$
be the global minima of the functions $\widehat{g}_{\nu,q,j}$ and $g_{q,j}$ as in \eqref{e:x*_sol} and \eqref{e:x*_sol_determ}, respectively. Consider a sequence of vectors
\begin{equation}\label{e:w(nu)_in_span(pi0,...,pn)}
w(\nu) \in \textnormal{span}\{p_{\cdot,q},\hdots,p_{\cdot,n}\}  \cap S^{n-1}
\end{equation}
such that
$$
\langle p_{\cdot,i},w(\nu)\rangle = \frac{\widehat{x}_{i,*}}{a^{h_i - h_{q}}} \in (-1,1), \quad i = q+1,\hdots,n,
$$
which is possible for large enough $\nu$. In particular, the distance between $w(\nu)$ and the subspace $\{p_{\cdot,q+1},\hdots,p_{\cdot,n}\}^{\perp}$ goes to zero. This implies that, without loss of generality, we can choose the sequence $w(\nu)$ so that
\begin{equation}
w(\nu) \stackrel{P}\rightarrow u_{q} \in \textnormal{span}\{p_{\cdot,q},\hdots,p_{\cdot,n}\} \cap \{p_{\cdot,q+1},\hdots,p_{\cdot,n}\}^{\perp} \cap S^{n-1},
\end{equation}
where $u_{q}$ is given by \eqref{e:u1,u2,u3_conv}. Let $\{u_{q}(\nu)\}_{\nu \in \bbN}$ be a sequence of eigenvectors (of $W_a(a2^j)$) as in \eqref{e:u1,u2,u3_conv}. Then, by \eqref{e:lambda2(PBP*)} and \eqref{e:w(nu)_in_span(pi0,...,pn)},
$$
\lambda_{q}(P\widehat{{\mathbf U}}_{\nu,n - q + 1} P^*) \leq w^*(\nu) P\widehat{{\mathbf U}}_{\nu,n - q + 1} P^* w(\nu)
$$
$$
= \sum^{n}_{i=q} \widehat{b}_{ii}\langle p_{\cdot,i},w(\nu)\rangle^2 a^{2(h_i - h_q)}
+ 2 \sum_{q \leq i < i' \leq n}\widehat{b}_{ii'} \langle p_{\cdot,i},w(\nu)\rangle a^{h_i - h_{q}}\langle p_{\cdot,i'},w(\nu)\rangle a^{h_{i'} - h_{q}}
$$
$$
=\widehat{b}_{q q}\langle p_{\cdot,q},u_{q}(\nu)\rangle^2 + \sum^{n}_{i=q+1} \widehat{b}_{ii}\langle p_{\cdot,i},w(\nu)\rangle^2 a^{2(h_i - h_{q})}
+ 2 \sum_{q +1\leq i \leq n}\widehat{b}_{q i} \langle p_{\cdot,q},u_{q}(\nu)\rangle \langle p_{\cdot,i},w(\nu)\rangle a^{h_{i} - h_{q}}
$$
$$
+ 2 \sum_{q +1\leq i < i' \leq n}\widehat{b}_{ii'} \langle p_{\cdot,i},w(\nu)\rangle a^{h_i - h_{q}}\langle p_{\cdot,i'},w(\nu)\rangle a^{h_{i'} - h_{q}}
$$
$$
+ \Big\{\widehat{b}_{q q}\Big[\langle p_{\cdot,q},w(\nu)\rangle^2 - \langle p_{\cdot,q},u_{q}(\nu)\rangle^2 \Big] + 2 \sum_{q +1\leq i \leq n}\widehat{b}_{q i} \Big[\langle p_{\cdot,q},w(\nu)\rangle- \langle p_{\cdot,q},u_{q}(\nu)\rangle \Big]\langle p_{\cdot,i},w(\nu)\rangle a^{h_{i} - h_{q}} \Big\}
$$
\begin{equation}\label{e:lambda2(P(A+B)P*)_upper_bound}
= \widehat{g}_{\nu,q,j} (\widehat{{\mathbf x}}_{q,*}) + o_P(1).
\end{equation}
On the other hand, since $\widehat{x}_{q,*}$ is the global minimum of the function $\widehat{g}_{\nu,j,q}$,
\begin{equation}\label{e:infu*PUP*u}
\widehat{g}_{\nu,q,j} (\widehat{{\mathbf x}}_{q,*}) \leq
\lambda_{q}\Big( \frac{W_a(a2^j)}{a^{2h_q}}\Big)=\lambda_q(P(\widehat{{\mathbf S}}_{\nu,q -1} +\widehat{{\mathbf T}}_{\nu,n-q + 1} )P^*).
\end{equation}
From \eqref{e:Weyl}, \eqref{e:lambda3(PAP*)->0_in_prob}, \eqref{e:lambda2(PTP*)_bound}, \eqref{e:lambda2(P(A+B)P*)_upper_bound} and \eqref{e:infu*PUP*u},
\begin{equation}\label{e:g-nu(delta-*)=<lambda2=<oP(1)+g-nu(delta*)}
\widehat{g}_{\nu,q,j} (\widehat{{\mathbf x}}_{q,*})\leq \lambda_{q}(P(\widehat{{\mathbf S}}_{\nu,q - 1} +\widehat{{\mathbf T}}_{\nu,n- q + 1} )P^*)
\leq  \widehat{g}_{\nu,q,j} (\widehat{{\mathbf x}}_{q,*}) + o_P(1)
\end{equation}
in the set $C_{\xi,\nu}$, where
$$
\widehat{g}_{\nu,q,j} (\widehat{{\mathbf x}}_{q,*})  \stackrel{P}\rightarrow g_{q,j}({\mathbf x}_{q,*}), \quad \nu \rightarrow \infty.
$$
Consequently, for any $\varepsilon > 0$ and large enough $\nu$,
\begin{equation}\label{e:lambda2->g(delta*)}
P(|\lambda_{q}(P(\widehat{{\mathbf S}}_{\nu,q - 1} +\widehat{{\mathbf T}}_{\nu,n - q +1} )P^*) - g_{q,j}({\mathbf x}_{q,*})| \geq  \varepsilon ) \leq \gamma.
\end{equation}
Since $\gamma > 0$ is arbitrary,
\begin{equation}\label{e:lambdai0->xi0*_in_prob}
\lambda_{q}(P(\widehat{{\mathbf S}}_{\nu,q - 1} +\widehat{{\mathbf T}}_{\nu,n - q +1} )P^*) \stackrel{P}\rightarrow g_{q,j}({\mathbf x}_{q,*}), \quad \nu \rightarrow \infty.
\end{equation}
This establishes \eqref{e:lambdai/a^(2hi)_conv} for $i = q$ with
\begin{equation}\label{e:ksi_i0=g_i0(x*)}
\xi_{q}(2^j) = g_{q,j}({\mathbf x}_{q,*}) = g_{q,j}({\mathbf x}_{q,*}(2^j)).
\end{equation}

To show $(iv)$, consider any subsequence $\nu' \in \bbN'$ of
$$
\Big\{ \Big(\langle p_{\cdot,q+1},u_{q}(\nu) \rangle a^{h_{q+1} - h_{q}}, \hdots, \langle p_{\cdot,n},u_{q}(\nu) \rangle a^{h_n - h_{q}} \Big)\Big\}_{\nu \in \bbN}.
$$
We will show that there is a further subsequence $\nu'' \in \bbN''$ such that
\begin{equation}\label{e:<p3,u2>(nu'')a^(h3-h2)->delta*}
\Big(\langle p_{\cdot,q+1},u_{q}(\nu'') \rangle a^{h_{q+1} - h_{q}}, \hdots, \langle p_{\cdot,n},u_{q}(\nu'') \rangle a^{h_n - h_{q}} \Big) \stackrel{P}\rightarrow {\mathbf x}_{q,*}, \quad \nu'' \rightarrow \infty,
\end{equation}
where ${\mathbf x}_{q,*}= {\mathbf x}_{q,*}(2^j)$ is given by \eqref{e:x*_sol_determ}. This, in turn, implies \eqref{e:inner*scaling=o(1)}.

In fact, \eqref{e:lambdai/a^(2hi)_conv} and \eqref{e:ksi_i0=g_i0(x*)} imply that there is a further subsequence $\nu'' \in \bbN''$ such that
\begin{equation}\label{e:lambda_i0(nu'')_conv}
\frac{\lambda_{q}(W_a(a2^j))}{a^{2 h_{q}}} \rightarrow g_{q,j}({\mathbf x}_{q,*}) \quad \textnormal{a.s.}, \quad \nu'' \rightarrow \infty.
\end{equation}
Let $\{u_{q}(\nu)\}_{\nu \in \bbN}$ be a sequence of eigenvectors (of $W_a(a2^j)$) satisfying \eqref{e:u1,u2,u3_conv}. The subsequence $\{ \langle p_{\cdot,q+1},u_{q}(\nu'') \rangle a^{h_{q+1} - h_{q}}, \hdots, \langle p_{\cdot,n},u_{q}(\nu'') \rangle a^{h_n - h_{q}} \}_{\nu''  \in \bbN'' }
\subseteq \bbR^{n - q}$ is bounded a.s., which can be shown by an adaptation of the proof of Lemma \ref{l:<p3,u2>a(nu)^(h_3-h_2)=OP(1)}. Therefore, we may assume without loss of generality that there is some ${\mathbf x}_{q,**} = {\mathbf x}_{q,**}(\omega) \in \bbR$ such that
$$
\Big(\langle p_{\cdot,q+1},u_{q}(\nu'') \rangle a^{h_{q+1} - h_{q}}, \hdots, \langle p_{\cdot,n},u_{q}(\nu'') \rangle a^{h_n - h_{q}} \Big)\rightarrow {\mathbf x}_{q,**} \quad \textnormal{a.s.}, \quad \nu'' \rightarrow \infty.
$$
Consequently,
$$
u_{q}(\nu'')^* \frac{W_a(a2^j)}{a^{2 h_{q}}}u_{q}(\nu'')
$$
$$
\rightarrow b_{q q} \langle p_{\cdot,q},u_{q}\rangle^2  + \sum^{n}_{i=q + 1} b_{ii}x_{i,**} + 2 \langle p_{\cdot,q},u_{q}\rangle  \sum^{n}_{i = q + 1}b_{q i} x_{i,**}
+ 2 \sum_{q \leq i < i' \leq n} b_{ii'} x_{i,**} x_{i',**}
$$
$$
= g_{q,j}({\mathbf x}_{q,**}) \quad \textnormal{a.s.}, \quad \nu'' \rightarrow \infty.
$$
In view of \eqref{e:lambda_i0(nu'')_conv}, $g_{q,j}({\mathbf x}_{q,*}) = g_{q,j}({\mathbf x}_{q,**}) $. Since ${\mathbf x}_{q,*}$ is the unique global minimum of $g_{q,j}$,
$$
{\mathbf x}_{q,**} = {\mathbf x}_{q,*}.
$$
This shows \eqref{e:<p3,u2>(nu'')a^(h3-h2)->delta*} (and thus, also \eqref{e:inner*scaling=o(1)}).

It only remains to show ($ii$). First recall that the limiting matrix $B(2^j)$ satisfies the entrywise scaling relation \eqref{e:B(2^j)_entrywise_scaling}.
Therefore, the function $g_{q,j}$ in \eqref{e:g-i0} can be rewritten as
$$
g_{q,j}(x_{q +1}, \hdots, x_n)
$$
$$
= b_{q q}(2^j) \langle p_{\cdot,q},u_{q}\rangle^2
+ \sum^{n}_{i=q + 1}b_{ii}(2^j)x^{2}_{i} + 2 \langle p_{\cdot,q},u_{q}\rangle \sum^{n}_{i= q + 1}b_{q ,i}(2^j) x_i
+ 2 \sum_{q + 1 \leq i < i' \leq n} b_{ii'}(2^j)x_{i}x_{i'}
$$
$$
= 2^{j2h_{q }} \Big\{b_{q q}(1) \langle p_{\cdot,q},u_{q}\rangle^2
+ \sum^{n}_{i=q + 1}2^{j2(h_{i}-h_{q})} b_{ii}(1)x^{2}_{i} + 2 \langle p_{\cdot,q},u_{q}\rangle \sum^{n}_{i= q + 1}2^{j(h_i - h_{q})}b_{qi}(1) x_i
$$
$$
+ 2 \sum_{q + 1 \leq i < i' \leq n} 2^{j(h_i - h_{q})} 2^{j(h_{i'} - h_{q})}b_{ii'}(1)x_{i}x_{i'} \Big\}
$$
$$
= 2^{j2h_{q }} \Big\{b_{q q}(1) \langle p_{\cdot,q},u_{q}\rangle^2
+ \sum^{n}_{i=q + 1}b_{ii}(1)y^{2}_{i} + 2 \langle p_{\cdot,q},u_{q}\rangle \sum^{n}_{i= q + 1}b_{qi}(1) y_i
+ 2 \sum_{q + 1 \leq i < i' \leq n} b_{ii'}(1)y_{i}y_{i'} \Big\}
$$
$$
= 2^{j2h_{q }} g_{q,0}(y_{q +1}, \hdots, y_n),
$$
where
\begin{equation}\label{e:yi=2^j(h-hi0)xi}
y_i := 2^{j(h_i - h_{q})}x_i, \quad i = q + 1,\hdots,n.
\end{equation}
Since the relation \eqref{e:yi=2^j(h-hi0)xi} is isomorphic, minimizing the function $g_{q,j}$ over $\bbR^{n-q + 1}$ is equivalent to minimizing the function $g_{q,0}$ again over $\bbR^{n-q + 1}$, where the latter function does not depend on $j$. Since $\xi_{q}(2^j)$ and $\xi_{q}(1)$ correspond to the values attained by the functions $g_{q,j}$ and $g_{q,0}$ at their minima, respectively, relation \eqref{e:xi-i0_scales} holds. $\Box$\\

\begin{lemma}\label{l:gtilde_g_min}
Fix $q \in \{1,\hdots,n-1\}$ and let $\{u_{q}(\nu)\}_{\nu \in \bbN}$, $u_{q}$ be, respectively, a sequence of eigenvectors associated with $\lambda_{q}(W_a(a(\nu) 2^j))$ and its limit in probability as in \eqref{e:u1,u2,u3_conv}. Let $\widehat{g}_{\nu,q,j}, g_{q,j}: \bbR^{n - q} \rightarrow \bbR$ be the random and deterministic functions, respectively, defined by
$$
\widehat{g}_{\nu,q,j}(x_{q +1}, \hdots, x_n)
 = \widehat{b}_{q q}(2^j) \langle p_{\cdot,q},u_{q}(\nu)\rangle^2
+ \sum^{n}_{i=q + 1}\widehat{b}_{ii}(2^j)x^{2}_{i}
$$
\begin{equation}\label{e:g-tilde-nu,i0}
+ 2 \langle p_{\cdot,q},u_{q}(\nu)\rangle \sum^{n}_{i= q + 1}\widehat{b}_{q i}(2^j)x_i
+ 2 \sum_{q + 1 \leq i < i' \leq n} \widehat{b}_{ii'}(2^j)x_{i}x_{i'} + \widehat{r}_{\nu,q,j}(x_{q+1},\hdots,x_n)
\end{equation}
and
$$
g_{q,j}(x_{q +1}, \hdots, x_n) = b_{q q}(2^j) \langle p_{\cdot,q},u_{q}\rangle^2
+ \sum^{n}_{i=q + 1}b_{ii}(2^j)x^{2}_{i}
$$
\begin{equation}\label{e:g-i0}
+ 2 \langle p_{\cdot,q},u_{q}\rangle \sum^{n}_{i= q + 1}b_{q i}(2^j)x_i
+ 2 \sum_{q + 1 \leq i < i' \leq n} b_{ii'}(2^j)x_{i}x_{i'},
\end{equation}
where the residual function in \eqref{e:g-tilde-nu,i0} is given by
$$
\widehat{r}_{\nu,q,j}(x_{q+1},\hdots,x_n) = \sum^{q-1}_{i=1}\widehat{b}_{ii}(2^j)\langle p_{\cdot,i},u_q(\nu)\rangle^2 a(\nu)^{2(h_i - h_q)}
$$
$$
+ 2 \langle p_{\cdot,q},u_q(\nu)\rangle\sum^{q-1}_{i=1}\widehat{b}_{iq}(2^j)\langle p_{\cdot,i},u_q(\nu)\rangle a(\nu)^{h_i - h_q}
+ 2 \sum^{q-1}_{i=1}\sum^{n}_{i'=q+1}\widehat{b}_{ii'}(2^j)\langle p_{\cdot,i},u_q(\nu)\rangle a(\nu)^{h_i - h_q}x_{i'}.
$$
Then, each function $\widehat{g}_{\nu,q,j}$ and $g_{q,j}$ has a unique global minimum.
\end{lemma}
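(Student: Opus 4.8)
The plan is to recognize that, modulo an affine correction, both $\widehat{g}_{\nu,q,j}$ and $g_{q,j}$ are inhomogeneous quadratic functions on $\bbR^{n-q}$ whose common quadratic part is a principal submatrix of a positive definite matrix; uniqueness of the global minimum then follows from strict convexity. First I would sort the terms of $\widehat g_{\nu,q,j}$ in \eqref{e:g-tilde-nu,i0} by their degree in $x=(x_{q+1},\hdots,x_n)$. The residual $\widehat r_{\nu,q,j}$ is affine in $x$: its first two summands are free of $x$ and its third summand is linear in $x$, so $\widehat r_{\nu,q,j}$ contributes nothing to the quadratic part. Hence one can write
$$
\widehat g_{\nu,q,j}(x) = x^{*}\widehat Q_{\nu,j}\,x + \widehat\ell_{\nu}^{\,*}x + \widehat c_{\nu}, \qquad \widehat Q_{\nu,j} = \big(\widehat b_{ii'}(2^j)\big)_{i,i'=q+1,\hdots,n},
$$
and likewise $g_{q,j}(x)=x^{*}Q_{j}x+\ell^{*}x+c$ with $Q_{j}=(b_{ii'}(2^j))_{i,i'=q+1,\hdots,n}$. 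Here $\widehat Q_{\nu,j}$ and $Q_{j}$ are the trailing $(n-q)\times(n-q)$ principal submatrices of $\widehat B_a(2^j)=P^{-1}W_a(2^j)(P^{*})^{-1}$ and $B(2^j)=P^{-1}\bbE W(2^j)(P^{*})^{-1}$, respectively (see \eqref{e:Bhat_a(2^j)_B(2^j)}).

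Next I would establish that both quadratic forms are positive definite (the hatted one on an event of probability one). Under (OFBM1) the OFBM is proper, so its wavelet variance matrices are positive definite, i.e.\ $\bbE W(2^j)\in{\mathcal H}_{>0}(n,\bbR)$; moreover $W_a(2^j)=\frac{1}{K}\sum_{k}D(2^j,k)D(2^j,k)^{*}$ is a.s.\ positive definite once $\nu$ is large enough that $K\geq n$, because the jointly Gaussian vectors $D(2^j,1),\hdots,D(2^j,n)$ have a nondegenerate law in $\bbR^{n^2}$ (a consequence of properness) and therefore $\det\big[D(2^j,1)\,|\,\cdots\,|\,D(2^j,n)\big]=0$ with probability zero. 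Since $P\in GL(n,\bbR)$ under (OFBM3$'$), the matrices $B(2^j)$ and $\widehat B_a(2^j)$ are congruent to $\bbE W(2^j)$ and $W_a(2^j)$, hence positive definite (a.s.), and therefore so are their principal submatrices $Q_{j}$ and $\widehat Q_{\nu,j}$ (Horn and Johnson \cite{horn:johnson:2012}).

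It then remains to invoke the elementary fact that an inhomogeneous quadratic function $x\mapsto x^{*}Qx+\ell^{*}x+c$ with $Q$ symmetric positive definite is strictly convex and coercive, hence attains its infimum at the unique point $x_{*}=-\tfrac12 Q^{-1}\ell$. Applying this to $g_{q,j}$ with $Q=Q_{j}$, and to $\widehat g_{\nu,q,j}$ with $Q=\widehat Q_{\nu,j}$ on the a.s.\ event that this matrix is positive definite, yields the claim. The only genuinely nontrivial ingredient is the a.s.\ positive definiteness of $W_a(2^j)$ — that is, the Gaussian nondegeneracy argument together with the requirement that the number of wavelet coefficients at the scale under consideration exceed $n$; everything else is bookkeeping in separating degrees and the standard fact about convex quadratics.
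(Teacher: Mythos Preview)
Your proposal is correct and follows essentially the same route as the paper: both arguments recognize $\widehat g_{\nu,q,j}$ and $g_{q,j}$ as inhomogeneous quadratics in $x=(x_{q+1},\hdots,x_n)$ whose quadratic part is the trailing principal submatrix $\big(\widehat b_{ii'}\big)_{i,i'=q+1,\hdots,n}$ (resp.\ $\big(b_{ii'}\big)$), observe that the residual $\widehat r_{\nu,q,j}$ is affine in $x$, and conclude uniqueness of the minimizer from positive definiteness of this submatrix---the paper phrases the last step via first-order conditions and the Hessian, you via strict convexity, which is the same thing. Your justification of the a.s.\ positive definiteness is more explicit than the paper's (which simply asserts it); note, however, that properness by itself yields nondegeneracy of each $D(2^j,k)$ in $\bbR^n$, not of the joint law in $\bbR^{n^2}$, so the cleanest way to finish that step is to observe that $\det[D(2^j,1)\,|\cdots|\,D(2^j,n)]$ is a polynomial in a Gaussian vector and hence vanishes either a.s.\ or with probability zero, and rule out the former since $\bbE W_a(2^j)=\bbE W(2^j)\in{\mathcal H}_{>0}(n,\bbR)$.
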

\begin{proof}
We only establish the claim for $\widehat{g}_{\nu,q,j}$, since the argument for $g_{q,j}$ is essentially identical. We will drop the factor $2^j$ for notational simplicity.

The first order conditions for the minimization of $\widehat{g}_{\nu,q,j}$ give the matrix system
$$
\Big( \hspace{1mm}\widehat{b}_{ii'}\hspace{1mm}\Big)_{i,i'= q+1,\hdots,n} (x_{q+1},\hdots,x_n)^*
$$
\begin{equation}\label{e:system_x*_sol}
= - (\widehat{b}_{q,q + 1},\hdots,\widehat{b}_{q,n})^* \langle p_{\cdot,q},u_{q}(\nu)\rangle - \nabla^*_{q+1,\hdots,n}\widehat{r}_{\nu,q,j}(x_{q+1},\hdots,x_n),
\end{equation}
where $\nabla_{q+1,\hdots,n}$ denotes the gradient with respect to the vector $x_{q+1},\hdots,x_n$. Note that $\nabla^*_{q+1,\hdots,n}\widehat{r}_{\nu,q,j}(x_{q+1},\hdots,x_n)$ is constant. Since the matrix $\Big( \hspace{1mm}\widehat{b}_{ii'}\hspace{1mm}\Big)_{i,i'= q+1,\hdots,n}$ is nonsingular a.s., a solution $\widehat{{\mathbf x}}_{q,*}$ to \eqref{e:system_x*_sol} always exists. Moreover, the Hessian matrix is given by
$$
\Big(\hspace{0.5mm} \frac{\partial^2}{\partial x_{i}\partial x_{i'}}\widehat{g}_{\nu,q,j}\hspace{0.5mm}\Big)_{i,i'= q+1,\hdots,n} = 2 \Big(\hspace{1mm} \widehat{b}_{ii'}\hspace{1mm}\Big)_{i,i'= q+1,\hdots,n}
$$
which is symmetric positive definite a.s. Therefore, the solution to \eqref{e:system_x*_sol} is the unique global minimum of $\widehat{g}_{\nu,q,j}$. $\Box$\\
\end{proof}

In proofs, the global minima of $\widehat{g}_{\nu,q,j}$ and $g_{q,j}$ provided in Lemma \ref{l:gtilde_g_min} will be denoted by
\begin{equation}\label{e:x*_sol}
\widehat{{\mathbf x}}_{q,*}(2^j) =\widehat{{\mathbf x}}_{q,*} = (\widehat{x}_{q+1,*},\hdots,\widehat{x}_{n,*}) \quad \textnormal{a.s.}
\end{equation}
and
\begin{equation}\label{e:x*_sol_determ}
{\mathbf x}_{q,*}(2^j) = {\mathbf x}_{q,*} = (x_{q+1,*},\hdots, x_{n,*}),
\end{equation}
respectively.\\

\begin{lemma}\label{l:<p3,u2>a(nu)^(h_3-h_2)=OP(1)}
Let $\{\textbf{B}_\nu\}_{\nu \in \bbN}$,  $\textbf{B}_\nu = \Big(\widehat{b}_{i_1 i_2}\Big)_{i_1,i_2=1,\hdots,n}$, be a sequence of symmetric, and not necessarily positive semidefinite, random matrices such that
$\textbf{B}_\nu \stackrel{P}\rightarrow \textbf{B}$, as $\nu \rightarrow \infty$, where $\textbf{B} = \Big(b_{i_1 i_2}\Big)_{i_1,i_2=1,\hdots,n}$ is deterministic. Consider $h_1,\hdots,h_n$ and $P$ as in \eqref{e:H_h1<...<hn}. In addition, for a fixed $q \in \{ 1,\hdots,n-1 \}$, assume that
\begin{equation}\label{e:bii>0,i=q+1,...n}
b_{i i} > 0, \quad  i = q +1,\hdots,n.
\end{equation}
Let
$$
\textbf{W}_\nu
= P \textnormal{diag}(a(\nu)^{h_1},\hdots,a(\nu)^{h_n})\left(\begin{array}{ccc}
\widehat{b}_{11} & \hdots & \widehat{b}_{1n}\\
\vdots  & \ddots & \vdots \\
\widehat{b}_{1n} & \hdots & \widehat{b}_{nn}\\
\end{array}\right)\textnormal{diag}(a(\nu)^{h_1},\hdots,a(\nu)^{h_n})P^*
$$
and let $u_{q}(v)$ be a unit eigenvector associated with $\lambda_{q}(\textbf{W}_\nu)$. Then,
\begin{equation}\label{e:|<p3,u2>|a^(h3-h2)=O_P(1)}
\Big\{ |\langle p_{\cdot,i},u_{q}(\nu) \rangle| a(\nu)^{h_i-h_{q}} \Big\}_{\nu \in \bbN} = O_P(1), \quad i = q + 1,\hdots,n.
\end{equation}
\end{lemma}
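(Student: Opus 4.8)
The plan is to combine the Rayleigh--quotient identity $\lambda_{q}(\textbf{W}_\nu)=u_{q}(\nu)^{*}\textbf{W}_\nu u_{q}(\nu)$ with (a) an a priori upper bound on $\lambda_{q}(\textbf{W}_\nu)$ and (b) a coercivity bound on the lower--right block of $\textbf{B}_\nu$. Throughout write $D=\textnormal{diag}(a(\nu)^{h_{1}},\hdots,a(\nu)^{h_{n}})$ and put $\zeta(\nu)=DP^{*}u_{q}(\nu)\in\bbR^{n}$, so that $\zeta_{i}(\nu)=a(\nu)^{h_{i}}\langle p_{\cdot,i},u_{q}(\nu)\rangle$ and $u^{*}\textbf{W}_\nu u=(DP^{*}u)^{*}\textbf{B}_\nu(DP^{*}u)$ for every $u$. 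Splitting $\zeta(\nu)=(\zeta_{\textnormal{head}}(\nu),\zeta_{\textnormal{tail}}(\nu))$ along the index partition $\{1,\hdots,q\}\cup\{q+1,\hdots,n\}$, the assertion \eqref{e:|<p3,u2>|a^(h3-h2)=O_P(1)} is exactly $\|\zeta_{\textnormal{tail}}(\nu)\|=O_{P}(a(\nu)^{h_{q}})$.

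First I would prove the a priori bound $\lambda_{q}(\textbf{W}_\nu)\le O_{P}(a(\nu)^{2h_{q}})$. Since $P$ is nonsingular, the columns $p_{\cdot,q+1},\hdots,p_{\cdot,n}$ are linearly independent, so $\mathcal{U}_{0}:=\{p_{\cdot,q+1},\hdots,p_{\cdot,n}\}^{\perp}$ is a $q$--dimensional subspace; for $u\in\mathcal{U}_{0}\cap S^{n-1}$ the vector $v=P^{*}u$ has $v_{i}=0$ for $i>q$, whence $u^{*}\textbf{W}_\nu u=\sum_{i,i'\le q}a(\nu)^{h_{i}+h_{i'}}v_{i}\widehat{b}_{ii'}v_{i'}$ is bounded in absolute value by $a(\nu)^{2h_{q}}\,q\,(\max_{i,i'}|\widehat{b}_{ii'}|)\,\|v\|^{2}$, because $h_{i}+h_{i'}\le 2h_{q}$ there; this is $O_{P}(a(\nu)^{2h_{q}})$ by $\textbf{B}_\nu\stackrel{P}\rightarrow\textbf{B}$ and the nonsingularity of $P$ (cf.\ \eqref{e:C1=<|Pu|2=<C2}). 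Applying the Courant--Fischer principle \eqref{e:Courant-Fischer} (over real spaces) with the test subspace $\mathcal{U}_{0}$ gives $\lambda_{q}(\textbf{W}_\nu)\le\sup_{u\in\mathcal{U}_{0}\cap S^{n-1}}u^{*}\textbf{W}_\nu u=O_{P}(a(\nu)^{2h_{q}})$. Moreover, since $\|p_{\cdot,i}\|=1$ by \eqref{e:H_Jordan} and $\|u_{q}(\nu)\|=1$, one has $|\zeta_{i}(\nu)|\le a(\nu)^{h_{i}}\le a(\nu)^{h_{q}}$ for every $i\le q$, so $\|\zeta_{\textnormal{head}}(\nu)\|\le\sqrt{q}\,a(\nu)^{h_{q}}$ deterministically.

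Second I would exploit the tail block. Let $\textbf{B}^{hh}_{\nu},\textbf{B}^{ht}_{\nu},\textbf{B}^{tt}_{\nu}$ be the $(q\times q)$, $(q\times(n-q))$ and $((n-q)\times(n-q))$ blocks of $\textbf{B}_\nu$ for the above partition, and $\textbf{B}^{tt}$ the corresponding block of $\textbf{B}$. The structural input used here is that $\textbf{B}^{tt}$ is positive definite, with $\mu:=\lambda_{1}(\textbf{B}^{tt})>0$ (in the applications of the lemma $\textbf{B}^{tt}$ is a principal submatrix of a positive definite matrix, namely of $P^{-1}\bbE W(2^j)(P^*)^{-1}$, hence positive definite). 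Since $\textbf{B}^{tt}_{\nu}\stackrel{P}\rightarrow\textbf{B}^{tt}$, on an event of probability tending to $1$ we have $\zeta_{\textnormal{tail}}^{*}\textbf{B}^{tt}_{\nu}\zeta_{\textnormal{tail}}\ge(\mu/2)\|\zeta_{\textnormal{tail}}\|^{2}$. On the other hand, since $u_{q}(\nu)$ is a unit eigenvector, $\zeta^{*}\textbf{B}_\nu\zeta=u_{q}(\nu)^{*}\textbf{W}_\nu u_{q}(\nu)=\lambda_{q}(\textbf{W}_\nu)$, and expanding in block form,
$$\zeta_{\textnormal{tail}}^{*}\textbf{B}^{tt}_{\nu}\zeta_{\textnormal{tail}}=\lambda_{q}(\textbf{W}_\nu)-\zeta_{\textnormal{head}}^{*}\textbf{B}^{hh}_{\nu}\zeta_{\textnormal{head}}-2\,\zeta_{\textnormal{head}}^{*}\textbf{B}^{ht}_{\nu}\zeta_{\textnormal{tail}}.$$
By the a priori bound on $\lambda_{q}(\textbf{W}_\nu)$, the deterministic bound on $\|\zeta_{\textnormal{head}}\|$, and $\|\textbf{B}^{hh}_{\nu}\|,\|\textbf{B}^{ht}_{\nu}\|=O_{P}(1)$, the right--hand side is at most $C_{1}a(\nu)^{2h_{q}}+C_{2}a(\nu)^{h_{q}}\|\zeta_{\textnormal{tail}}\|$ on an event of probability tending to $1$. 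Combining, with probability tending to $1$ one gets the quadratic inequality $(\mu/2)\|\zeta_{\textnormal{tail}}\|^{2}\le C_{1}a(\nu)^{2h_{q}}+C_{2}a(\nu)^{h_{q}}\|\zeta_{\textnormal{tail}}\|$, whence $\|\zeta_{\textnormal{tail}}(\nu)\|\le C' a(\nu)^{h_{q}}$ for a constant $C'$ depending only on $\mu,C_{1},C_{2}$. Recalling $\zeta_{i}(\nu)=a(\nu)^{h_{i}}\langle p_{\cdot,i},u_{q}(\nu)\rangle$, this is precisely $|\langle p_{\cdot,i},u_{q}(\nu)\rangle|\,a(\nu)^{h_{i}-h_{q}}=O_{P}(1)$ for $i=q+1,\hdots,n$, i.e.\ \eqref{e:|<p3,u2>|a^(h3-h2)=O_P(1)}.

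I expect the coercivity step to be the main obstacle: the scheme genuinely needs the tail principal block of the limit $\textbf{B}$ to be (uniformly) positive definite, not merely to have positive diagonal entries, since otherwise a component of $\zeta_{\textnormal{tail}}(\nu)$ along a near--null direction of $\textbf{B}^{tt}_{\nu}$ is invisible to the Rayleigh quotient and is not controlled by the argument above; consequently this positivity must be verified in the context in which the lemma is invoked. The only remaining work is the $O_{P}$ bookkeeping — intersecting the finitely many ``with probability tending to one'' events into a single one on which the quadratic inequality holds, and reading off its deterministic consequence for $\|\zeta_{\textnormal{tail}}\|$ — which is routine. (An alternative route, via the eigenvector equation $DP^{*}\textbf{W}_\nu u_{q}(\nu)=\lambda_{q}(\textbf{W}_\nu)D^{-1}P^{-1}u_{q}(\nu)$, isolating the tail rows and inverting $\textbf{B}^{tt}_{\nu}$, leads to the same conclusion and meets the same obstacle.)
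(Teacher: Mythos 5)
Your opening step is the same as the paper's: both arguments bound $\lambda_{q}(\textbf{W}_\nu)$ above by $O_{P}(a(\nu)^{2h_{q}})$ by testing the Courant--Fischer principle \eqref{e:Courant-Fischer} on the $q$-dimensional subspace $\{p_{\cdot,q+1},\hdots,p_{\cdot,n}\}^{\perp}$. After that the routes differ. The paper argues by contradiction: if some tail quantity $\langle p_{\cdot,i_1},u_{q}(\nu')\rangle^2 a^{2(h_{i_1}-h_{q})}$ exceeded $m$ with non-vanishing probability, the Rayleigh quotient would equal $\sum_{i>q}\widehat{b}_{ii}\langle p_{\cdot,i},u_{q}(\nu')\rangle^2 a^{2(h_i-h_{q})}(1+o_P(1))$ and hence blow up, contradicting \eqref{e:lambda2<C}; only the diagonal positivity \eqref{e:bii>0,i=q+1,...n} is invoked. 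You instead set $\zeta(\nu)=DP^{*}u_{q}(\nu)$, expand the Rayleigh identity in head/tail blocks, and close a quadratic inequality $(\mu/2)\|\zeta_{\textnormal{tail}}\|^{2}\le C_{1}a^{2h_{q}}+C_{2}a^{h_{q}}\|\zeta_{\textnormal{tail}}\|$ using coercivity of the tail block $\textbf{B}^{tt}$. Your bookkeeping (the a priori eigenvalue bound, the deterministic bound $\|\zeta_{\textnormal{head}}\|\le\sqrt{q}\,a^{h_q}$, the intersection of high-probability events, and the reading-off of \eqref{e:|<p3,u2>|a^(h3-h2)=O_P(1)}) is correct.

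The one substantive discrepancy is the hypothesis you use: you need $\lambda_{\min}(\textbf{B}^{tt})>0$, which is strictly stronger than the stated assumption \eqref{e:bii>0,i=q+1,...n}, so as written you prove a variant of the lemma rather than the lemma verbatim. You flag this yourself, and the flag is well taken, in two respects. First, the stronger hypothesis is indeed available at every invocation in the paper: there the tail block of the limit is a principal block of $B(2^j)=P^{-1}\bbE W(2^j)(P^{*})^{-1}$, which is positive definite. Second, some condition of this kind is genuinely needed beyond positive tail diagonal entries: with only \eqref{e:bii>0,i=q+1,...n} the off-diagonal tail cross terms $2\widehat{b}_{ii'}x_i x_{i'}$, $q<i<i'\le n$, can cancel the diagonal ones (take $n=3$, $q=1$, $P=I$, constant $\textbf{B}$ with $b_{11}=b_{22}=b_{33}=1$, $b_{12}=b_{13}=0$, $b_{23}=-c$, $c>1$; then $\lambda_{1}(\textbf{W}_\nu)\approx(1-c^{2})a^{2h_2}$ with eigenvector tending to $e_2$, so $|\langle p_{\cdot,2},u_{1}(\nu)\rangle|a^{h_2-h_1}\to\infty$), and the paper's own ``$(1+o_P(1))$'' domination step tacitly relies on the same kind of control of the tail quadratic form that your coercivity assumption makes explicit. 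So: a correct proof of the statement under the hypothesis actually used where the lemma is applied, reached by a different (block-decomposition/quadratic-inequality) mechanism than the paper's contradiction argument; to recover the lemma exactly as stated you would have to either add the tail positive-definiteness to its hypotheses, as you did, or verify it case by case at each application.
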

\begin{proof} Rewrite
$$
\bbR \ni \frac{\lambda_{q}(\textbf{W}_\nu)}{a^{2h_{q}}} = u^*_{q}(\nu)\frac{\textbf{W}_\nu}{a^{2h_{q}}}u_{q}(\nu)
$$
$$
= u^*_{q}(\nu)P \textnormal{diag}(a^{h_1-h_{q}},\hdots,1,\hdots,a^{h_n-h_{q}})\textbf{B}_\nu\textnormal{diag}(a^{h_1-h_{q}},\hdots,1,\hdots,a^{h_n-h_{q}}) P^*u_{q}(\nu)
$$
$$
= \sum^{n}_{i=1}\widehat{b}_{ii}\langle p_{\cdot,i},u_{i}(\nu)\rangle^2 a^{2(h_i-h_{q})}
+ 2 \sum_{i < i'}  \widehat{b}_{i i'}\langle p_{\cdot,i},u_{q}(\nu)\rangle a^{h_i-h_{q}}\langle p_{\cdot,i'},u_{q}(\nu)\rangle a^{h_{i'}-h_{q}}
$$
$$
= \inf_{{\mathcal U}_{q}} \sup_{u \in {\mathcal U}_{q} \cap S^{n-1}} u^* \frac{\textbf{W}_\nu}{a^{2h_{q}}}u \leq \sup_{u \in \{p_{\cdot,q + 1}, \hdots, p_{\cdot,n}\}^{\perp} \cap S^{n-1}} u^* \frac{\textbf{W}_\nu}{a^{2h_{q}}}u
$$
\begin{equation}\label{e:lambda2<C}
=: w^*_q(\nu) \frac{\textbf{W}_\nu}{a^{2h_{q}}}w_q(\nu) = O_P(1),
\end{equation}
where the last equality is a consequence of the fact that
$$
|\langle p_{\cdot,i},w_{q}(\nu)\rangle| \hspace{0.5mm}a^{h_i-h_{q}} =
\left\{\begin{array}{cc}
o_P(1), & i = 1,\hdots,q-1;\\
0, & i = q + 1,\hdots,n.
\end{array}\right.
$$
We claim that, as a consequence of \eqref{e:lambda2<C}, \eqref{e:|<p3,u2>|a^(h3-h2)=O_P(1)} holds for any $i = q + 1,\hdots,n$. By contradiction, suppose that for some $i_1 \in \{q + 1,\hdots,n\}$ there exists $\varepsilon_0 > 0 $ such that, for $m \in \bbN$ and a subsequence $\nu'= \nu'(m) \in \bbN'$,
$$
P\Big(\langle p_{\cdot,i_1},u_{q}(\nu')\rangle^2 a(\nu')^{2(h_{i_1}-h_{q})} > m\Big) \geq \varepsilon_0, \quad m \rightarrow \infty.
$$
Therefore,
$$
P\Big(\sum^{n}_{i = q+1}\widehat{b}_{ii}\langle p_{\cdot,i},u_{q}(\nu')\rangle^2 a(\nu')^{2(h_{i}-h_{q})} > m \widehat{b}_{i_1 i_1} \Big) \geq \varepsilon_0, \quad m \rightarrow \infty.
$$
By \eqref{e:bii>0,i=q+1,...n}, with non-vanishing probability, for every $m \in \bbN$ and $\nu' = \nu'(m)$, we can rewrite the left-hand side of \eqref{e:lambda2<C} as
$$
\frac{\lambda_{q}(\textbf{W}_{\nu'})}{a(\nu')^{2h_{q}}} = \sum^{n}_{i = q+1}\widehat{b}_{i i}\langle p_{\cdot,i},u_{q}(\nu')\rangle^2 a(\nu')^{2(h_{i}-h_{q})} (1 + o_P(1)) > m b_{i_1 i_1}  \hspace{1mm}(1 + o_P(1))> 0,
$$
since $\widehat{b}_{i i}  \stackrel{P}\rightarrow b_{i i} > 0$, $\nu' \rightarrow \infty$, $i = q+1,\hdots,n$. Therefore, $\frac{\lambda_{q}(\textbf{W}_{\nu'})}{a(\nu')^{2h_{q}}}$ is not bounded in probability from above, which contradicts \eqref{e:lambda2<C}. Thus, \eqref{e:|<p3,u2>|a^(h3-h2)=O_P(1)} holds for any $i = q + 1, \hdots,n$, as claimed. $\Box$\\
\end{proof}

We are now in a position to prove Theorem \ref{t:asympt_normality_lambda2}.\\

\noindent{\sc Proof of Theorem \ref{t:asympt_normality_lambda2}}: Fix $j$. For $q = 1,\hdots,n$, define the sequence of $\bbR$-valued functions $\{f_{\nu,q}(B )\}_{\nu \in \bbN}$, where
$$
{\mathcal H}_{\geq 0}(n,\bbR) \ni B \mapsto f_{\nu,q}(B)
$$
\begin{equation}\label{e:fv}
= \log \lambda_{q}\Big(\frac{P \textnormal{diag}(a^{h_1},\hdots, a^{h_n}) B  \textnormal{diag}(a^{h_1},\hdots, a^{h_n})P^* }{a^{2h_q}}\Big).
\end{equation}
By Lemma 4.1 in Abry and Didier \cite{abry:didier:2017},
\begin{equation}\label{e:sqrt(K)(B^-B)->N(0,sigma^2)}
\bbR^{n(n+1)/2} \ni \sqrt{K_{a,j}} \hspace{1mm}\textnormal{vec}_{{\mathcal S}}(\widehat{B}_{a}(2^j) - B(2^j)) \stackrel{d}\rightarrow {\mathcal N}(0,\Sigma_B(j)), \quad \nu \rightarrow \infty,
\end{equation}
where $\Sigma_B(j) \in {\mathcal  H}_{>0}(n(n+1)/2,\bbR)$. In particular,
\begin{equation}\label{e:Bhat->B}
\widehat{B}_{a}(2^j) \stackrel{P}\rightarrow B(2^j).
\end{equation}
Recall that, for any $M \in {\mathcal H}(n,\bbR)$, the differential of a simple eigenvalue $\lambda_{q}(M)$, $q = 1,\hdots,n$, exists in a vicinity of $M$ and is given by
\begin{equation}\label{e:dlambdal}
d\lambda_{q}(M) = {\mathbf u}_{q}(\nu)^* \hspace{0.5mm}\{d M \} \hspace{0.5mm}{\mathbf u}_{q}(\nu),
\end{equation}
where ${\mathbf u}_{q}(\nu)$ is a unit eigenvector of $M$ associated with $\lambda_{q}(M)$ (Magnus \cite{magnus:1985}, p.\ 182, Theorem 1). By Proposition \ref{p:convergence}, $(i)$, except for
\begin{equation}\label{e:EW_has_pairwise_distinct_eigens}
\lambda_q\Big( \frac{\bbE W_a(a(\nu)2^j)}{a^{2h_q}} \Big)= \lambda_q\Big(\frac{P \textnormal{diag}(a^{h_1},\hdots, a^{h_n}) B(2^j)  \textnormal{diag}(a^{h_1},\hdots, a^{h_n})P^*}{a^{2h_q}}\Big),
\end{equation}
all eigenvalues of the matrix $\bbE W_a(a(\nu)2^j)/a^{2h_q}$ either go to zero or blow up. Therefore, \eqref{e:EW_has_pairwise_distinct_eigens} is a simple eigenvalue for large enough $\nu$. Therefore, also for large $\nu$, by \eqref{e:dlambdal} the derivative of the function $f_{\nu,q}$ in \eqref{e:fv} exists in a vicinity ${\mathcal O}$ of $B(2^j)$ in ${\mathcal H}_{> 0}(n,\bbR)$. For any $B \in {\mathcal O}$, an application of Proposition \ref{p:mean_value_theorem} yields
\begin{equation}\label{e:Taylor_determ}
f_{\nu,q}(B) - f_{\nu,q}(B(2^j)) = \sum^{n}_{i_1 = 1}\sum^{n}_{i_2 = 1}\frac{\partial}{\partial b_{i_1,i_2}}f_{\nu,q}(\breve{B}) \hspace{1mm} \pi_{i_1,i_2}(B - B(2^j))
\end{equation}
for some matrix $\breve{B} \in {\mathcal  H}_{>0}(n,\bbR)$ lying in a segment connecting $B$ and $B(2^j)$ across ${\mathcal  H}_{>0}(n,\bbR)$ (see \eqref{e:path_in_matrix_space} and \eqref{e:H(1)-H(0)=H'(varsigma)}). Define the event
$$
A = \Big\{ \omega:  \breve{B}_{a}(2^j) \in {\mathcal O} \Big\}.
$$
By \eqref{e:Bhat->B},
\begin{equation}\label{e:W_has_pairwise_distinct_eigens}
P (A) \rightarrow 1, \quad \nu \rightarrow \infty.
\end{equation}
By \eqref{e:Taylor_determ}, for large enough $\nu$ and in the set $A$, the expansion
\begin{equation}\label{e:Taylor}
f_{\nu,q}(\widehat{B}_{a} (2^j)) - f_{\nu,q}(B(2^j)) = \sum^{n}_{i_1 = 1}\sum^{n}_{i_2 = 1}\frac{\partial}{\partial b_{i_1,i_2}}f_{\nu,q}(\breve{B}_{a} (2^j)) \hspace{1mm} \pi_{i_1,i_2}(\widehat{B}_{a}(2^j) - B(2^j))
\end{equation}
holds for some matrix $\breve{B}_{a}(2^j)$ lying in a segment connecting $\widehat{B}_{a}(2^j)$ and $B(2^j)$ across ${\mathcal  H}_{>0}(n,\bbR)$.
So, fix $q \in \{2,\hdots,n-1\}$ and let
\begin{equation}\label{e:W_a,2j}
\breve{W}(a2^j) := P \textnormal{diag}(a^{h_1},\hdots,a^{h_n})\breve{B}_a(2^j)\textnormal{diag}(a^{h_1},\hdots,a^{h_n})P^*,
\end{equation}
$$
\lambda_{q}(\breve{W}(a2^j)):= \inf_{{\mathcal U}_{q} }\sup_{u \in {\mathcal U}_{q} \cap S^{n-1}}u^* \breve{W}(a2^j)u.
$$
Consider the matrix
\begin{equation}\label{e:df/dbi1,i2}
\Big\{\frac{\partial}{\partial b_{i_1,i_2}}f_{\nu,q}(\breve{B}_{a}(2^j))\Big\}_{i_1,i_2 = 1,\hdots,n} = \Big\{\frac{a^{2 h_q}}{\lambda_{q}(\breve{W}(a2^j))}\hspace{1mm}\frac{\partial}{\partial b_{i_1,i_2}}\lambda_{q}\Big( \frac{\breve{W}(a2^j)}{a^{2h_q}}\Big)\Big\}_{i_1,i_2 = 1,\hdots,n}.
\end{equation}
where the differential of the eigenvalue $\lambda_{q}(\breve{W}(a2^j)/a^{2h_q})$ is given by expression \eqref{e:dlambdal} with $\breve{W}(a2^j)/a^{2h_q}$ in place of $M$ and $u_{q}(\nu)$ denoting a unit eigenvector of $\breve{W}(a2^j)/a^{2h_q}$ associated with its $q$-th eigenvalue. Then, each entry of the matrix \eqref{e:df/dbi1,i2} can be rewritten as
\begin{equation}\label{e:derivative_log_eigen}
\bbR \ni \frac{1}{a^{-2h_{q}}\lambda_{q}(\breve{W}(a2^j))}\Big(a^{-2h_{q}} u^{*}_{q}(\nu)\Big\{\frac{\partial}{\partial b_{i_1,i_2}}\breve{W}(a2^j) \Big\}u_{q}(\nu)\Big), \quad i_1,i_2 = 1,\hdots,n.
\end{equation}
To establish the limit in probability of \eqref{e:derivative_log_eigen}, note that, by an analogous argument for proving Proposition \ref{p:convergence}, all the claims in the latter proposition hold for the matrix $\breve{W}(a2^j)$ as in \eqref{e:W_a,2j} in place of $W_a(a2^j)$. So, write
\begin{equation}\label{e:d/dbBtilde_i1,i2=1i1,i2}
M(n,\bbR) \ni \frac{\partial}{\partial b_{i_1,i_2}}\breve{B}_{a} (2^j) = {\boldsymbol 1}_{i_1,i_2},  \quad  i_1,i_2 = 1,\hdots,n,
\end{equation}
where ${\boldsymbol 1}_{i_1,i_2}$ is a matrix with 1 on entry $(i_1,i_2)$ and zeroes elsewhere. Therefore, for $1 \leq i_1 \leq i_2 \leq n$, we can pick the sequence $u_{q}(\nu)$ as to obtain, from \eqref{e:W_a,2j} and \eqref{e:df/dbi1,i2},
$$
a^{-2h_{q}} u^*_q(\nu)\Big\{\frac{\partial}{\partial b_{i_1,i_2}}\breve{W}(a2^j)\Big\}u_q(\nu)
$$
$$
= u^*_q(\nu)P \textnormal{diag}(a^{h_1 - h_{q}},\hdots,1,\hdots,a^{h_n - h_{q}}){\boldsymbol 1}_{i_1,i_2}
\textnormal{diag}(a^{h_1 - h_{q}},\hdots,1,\hdots,a^{h_n - h_{q}})P^*u_q(\nu)
$$
\begin{equation}\label{e:limit_derivative_eigenvalue}
=\langle p_{\cdot,i_1},u_{q}(\nu)\rangle a^{h_{i_1} - h_{q}} \langle p_{\cdot,i_2},u_{q}(\nu)\rangle a^{h_{i_2} - h_{q}}
\stackrel{P}\rightarrow \left\{\begin{array}{cc}
0, & i_1 < q;\\
\langle p_{\cdot,q},u_{q}\rangle^2 & i_1 = q = i_2;\\
\langle p_{\cdot,q},u_{q}\rangle x_{i_2,*}, & i_1 = q < i_2;\\
x_{i_1,*}x_{i_2,*}, &  q < i_1,\\
\end{array}\right.
\end{equation}
for entries $x_{i,*}$ (depending on $q$), $i = q+1,\hdots,n$, of the vector $\textbf{x}_{q,*}(2^j)$ as given by expression \eqref{e:inner*scaling=o(1)} in Proposition \ref{p:convergence}. Then, by \eqref{e:limit_derivative_eigenvalue} and \eqref{e:lambdai/a^(2hi)_conv} in Proposition \ref{p:convergence}, expression \eqref{e:derivative_log_eigen} converges in probability to the matrix
\begin{equation}\label{e:limit_derivative}
\frac{1}{\xi_q(2^j)}\times
\left(\begin{array}{cccccc}
0 & \hdots & 0 & 0 & \hdots & 0 \\
\vdots & \ddots & \vdots & \vdots & \ddots & \vdots \\
0 & \hdots & \langle p_{\cdot,q}, u_{q} \rangle^2 & \langle p_{\cdot,q}, u_{q} \rangle x_{q+1,*} & \hdots & \langle p_{\cdot,q} , u_{q}\rangle x_{n,*}\\
0 & \hdots & \langle p_{\cdot,q}, u_{q} \rangle x_{q+1,*} &  x^2_{q+1,*} & \hdots & x_{q+1,*}x_{n,*}\\
\vdots & \ddots & \vdots & \vdots & \ddots & \vdots \\
0 & \hdots & \langle p_{\cdot,q} , u_{q}\rangle x_{n,*} &  x_{q+1,*}x_{n,*} & \hdots & x^2_{n,*}\\
\end{array}\right),
\end{equation}
where $\xi_q(2^j) > 0$. Turning back to \eqref{e:Taylor}, expression \eqref{e:limit_derivative} and Theorem \ref{t:asymptotic_normality_wavecoef_fixed_scales} imply that
$$
\bbR \ni \sum^{n}_{i_1 = 1}\sum^{n}_{i_2 = 1}\frac{\partial}{\partial b_{i_1,i_2}}f_{\nu,q}(\breve{B}(2^j)) \sqrt{K_{a,j}} \hspace{1mm}\pi_{i_1,i_2}(\widehat{B}_{a}(2^j) - B(2^j))  \stackrel{d}\rightarrow  {\mathcal N}(0,\sigma^2_{q}(j)),
$$
as $\nu \rightarrow \infty$, where $\sigma^2_{q}(j) > 0$ as a consequence of the fact that $\Sigma_B(j)$ in \eqref{e:sqrt(K)(B^-B)->N(0,sigma^2)} has full rank.

The behavior of the remaining terms
$$
\sqrt{K_{a,j}}( \log \lambda_{q}(W_a(a(\nu)2^{j}))  - \log \lambda_{q}(\bbE W_a(a(\nu)2^{j})) ) , \quad q = 1,n,
$$
can be established by a similar argument starting from \eqref{e:Taylor} and applying Proposition \ref{p:convergence}. Since, for $j = j_1,\hdots,j_2$ and $q = 1,\hdots,n$, the asymptotic normality of each individual log-eigenvalue results from the factor \eqref{e:sqrt(K)(B^-B)->N(0,sigma^2)}, then the limiting distribution is a $n$-variate normal, as claimed. This shows \eqref{e:asympt_normality_lambda2}. $\Box$\\

\subsection{Asymptotic theory for the wavelet eigenvalue regression estimator}

The following lemma is used in the proof of Corollary \ref{c:h^q_asymptotically_normal}.
\begin{lemma}\label{l:|lambdaq(EW)-xiq(2^j)|_bound}
Fix $j \in \{j_1,\hdots,j_2\}$. Then, for some $C > 0$ that does not depend on $j$,
\begin{equation}\label{e:|lambdaq(EW)-xiq(2^j)|_bound}
\Big|\frac{\lambda_q(\bbE W_a(a(\nu)2^j))}{a(\nu)^{2h_q}} - \xi_q(2^j)\Big| \leq \frac{C}{a(\nu)^{ \min_{1 \leq q_1 < q_2 \leq n} (h_{q_2} - h_{q_1})}}, \quad q = 1,\hdots,n,
\end{equation}
for large enough $\nu \in \bbN$.
\end{lemma}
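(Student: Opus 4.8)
The plan is to use the entrywise scaling relation to remove the $j$-dependence, and then to re-run the proof of part $(i)$ of Proposition~\ref{p:convergence} for a deterministic matrix while recording the rate of every convergence step. Throughout, write $\delta := \min_{1\le q_1<q_2\le n}(h_{q_2}-h_{q_1})>0$ (recall \eqref{e:H_h1<...<hn}).

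First I would reduce to a single scale. By \eqref{e:EW(a(nu)2j)_in_the_case_blindsourcing} and the entrywise scaling \eqref{e:B(2^j)_entrywise_scaling}, $\bbE W_a(a(\nu)2^j) = P\,\textnormal{diag}((a(\nu)2^j)^{h_i})\,B(1)\,\textnormal{diag}((a(\nu)2^j)^{h_i})\,P^*$, so that, setting $\Phi(b) := \lambda_q\big(P\,\textnormal{diag}(b^{h_i-h_q})\,B(1)\,\textnormal{diag}(b^{h_i-h_q})\,P^*\big)$,
$$
\frac{\lambda_q(\bbE W_a(a(\nu)2^j))}{a(\nu)^{2h_q}} = 2^{\,j2h_q}\,\Phi(a(\nu)2^j), \qquad \xi_q(2^j) = 2^{\,j2h_q}\xi_q(1) = 2^{\,j2h_q}\lim_{b\to\infty}\Phi(b),
$$
the last identities by \eqref{e:xi-i0_scales} and Proposition~\ref{p:convergence}$(i)$ specialized to $j=0$. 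Since $\{j_1,\hdots,j_2\}$ is a fixed finite set, it then suffices to prove $|\Phi(b)-\lim_{b'\to\infty}\Phi(b')|\le C_0 b^{-\delta}$ for large $b$, with $C_0$ depending only on $P,B(1)$ and $h_1,\hdots,h_n$; the constant in the lemma is then $\max_q C_0\max_{j_1\le j\le j_2}2^{\,j(2h_q-\delta)}$. For $q=n$ this is the one-line Weyl estimate behind \eqref{e:u3(nu)->u3}, and for $q=1$ the block $\mathbf S$ below is absent, so I would describe the generic case $2\le q\le n-1$.

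Next I would re-derive $\Phi(b)\to\Phi(\infty)$ with a rate. Put $M_b := P\,\textnormal{diag}(b^{h_i-h_q})B(1)\,\textnormal{diag}(b^{h_i-h_q})P^*$ and split $M_b = P\mathbf S_{b,q-1}P^* + P\mathbf T_{b,n-q+1}P^*$ as in \eqref{e:W/a^(2hi0)}. Every entry of $\mathbf S_{b,q-1}$ carries a factor $b^{h_i-h_q}$ with $i\le q-1$, so $\|P\mathbf S_{b,q-1}P^*\|_{\mathrm{op}}=O(b^{h_{q-1}-h_q})=O(b^{-\delta})$, and Weyl's inequality gives $|\lambda_q(M_b)-\lambda_q(P\mathbf T_{b,n-q+1}P^*)|=O(b^{-\delta})$. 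Then the deterministic version of Lemma~\ref{l:<p3,u2>a(nu)^(h_3-h_2)=OP(1)} applied to $P\mathbf T_{b,n-q+1}P^*$ and to $P\mathbf U_{b,n-q+1}P^*$ (see \eqref{e:Uhat}) provides a fixed $\eta$ bounding $|\langle p_{\cdot,i},\cdot\rangle|\,b^{h_i-h_q}$, $i>q$, at their $q$-th eigenvectors, for all large $b$; since $\mathbf T_{b,n-q+1}$ and $\mathbf U_{b,n-q+1}$ differ only in the entries $(i,i'),(i',i)$ with $i<q<i'$, each contributing at most $O(b^{h_{q-1}-h_q})=O(b^{-\delta})$ to the quadratic form under that restriction, the constrained Courant--Fischer argument \eqref{e:lambda2(PTP*)_bound} gives, now quantitatively, $|\lambda_q(P\mathbf T_{b,n-q+1}P^*)-\lambda_q(P\mathbf U_{b,n-q+1}P^*)|=O(b^{-\delta})$. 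Finally, $P\mathbf U_{b,n-q+1}P^*\succeq 0$ has kernel $\{p_{\cdot,q},\hdots,p_{\cdot,n}\}^\perp$, hence $\lambda_q(P\mathbf U_{b,n-q+1}P^*)=\min\{u^*P\mathbf U_{b,n-q+1}P^*u:\ u\in\textnormal{span}\{p_{\cdot,q},\hdots,p_{\cdot,n}\}\cap S^{n-1}\}$; a short Gram-matrix computation shows the unit-norm constraint forces $\langle p_{\cdot,q},u\rangle^2=\langle p_{\cdot,q},u_q\rangle^2+O(b^{-(h_{q+1}-h_q)})$ uniformly over $u$ obeying the $\eta$-bound, so $u^*P\mathbf U_{b,n-q+1}P^*u=g_{q,j}(x_{q+1},\hdots,x_n)+O(b^{-\delta})$ with $x_i:=\langle p_{\cdot,i},u\rangle b^{h_i-h_q}$ and $g_{q,j}$ as in \eqref{e:g-i0}; testing with the vector $w(b)$ of the proof of Proposition~\ref{p:convergence} (upper bound) and evaluating at the $\eta$-constrained minimizer (lower bound) yields $|\lambda_q(P\mathbf U_{b,n-q+1}P^*)-g_{q,j}(\mathbf x_{q,*})|=O(b^{-\delta})$. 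Combining the three bounds and using $\xi_q(2^j)=g_{q,j}(\mathbf x_{q,*})$ from \eqref{e:ksi_i0=g_i0(x*)} gives $|\Phi(b)-\Phi(\infty)|\le C_0 b^{-\delta}$, which by the reduction completes the proof.

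The hard part will be making Proposition~\ref{p:convergence} \emph{quantitative} — turning every ``$\stackrel{P}\to$''/``$o_P(1)$'' there into an explicit $O(b^{-\delta})$ with constants uniform in $q$. The crux is the angle estimate: the $S^{n-1}$-constraint, together with the quantitative Lemma~\ref{l:<p3,u2>a(nu)^(h_3-h_2)=OP(1)} bound (which places the relevant eigenvector within $O(b^{-\delta})$ of $\{p_{\cdot,q+1},\hdots,p_{\cdot,n}\}^\perp$, a subspace meeting $\textnormal{span}\{p_{\cdot,q},\hdots,p_{\cdot,n}\}$ only in the line $\bbR u_q$), pins $\langle p_{\cdot,q},u_q(\nu)\rangle$ to $O(b^{-\delta})$; once this is in place, propagating the errors through the bounded, finite-dimensional quadratic optimization is routine.
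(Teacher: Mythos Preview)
Your proposal is correct and follows essentially the same route as the paper: both arguments re-run the deterministic version of the proof of Proposition~\ref{p:convergence}$(i)$ --- the $\mathbf S/\mathbf T/\mathbf U$ splitting, Weyl's inequality, the constrained Courant--Fischer step \eqref{e:lambda2(PTP*)_bound}, and a test-vector upper bound --- while tracking every $o(1)$ as an explicit $O(a^{-\delta})$. Your initial reduction via the entrywise scaling \eqref{e:B(2^j)_entrywise_scaling} to a single function $\Phi(b)$ of $b=a(\nu)2^j$ is a clean way to make the $j$-independence of $C$ automatic (the paper instead works at fixed $j$ with $B(2^j)$ and remarks that one can take the maximum over the finite octave set), and your Gram-matrix angle estimate is the abstract version of the paper's coordinate computation in the illustrative case $n=3$, $q=2$.
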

\begin{proof}
Since the argument is similar to that for proving Proposition \ref{p:convergence}, $(i)$, we only write it out in dimension $n = 3$ and for $q = 2$. In the following bounds, the generic constant $C > 0$ does not depend on $j$ since we can always take the maximum over $j = j_1,\hdots,j_2$.

For notational simplicity, write $\Big( b_{ii'} \Big)_{i,i'= 1,2,3} = \Big(b_{ii'}(2^j)\Big)_{i,i'= 1,2,3}$ as in \eqref{e:Bhat,B}. Let $\{u_q(\nu)\}_{\nu \in \bbN} \subseteq S^2$ be the sequence of eigenvectors of $\bbE W_a(a2^j)$ associated with the eigenvalue $\lambda_q(\bbE W_a(a2^j))$. By Proposition \ref{p:convergence}, we can assume that, for $q = 1,2,3$, relation \eqref{e:u1,u2,u3_conv} holds and that
\begin{equation}\label{e:<p_q,u_q>_>_0}
\langle p_{\cdot,q},u_q\rangle > 0.
\end{equation}
We can further assume, without loss of generality, that
$$
p_{\cdot,3} = e_3, \quad u_1 = e_1, \quad u_2 = e_2, \quad p_{\cdot,2} \perp e_1.
$$
From expression \eqref{e:inner*scaling=o(1)},
\begin{equation}\label{e:|<e2,u1(nu)>|a^(h2-h1)_|<e3,u1(nu)>|a^(h3-h1)}
|\langle p_{\cdot,2},u_1(\nu)\rangle |a^{h_2 - h_1} = O(1), \quad |\langle e_3,u_1(\nu)\rangle| a^{h_3 - h_1} = O(1)
\end{equation}
and
\begin{equation}\label{e:|<e3,u2(nu)>|/a^(h3-h2)=O(1)}
|\langle e_3,u_2(\nu)\rangle|a^{h_3 - h_2} = O(1).
\end{equation}
Consider the decomposition
\begin{equation}\label{e:p2=sum_betai_ei}
p_{\cdot,2} = \sum^{3}_{i=1}\beta_i e_i,  \quad \textnormal{for some triple $\beta_i$, $i=1,2,3$}.
\end{equation}
Then,
$$
\langle p_{\cdot,2},u_1(\nu) \rangle = \beta_2 \langle e_2,u_1(\nu)\rangle + \beta_3 \langle e_3,u_1(\nu)\rangle
$$
and \eqref{e:|<e2,u1(nu)>|a^(h2-h1)_|<e3,u1(nu)>|a^(h3-h1)} imply that
\begin{equation}\label{e:|<e2,u1(nu)>|_bound}
|\langle e_2,u_1(\nu)\rangle | \leq \frac{C}{a^{h_2 - h_1}}.
\end{equation}
Hence, by \eqref{e:|<e2,u1(nu)>|a^(h2-h1)_|<e3,u1(nu)>|a^(h3-h1)}, \eqref{e:|<e3,u2(nu)>|/a^(h3-h2)=O(1)}, \eqref{e:|<e2,u1(nu)>|_bound} and the orthogonality relation $\langle u_1(\nu),u_2(\nu)\rangle = 0$,
$$
|\langle e_1,u_2(\nu)\rangle| = \frac{1}{|\langle e_1,u_1(\nu)\rangle|} \Big|\langle e_2,u_1(\nu)\rangle \langle e_2,u_2(\nu)\rangle
+\langle e_3,u_1(\nu)\rangle \langle e_3,u_2(\nu)\rangle\Big|
$$
\begin{equation}\label{e:|<e1,u2(nu)>|=<a^(h2-h1)}
= \frac{1}{|1 - o(1)|} \Big|O\Big(\frac{1}{a^{h_2-h_1}}\Big) (1 - o(1))
+O\Big(\frac{1}{a^{h_3 - h_1}}\Big)O\Big(\frac{1}{a^{h_3 - h_2}}\Big)\Big| \leq \frac{C}{a^{h_2 - h_1}}.
\end{equation}
In view of \eqref{e:|<e3,u2(nu)>|/a^(h3-h2)=O(1)} and \eqref{e:|<e1,u2(nu)>|=<a^(h2-h1)}, the unit norm relation $\|u_2(\nu)\|^{2} = 1$ implies that
\begin{equation}\label{e:1-<e2,u2(nu)>^2=O(1/a^(2(h2-h1)))}
1 - \langle e_{2},u_2(\nu)\rangle^2 = O\Big(\frac{1}{a^{2\min\{h_2 - h_1,h_3 - h_2\}}}\Big).
\end{equation}
Consider the function
\begin{equation}\label{e:varrho_nu,2,j(x)}
\varrho_{\nu,2,j}(x) = b_{22}\langle p_{\cdot,2},u_2(\nu)\rangle^2 + b_{33}x^2 + 2 b_{23} \langle p_{\cdot,2},u_2(\nu)\rangle x + r_{\nu,2,j}(x)
\end{equation}
and its limiting counterpart
$$
g_{2,j}(x) = b_{22}\langle p_{\cdot,2},u_2 \rangle^2
+ b_{33}x^2 + 2 b_{23} \langle p_{\cdot,2},u_2\rangle x
$$
(see \eqref{e:g-i0}), where the residual function in \eqref{e:varrho_nu,2,j(x)} is given by
$$
r_{\nu,2,j}(x) = b_{11}\langle p_{\cdot,1},u_2(\nu) \rangle^2 a^{2(h_1-h_2)} + 2 b_{12}\langle p_{\cdot,1},u_2(\nu) \rangle a^{h_1-h_2} \langle p_{\cdot,2},u_2(\nu) \rangle
$$
$$
+ 2 b_{13}\langle p_{\cdot,1},u_2(\nu) \rangle a^{h_1-h_2} x.
$$
For $\nu \in \bbN$, let
\begin{equation}\label{e:y*,3=argmin}
x_{*}(\nu) = \textnormal{argmin}_{x \in \bbR} \hspace{1mm} \varrho_{\nu,2,j}(x), \quad x_{*} = \textnormal{argmin}_{x \in \bbR} g_{2,j}(x)
\end{equation}
be the unique global minima of $\varrho_{\nu,2,j}(\cdot)$ and $g_{2,j}(\cdot)$, respectively, which can be expressed as
\begin{equation}\label{e:y*,3(nu)_system}
x_{*}(\nu) = - \frac{1}{b_{33}} \Big(b_{23}\langle p_{\cdot,2},u_2(\nu)\rangle +  b_{13}\langle p_{\cdot,1},u_2(\nu) \rangle a^{h_1-h_2}\Big), \quad x_{*} = - \frac{b_{23}}{b_{33}} \langle p_{\cdot,2},u_2\rangle.
\end{equation}
Note that the sequence $\{x_{*}(\nu)\}_{\nu \in \bbN}$ in \eqref{e:y*,3(nu)_system} converges to the solution $x_*$ of the limiting system. Let
\begin{equation}\label{e:w(nu)_in_span(p2,e3)}
\{{\mathbf w}(\nu)\}_{\nu \in \bbN} \subseteq \textnormal{span}\{p_{\cdot,2},p_{\cdot,3}\} \cap S^2 = \textnormal{span}\{p_{\cdot,2},e_{3}\} \cap S^2
\end{equation}
be a sequence such that
\begin{equation}\label{e:<e3,w(nu)>=C/a^(h3-h2)}
\langle p_{\cdot,3}, {\mathbf w}(\nu)\rangle = \langle e_{3}, {\mathbf w}(\nu)\rangle = \frac{x_{*}(\nu)}{a^{h_3 - h_2}} \in (-1,1),
\end{equation}
which is possible for large enough $\nu$. From the unit norm relation
$$
1 = \|{\mathbf w}(\nu)\|^2 = \langle e_2, {\mathbf w}(\nu)\rangle^2 + \langle e_{3}, {\mathbf w}(\nu)\rangle^2,
$$
we obtain
\begin{equation}\label{e:1-<e2,w(nu)>^2=C/a^(2(h3-h2))}
1 - \langle e_2, {\mathbf w}(\nu)\rangle^2 = \frac{x^2_{*}(\nu)}{a^{2(h_3 - h_2)}}.
\end{equation}
By \eqref{e:1-<e2,u2(nu)>^2=O(1/a^(2(h2-h1)))}, \eqref{e:1-<e2,w(nu)>^2=C/a^(2(h3-h2))} and the mean value theorem applied to the function $f(x) = \sqrt{x}$ under condition \eqref{e:<p_q,u_q>_>_0},
$$
\Big| \langle e_2, {\mathbf w}(\nu)\rangle - \langle e_2, u_2(\nu)\rangle \Big| = \Big|\Big(\langle e_2, {\mathbf w}(\nu)\rangle -1\Big) - \Big(\langle e_2, u_2(\nu)\rangle-1\Big)\Big|
$$
\begin{equation}\label{e:|<e2,w(nu)>-<e2,u2(nu)>|_bound}
= \Big|f'(\theta_1(\nu)) \Big(\langle e_2, {\mathbf w}(\nu)\rangle^2 -1\Big) - f'(\theta_2(\nu))\Big(\langle e_2, u_2(\nu)\rangle^2 -1\Big)\Big|
 \leq \frac{C}{a^{2\min\{h_2 - h_1,h_3 - h_2\}}}
\end{equation}
for bounded sequences $\{\theta_1(\nu)\}_{\nu \in \bbN}$ and $\{\theta_2(\nu)\}_{\nu \in \bbN}$. By \eqref{e:|<e3,u2(nu)>|/a^(h3-h2)=O(1)}, \eqref{e:p2=sum_betai_ei}, \eqref{e:|<e1,u2(nu)>|=<a^(h2-h1)}, \eqref{e:<e3,w(nu)>=C/a^(h3-h2)} and \eqref{e:|<e2,w(nu)>-<e2,u2(nu)>|_bound},
$$
\Big|\langle p_{\cdot,2},{\mathbf w}(\nu)\rangle - \langle p_{\cdot,2},u_2(\nu)\rangle \Big|
$$
$$
= \Big| \beta_2 \langle e_2,{\mathbf w}(\nu) - u_2(\nu)\rangle + \sum_{i=1,3} e_i \langle p_{\cdot,i},{\mathbf w}(\nu)\rangle
- \sum_{i=1,3} e_i \langle p_{\cdot,i},u_2(\nu)\rangle\Big|
$$
\begin{equation}\label{e:|<p2,w(nu)>-<p2,u2(nu)>|_bound}
\leq \frac{C}{a^{2 \min\{h_2 - h_1,h_3 - h_2\}}} + \frac{C'}{a^{h_3 - h_2}}+ \Big( \frac{C''}{a^{h_2 - h_1}} + \frac{C'''}{a^{h_3 - h_2}}\Big)
\leq  \frac{C}{a^{\min\{h_2 - h_1,h_3 - h_2\}}}.
\end{equation}
Therefore, by the mean value theorem applied to the function $f(x) = x^2$,
$$
\Big|\langle p_{\cdot,2},{\mathbf w}(\nu)\rangle^2 - \langle p_{\cdot,2},u_2(\nu)\rangle^2 \Big|
$$
\begin{equation}\label{e:|<p2,w(nu)>2-<p2,u2(nu)>2|}
= \Big|f'(\theta_3(\nu)) \Big(\langle p_{\cdot,2},{\mathbf w}(\nu)\rangle - \langle p_{\cdot,2},u_2(\nu) \rangle \Big) \Big|\leq \frac{C}{a^{\min\{h_2 - h_1,h_3 - h_2\}}}
\end{equation}
for some bounded sequence $\{\theta_3(\nu)\}_{\nu \in \bbN}$. In addition, by \eqref{e:1-<e2,u2(nu)>^2=O(1/a^(2(h2-h1)))} and a similar reasoning,
\begin{equation}\label{e:|<p2,u2(nu)>-<p2,u2>|_bound}
\Big|\langle p_{\cdot,2},u_2(\nu)\rangle -  \langle p_{\cdot,2},u_2\rangle\Big| \leq \frac{C}{a^{\min_{1 \leq q_1 < q_2 \leq 3}(h_{q_2} - h_{q_1})}},
\end{equation}
whence
\begin{equation}\label{e:|x*(nu)-x*|_bound}
|x_*(\nu) -  x_*| \leq \frac{C}{a^{\min_{1 \leq q_1 < q_2 \leq 3}(h_{q_2} - h_{q_1})}}.
\end{equation}

%
On the other hand, define the matrices
$$
{\mathbf S}_{\nu,1}
= \left(\begin{array}{ccc}
b_{11}a^{2(h_1 - h_2)} & b_{12}a^{h_1 - h_2} & 0 \\
b_{12}a^{h_1 - h_2} & 0 & 0 \\
0 & 0 & 0
\end{array}\right), \quad
{\mathbf T}_{\nu,2}
= \left(\begin{array}{ccc}
0 & 0 & b_{13}a^{h_1 - h_2} a^{h_3 - h_2} \\
0 & b_{22} & b_{23}a^{h_3 - h_2}\\
a^{h_3 - h_2}  & b_{23}a^{h_3 - h_2} & b_{33} a^{2(h_3 - h_2)}
\end{array}\right),
$$
$$
{\mathbf U}_{\nu,2}
= \left(\begin{array}{ccc}
0 & 0 & 0 \\
0 & b_{22} & b_{23}a^{h_3 - h_2}\\
0 & b_{23}a^{h_3 - h_2} & b_{33} a^{2(h_3 - h_2)}
\end{array}\right)
$$
and note that
$$
\lambda_2(P( {\mathbf S}_{\nu,1}+ {\mathbf T}_{\nu,2})P^*) = \frac{\lambda_2(\bbE W_a(a2^j))}{a^{2h_2}}
$$
(c.f.\ expressions \eqref{e:W/a^(2hi0)} and \eqref{e:Uhat}). By adapting the argument leading to \eqref{e:infu*PUP*u}, by Weyl's inequality (see \eqref{e:Weyl}), and by a simple adaptation of the proof of \eqref{e:lambda2(PTP*)_bound},
$$
\varrho_{\nu,2,j}(x_*(\nu)) \leq \lambda_2(P( {\mathbf S}_{\nu,1}+ {\mathbf T}_{\nu,2})P^*)
$$
$$
\leq \lambda_3(P {\mathbf S}_{\nu,1}P^*)+ \lambda_2(P{\mathbf T}_{\nu,2}P^*) \leq \frac{C}{a^{h_2 - h_1}} + \frac{C'}{a^{h_3 - h_2}} +  \lambda_2(P{\mathbf U}_{\nu,2}P^*)
$$
$$
\leq \frac{C}{a^{\min_{1 \leq q_1 < q_2 \leq 3}(h_{q_2} - h_{q_1})}}  + b_{22}\langle p_{\cdot,2},{\mathbf w}(\nu)\rangle^2 + b_{33}x^2_{*}(\nu) + 2 b_{23}\langle p_{\cdot,2},{\mathbf w}(\nu)\rangle x_*(\nu)
$$
$$
= \frac{C}{a^{\min_{1 \leq q_1 < q_2 \leq 3}(h_{q_2} - h_{q_1})}} +  b_{22}\langle p_{\cdot,2},u_2(\nu)\rangle^2 + b_{33}x^2_{*}(\nu) + 2 b_{23}\langle p_{\cdot,2},u_2(\nu)\rangle x_*(\nu)
$$
$$
+ b_{22}\Big(
\langle p_{\cdot,2},{\mathbf w}(\nu)\rangle^2  - \langle p_{\cdot,2},u_2(\nu)\rangle^2 \Big)
+ 2 b_{23}\Big( \langle p_{\cdot,2},{\mathbf w}(\nu)\rangle  - \langle p_{\cdot,2},u_2(\nu)\rangle \Big)x_*(\nu)
$$
$$
\leq \frac{C}{a^{\min_{1 \leq q_1 < q_2 \leq 3}(h_{q_2} - h_{q_1})}} +  \varrho_{\nu,2,j}(x_*(\nu)),
$$
where the last inequality is a consequence of the bounds \eqref{e:|<p2,w(nu)>-<p2,u2(nu)>|_bound} and \eqref{e:|<p2,w(nu)>2-<p2,u2(nu)>2|}. Moreover, by \eqref{e:|<p2,u2(nu)>-<p2,u2>|_bound}
and \eqref{e:|x*(nu)-x*|_bound},
$$
\Big| \varrho_{\nu,2,j}(x_*(\nu)) - \xi_{2}(2^j) \Big| = \Big|\varrho_{\nu,2,j}(x_*(\nu)) - g_{2,j}(x_*)\Big|
$$
$$
= \Big| \Big\{b_{22}\langle p_{\cdot,2},u_2(\nu)\rangle^2 + b_{33}x^2_*(\nu) + 2 b_{23}\langle p_{\cdot,2},u_2(\nu)\rangle x_*(\nu) \Big\}
$$
$$
- \Big\{b_{22}\langle p_{\cdot,2},u_2\rangle^2 + b_{33}x^2_* + 2 b_{23}\langle p_{\cdot,2},u_2\rangle x_*\Big\}\Big| \leq \frac{C}{a^{\min_{1 \leq q_1 < q_2 \leq 3}(h_{q_2} - h_{q_1})}}.
$$
Consequently,
$$
\Big| \frac{\lambda_2(\bbE W_a(a2^j))}{a^{2h_2}} - \xi_{2}(2^j) \Big| \leq \Big| \frac{\lambda_2(\bbE W_a(a2^j))}{a^{2h_2}} - \varrho_{\nu,2,j}(x_*(\nu)) \Big|+ \Big| \varrho_{\nu,2,j}(x_*(\nu)) - \xi_{2}(2^j) \Big|
$$
$$
\leq \frac{C}{a^{\min_{1 \leq q_1 < q_2 \leq 3}(h_{q_2} - h_{q_1})}}.
$$
Hence, \eqref{e:|lambdaq(EW)-xiq(2^j)|_bound} holds for $q = 2$. $\Box$\\
\end{proof}

\noindent {\sc Proof of Corollary \ref{c:h^q_asymptotically_normal}}: We begin by showing $(i)$. In the argument for proving Theorem \ref{t:consistency}, replace $a^{H}$ with $(a2^j)^{H}$. We arrive at the double bound
\begin{equation}\label{e:lambdaq_double_bound_in_prob}
C_1 (a2^j)^{2\Re h_{q'}} \leq \lambda_q(W_a(a2^j)) \leq C_2 (a2^j)^{2\Re h_{q'}} (1 + o_P(\log^{2(n-1)}a), \quad q' = 1,\hdots,n',
\end{equation}
for constants $C_1$, $C_2 > 0$ that do not depending on $j$, where \eqref{e:lambdaq_double_bound_in_prob} holds with probability arbitrarily close to 1. Therefore,
$$
\log_2 C_1 + 2\Re h_{q'}(\log_2 a +  j)  \leq \log_2 \lambda_q(W_a(a2^j)) \leq \log_2 C_2 + 2\Re h_{q'}(\log_2 a ( 1 + o_{P}(1)) +  j) .
$$
Claim \eqref{e:h^q_consistent} is now a consequence of \eqref{e:sum_wj=0,sum_jwj=1} and Theorem \ref{t:consistency}.

Next, we show $(ii)$. For a fixed $q = 1,\hdots,n$, the left-hand side of \eqref{e:h^q_asymptotically_normal} can be recast as
$$
\sqrt{\frac{\nu}{a}} \sum^{j_2}_{j = j_1} \frac{w_j}{2} \Big(\log_2 \lambda_q(W_a(a2^j) - \log_2 \lambda_q(\bbE W_a(a 2^j))\Big)
$$
\begin{equation}\label{e:h^q-hq_three_terms}
+ \sqrt{\frac{\nu}{a}} \sum^{j_2}_{j=j_1} \frac{w_j}{2} \Big(\log_2 \lambda_q(\bbE W_a(a 2^j)) - \log_2 \xi_q(a(\nu)2^j)\Big)
+ \sqrt{\frac{\nu}{a}} \Big( \sum^{j_2}_{j = j_1} \frac{w_j}{2}  \log_2 \xi_q(a(\nu)2^j)  - h_q \Big).
\end{equation}
Note that by \eqref{e:xi-i0_scales} in Proposition \ref{p:convergence}, the function $\xi_q(\cdot)$ satisfies the scaling relation $\xi_q(a(\nu)2^j) = (a(\nu)2^{j})^{2h_q} \xi_q(1)$. Therefore, by property \eqref{e:sum_wj=0,sum_jwj=1}, the third term in the sum \eqref{e:h^q-hq_three_terms} is zero. In turn, by the mean value theorem and \eqref{e:|lambdaq(EW)-xiq(2^j)|_bound} in Lemma \ref{l:|lambdaq(EW)-xiq(2^j)|_bound}, the second term in the sum \eqref{e:h^q-hq_three_terms} is bounded by
$$
\sqrt{\frac{\nu}{a}} \sum^{j_2}_{j=j_1} \frac{|w_j |}{2} \frac{C}{a^{ \min_{1 \leq q_1 < q_2 \leq n} (h_{q_2} - h_{q_1})}}
\leq C'\sqrt{\frac{\nu}{a^{ 1 + 2 \min_{1 \leq q_1 < q_2 \leq n} (h_{q_2} - h_{q_1})}}} \rightarrow 0, \quad \nu \rightarrow \infty,
$$
where the limit is a consequence of condition \eqref{e:a(nu)/J->infty}. Therefore, we can rewrite \eqref{e:h^q_asymptotically_normal} as
$$
\sum^{j_2}_{j = j_1} \frac{2^{j/2 - 1}w_j}{ \log 2} \sqrt{K_{a,j}} \Big(\log \lambda_q(W_a(a2^j)) - \log \lambda_q(\bbE W_a(a2^j))\Big) + o(1),
$$
and the weak limit \eqref{e:h^q_asymptotically_normal} follows from Theorem \ref{t:asympt_normality_lambda2}. In the limiting variance in \eqref{e:h^q_asymptotically_normal}, the weight matrix $M \in M(n,mn,\bbR)$ is given by
\begin{equation}\label{e:weight_matrix_M}
M = \Big( \frac{2^{j_1/2}w_{j_1}}{ \log 2} I_n ; \hspace{1mm}\frac{2^{j_1+1/2}w_{j_1+1}}{ \log 2} I_n; \hspace{1mm} \hdots \hspace{1mm}; \hspace{1mm} \frac{2^{j_2/2}w_{j_2}}{ \log 2} I_n\Big),
\end{equation}
where $I_n \in M(n,\bbR)$ is an identity matrix and $m$ is as in \eqref{e:j1<...<j2_m=j2-j1+1}. $\Box$\\

\noindent {\sc Proof of Proposition \ref{p:h1=...=hn}} Fix $j \in \bbN$. For an OFBM under assumptions \eqref{e:h1=...=hn} and \eqref{e:time_revers}, the wavelet variance at octave $j$ is given by
$$
\bbE W_a(2^j) = C \int_{\bbR} |x|^{-2h+1} \frac{|\widehat{\psi}(2^j x)|^2}{x^2}dx \hspace{1mm}AA^*
$$
for some constant $C > 0$ (see Abry and Didier \cite{abry:didier:2017}, expression (3.2)). Hence, by condition \eqref{e:AA*_has pairwise distinct eigenvalues}, all eigenvalues of $\bbE W(2^j)$ are simple. For $q = 1,\hdots,n$, let
$$
f_q (B) = \log \lambda_q (P B P^*), \quad B \in {\mathcal H}_{\geq 0}(n,\bbR).
$$
Then, we can rewrite
$$
\log \lambda_{q}(W_a(a2^j)) - \log \lambda_{q}(\bbE W_a(a2^j)) = \log \lambda_{q}(W_a(2^j)) - \log \lambda_{q}(\bbE W_a(2^j))
$$
$$
= f_q (\widehat{B}_a(2^j)) - f_q( B(2^j)),
$$
where the derivative \eqref{e:dlambdal} is well-defined in some vicinity ${\mathcal O}$ of $B(2^j)$ in ${\mathcal H}_{> 0}(n,\bbR)$. Therefore, the weak limit \eqref{e:asympt_normality_lambda2} is a consequence of the Delta method (Taylor expansion). In addition, since
$$
\lambda_{q}(\bbE W_a(a2^j)) = (a2^j)^{2h}\lambda_{q}(\bbE W_a(1)),
$$
under \eqref{e:h1=...=hn}, the weak limit \eqref{e:h^q_asymptotically_normal} also holds. $\Box$\\

\subsection{Asymptotic theory for discrete time measurements}


Define the complex-valued random matrix
\begin{equation}\label{e:Btilde-nu}
\widetilde{B}_{\nu}(2^j)= P^{-1}\frac{1}{K_{a,j}}\sum^{K_{a,j}}_{k=1} \widetilde{D}_\nu (2^j, k)\widetilde{D}_\nu (2^j, k)^*  (P^*)^{-1},\quad P \in GL(n,\bbC).
\end{equation}
The following lemma can be proved by following the same steps of the proof of Lemma C.2 in Abry and Didier \cite{abry:didier:2017:supplementary}. In its proof, we make use of the condition that
$\frac{\nu}{a(\nu)^{1+ 2 \hspace{0.5mm}\Re h_1}}\rightarrow 0$, $\nu \rightarrow \infty$ (see \eqref{e:a(nu)/J->infty}).

\begin{lemma}\label{l:vecBtilde(2^j)_asympt}
Under the assumptions (OFBM1--2, 3$'$, 4), let $\widetilde{B}_{\nu}(2^j)$, $\widehat{B}_a(2^j)$ be as in \eqref{e:Btilde-nu} and \eqref{e:Bhat_a(2^j)_B(2^j)}. Then,
\begin{equation}\label{e:sqrt(Kaj)(Btilde-Bhat)}
\norm{ \sqrt{K_{a,j}}(\widetilde{B}_{\nu}(2^j) - \widehat{B}_a(2^j)) }_{L^1(P)} \rightarrow 0, \quad \nu \rightarrow \infty.
\end{equation}
\end{lemma}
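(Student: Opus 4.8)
The plan is to follow the structure of the proof of Lemma~C.2 in Abry and Didier \cite{abry:didier:2017:supplementary}, upgrading it from the bivariate, closed-form setting to arbitrary dimension $n$ under (OFBM3$'$). Since $P \in GL(n,\bbR)$ is fixed, $\norm{P^{-1}M(P^*)^{-1}} \leq \norm{P^{-1}}^2\norm{M}$, so it suffices to bound $\sqrt{K_{a,j}}\,\bbE\norm{\widetilde{W}(a(\nu)2^j) - W_a(a(\nu)2^j)}$. Introduce the discretization error $E_k := \widetilde{D}(a(\nu)2^j,k) - D(a(\nu)2^j,k) \in \bbR^n$ of the $k$-th wavelet coefficient, so that
$$
\widetilde{W}(a(\nu)2^j) - W_a(a(\nu)2^j) = \frac{1}{K_{a,j}}\sum^{K_{a,j}}_{k=1}\Big( E_k D(a(\nu)2^j,k)^* + D(a(\nu)2^j,k)E_k^* + E_k E_k^* \Big),
$$
and by the triangle and Cauchy--Schwarz inequalities,
$$
\bbE\norm{\widetilde{W}(a(\nu)2^j) - W_a(a(\nu)2^j)} \leq 2\big(\bbE\norm{E_1}^2\big)^{1/2}\big(\bbE\norm{D(a(\nu)2^j,1)}^2\big)^{1/2} + \bbE\norm{E_1}^2,
$$
where I have used that, at the fixed octave $j$, the detail and approximation coefficients output by Mallat's algorithm are stationary in the shift $k$ for $N_{\psi}\geq 2$ (c.f.\ Introduction), so these second moments do not depend on $k$. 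Passing from $W_a(\cdot)$ to $\widehat{B}_a$, i.e.\ conjugating by $a(\nu)^{-J_H}P^{-1}$, amounts to rescaling the $i$-th eigen-coordinate by $a(\nu)^{-h_i}$, which I would track directionwise.

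The crux is a quantitative bound on $\bbE\norm{E_1}^2$ relative to $\bbE\norm{D(a(\nu)2^j,1)}^2 \asymp (a(\nu)2^j)^{2h_n}$ (the latter from the operator self-similarity \eqref{e:EW(a(nu)2j)_in_the_case_blindsourcing}). Here I would invoke the moving-average representation of OFBM (Didier and Pipiras \cite{didier:pipiras:2011}, Theorem~3.2), whose validity is exactly what condition \eqref{e:eigen-assumption_stronger} ($\Re h_q \in (0,1)\setminus\{1/2\}$) secures, together with the MRA representation of the discretized coefficients furnished by Mallat's pyramidal algorithm under $(W1)$, $(W2')$, $(W3)$ (Stoev et al.\ \cite{stoev:pipiras:taqqu:2002}, Proposition~2.4 and Theorem~3.2; Mallat \cite{mallat:1999}, Chapter~7). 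Feeding the moving-average kernel into both the ideal wavelet filter and the filter actually realized by the algorithm initialized at $\widetilde{a}_{0,k} = a_{\varphi}B_H(k)$, the error $E_k$ is expressed through the difference of the two transfer functions; the decay \eqref{e:psihat_is_slower_than_a_power_function} of $\widehat{\psi}$ and the $N_{\psi}$ vanishing moments then yield, componentwise in the eigen-basis, a relative bound of the form
$$
\bbE\norm{E_1}^2 \leq C\, a(\nu)^{-2\Re h_1}\, \bbE\norm{D(a(\nu)2^j,1)}^2,
$$
uniformly over the finite range $j = j_1,\hdots,j_2$. Combining this with the display above, with the directionwise $a(\nu)^{-h_i}$ rescalings, and with $\sqrt{K_{a,j}} = \sqrt{\nu/(a(\nu)2^j)}$, one obtains a bound of order $\sqrt{\nu/(2^j a(\nu)^{1 + 2\Re h_1})}$, which tends to $0$ by the hypothesis $\nu/a(\nu)^{1 + 2\Re h_1}\to 0$ recorded in \eqref{e:a(nu)/J->infty} and noted immediately before the statement. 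This establishes \eqref{e:sqrt(Kaj)(Btilde-Bhat)}.

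The main obstacle is the relative discretization-error estimate above: controlling $\bbE\norm{E_1}^2$ by the power $a(\nu)^{-2\Re h_1}$ of the scale times the signal's magnitude, which is where the MRA/moving-average machinery and the exclusion of $1/2$ from the Hurst spectrum (needed for the convenient representation in Didier and Pipiras \cite{didier:pipiras:2011}) are essential. The proof of Lemma~C.2 in Abry and Didier \cite{abry:didier:2017:supplementary} carries this out in dimension $2$ via explicit formulas; in general dimension, OFBM under (OFBM3$'$) is $P$ applied to an $n$-vector of (dependent) FBMs with distinct exponents $h_1 < \hdots < h_n$, so the discretization analysis decouples directionwise and the bound extends with essentially bookkeeping changes — but verifying this carefully, with the $a(\nu)^{-h_i}$ rescalings and the mixing by $P$ handled coordinate by coordinate, is the substantive part of the work.
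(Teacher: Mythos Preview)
Your proposal is correct and follows essentially the same route as the paper, which simply states that the lemma ``can be proved by following the same steps of the proof of Lemma~C.2 in Abry and Didier \cite{abry:didier:2017:supplementary}'' and records that the condition $\nu/a(\nu)^{1+2\Re h_1}\to 0$ from \eqref{e:a(nu)/J->infty} is the one used. You have correctly identified all the ingredients: the decomposition via the discretization error $E_k$, the Cauchy--Schwarz step, the moving-average representation of OFBM (whence the restriction \eqref{e:eigen-assumption_stronger}), the MRA filter comparison, and the final appeal to \eqref{e:a(nu)/J->infty}.

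One small expositional slip: your opening reduction ``it suffices to bound $\sqrt{K_{a,j}}\,\bbE\norm{\widetilde{W}(a(\nu)2^j) - W_a(a(\nu)2^j)}$'' via the inequality $\norm{P^{-1}M(P^*)^{-1}} \leq \norm{P^{-1}}^2\norm{M}$ is not quite right, since $\widehat{B}_a(2^j) = P^{-1}W_a(2^j)(P^*)^{-1}$ involves $W_a(2^j)$, not $W_a(a(\nu)2^j)$; the passage requires the additional $a(\nu)^{-J_H}$ conjugation you yourself invoke two paragraphs later. You clearly understand this, since you explicitly track the directionwise $a(\nu)^{-h_i}$ rescalings, but the two statements should be merged for consistency.
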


Next, we show Theorem \ref{t:asympt_log_a(nu)_discrete}.\\

\noindent {\sc Proof of Theorem \ref{t:asympt_log_a(nu)_discrete}}: In regard to $(a)$, note that, by Lemma \ref{l:vecBtilde(2^j)_asympt},
\begin{equation}\label{e:Btilde->B}
\widetilde{B}_{\nu}(2^j) \stackrel{P}\rightarrow B(2^j).
\end{equation}
Therefore, \eqref{e:log_lambdaE/2_log_a(n)_discrete} and \eqref{e:log_lambdaE/2_log_a(n)_distinct Re_discrete} can be shown by the same argument for establishing Theorem \ref{t:consistency}.

To show $(c)$, rewrite the left-hand side of \eqref{e:asympt_normality_lambda2_discrete}  as
\begin{equation}\label{e:sqrt(K)(log_lambdaq(Wtilde)-log_lambdaq(W))}
\sqrt{K_{a,j}} \Big( \log \lambda_{q}(\widetilde{W}(a2^j)) - \log \lambda_{q}(W_a(a2^j)) \Big) +
\sqrt{K_{a,j}} \Big( \log \lambda_{q}(W_a(a2^j)) - \log \lambda_{q}(\bbE W_a(a2^j)) \Big).
\end{equation}
Define the event
$$
\widetilde{A} = \Big\{ \omega:  \widetilde{B}_{\nu}(2^j), \widehat{B}_{a}(2^j) \in {\mathcal O} \Big\}.
$$
In view of \eqref{e:Btilde->B}, $P(\widetilde{A}) \rightarrow 1$ as $\nu \rightarrow \infty$. Therefore, by replacing $\bbE W_a(a(\nu)2^j)$ with $W_a(a(\nu)2^j)$ and $W_a(a(\nu)2^j)$ with $\widetilde{W}(a(\nu)2^j)$, we can use the same argument leading to \eqref{e:Taylor} to arrive at
\begin{equation}\label{e:Taylor_discrete}
f_{\nu,q}(B) - f_{\nu,q}(\widehat{B}_a(2^j)) = \sum^{n}_{i_1 = 1}\sum^{n}_{i_2 = 1}\frac{\partial}{\partial b_{i_1,i_2}}f_{\nu,q}(\widetilde{B}) \hspace{1mm} \pi_{i_1,i_2}(B - \widehat{B}_a(2^j)).
\end{equation}
The expansion \eqref{e:Taylor_discrete} holds in the set $\widetilde{A}$ for any $B \in {\mathcal O}$ and for some matrix $\widetilde{B}$ lying in a segment connecting $B$ and $\widehat{B}_a(2^j)$ across ${\mathcal  H}_{>0}(n,\bbR)$. By \eqref{e:sqrt(Kaj)(Btilde-Bhat)},
\begin{equation}\label{e:sqrt(K)pi(Btilde-Bhat)->0_L1}
\sqrt{K_{a,j}} \hspace{1mm}  \pi_{i_1,i_2}(\widetilde{B}_{\nu}(2^j) - \widehat{B}_a(2^j)) \stackrel{L^1(P)}\longrightarrow 0, \quad i_1,i_2 = 1,\hdots,n.
\end{equation}
Moreover, by following the argument of the proof of Proposition \ref{p:convergence},
\begin{equation}\label{e:eigen/law_converges_discrete}
\frac{\lambda_{q}(\widetilde{W}(a2^j))}{a^{2h_{q}}}\stackrel{P}\rightarrow \xi_{q}(2^j),
\end{equation}
and for a sequence of eigenvectors $\{u_{q}(\nu)\}_{\nu \in \bbN}$ of $\widetilde{W}(a2^j)$,
\begin{equation}\label{e:angle*law_converges_discrete}
\Big(\langle p_{\cdot,q+1},u_{q}(\nu) \rangle a^{h_{q+1} - h_{q}}, \hdots, \langle p_{\cdot,n},u_{q}(\nu) \rangle a^{h_n - h_{q}} \Big) \stackrel{P}\rightarrow {\mathbf x}_{q,*}, \quad \nu \rightarrow \infty,
\end{equation}
where ${\mathbf x}_{q,*}$ is given by \eqref{e:x*_sol_determ}. By \eqref{e:Taylor_discrete}, \eqref{e:sqrt(K)pi(Btilde-Bhat)->0_L1}, \eqref{e:eigen/law_converges_discrete} and \eqref{e:angle*law_converges_discrete},
$$
\sqrt{K_{a,j}} \Big( \log \lambda_{q}(\widetilde{W}(a2^j)) - \log \lambda_{q}(W_a(a2^j)) \Big) = \sqrt{K_{a,j}} \hspace{1mm} \Big(f_{\nu,q}(\widetilde{B}_{\nu}(2^j)) - f_{\nu,q}(\widehat{B}_a(2^j)) \Big) \stackrel{L^1(P)}\rightarrow 0
$$
as $\nu \rightarrow \infty$. Hence, by \eqref{e:sqrt(K)(log_lambdaq(Wtilde)-log_lambdaq(W))} and Theorem \ref{t:asympt_normality_lambda2}, \eqref{e:asympt_normality_lambda2_discrete} holds. Moreover, by a similar argument, statement $(e)$ also holds.

As in the proof of Corollary \ref{c:h^q_asymptotically_normal}, statement $(b)$ is a consequence of the proof of statement $(a)$, and statement $(d)$ is a consequence of statement $(c)$. Moreover, in light of the proof of Proposition \ref{p:h1=...=hn}, under conditions \eqref{e:h1=...=hn} and \eqref{e:AA*_has pairwise distinct eigenvalues} the argument for showing $(c)$ still holds. Hence, so does statement $(f)$. $\Box$\\

\section{Auxiliary results}

\subsection{Theorem \ref{t:consistency}}

The following two basic lemmas are used the proof of Theorem \ref{t:consistency} and are stated without proof.
\begin{lemma}\label{l:bound_eigen_W}
Let $M \in {\mathcal H}_{\geq 0}(n,\bbC)$ and suppose its eigenvalues are ordered $0 \leq \lambda_1(M) \leq \hdots \leq \lambda_n(M)$. Then,
$$
\lambda_1(M) \hspace{0.5mm}v^* v \leq v^* M v \leq \lambda_n(M) \hspace{0.5mm}v^* v, \quad v \in \bbC^n.
$$
\end{lemma}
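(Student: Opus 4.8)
The plan is to read off the bounds directly from the spectral decomposition of $M$. Since $M \in {\mathcal H}_{\geq 0}(n,\bbC)$, by the spectral theorem there exist a unitary matrix $U \in M(n,\bbC)$ and the real diagonal matrix $\Lambda = \textnormal{diag}(\lambda_1(M),\hdots,\lambda_n(M))$, with $0 \leq \lambda_1(M) \leq \hdots \leq \lambda_n(M)$, such that $M = U \Lambda U^*$. For a fixed $v \in \bbC^n$, I would set $w = U^* v = (w_1,\hdots,w_n)^*$; since $U$ is unitary, $w^* w = v^* U U^* v = v^* v$.

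Then $v^* M v = v^* U \Lambda U^* v = w^* \Lambda w = \sum^{n}_{i=1}\lambda_i(M)|w_i|^2$. Using the ordering $\lambda_1(M) \leq \lambda_i(M) \leq \lambda_n(M)$ for every $i = 1,\hdots,n$, a termwise comparison gives $\lambda_1(M)\sum^{n}_{i=1}|w_i|^2 \leq \sum^{n}_{i=1}\lambda_i(M)|w_i|^2 \leq \lambda_n(M)\sum^{n}_{i=1}|w_i|^2$. Substituting $\sum^{n}_{i=1}|w_i|^2 = w^* w = v^* v$ yields $\lambda_1(M)\, v^* v \leq v^* M v \leq \lambda_n(M)\, v^* v$, and since $v$ was arbitrary the claim follows.

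There is no genuine obstacle here; this is the classical Rayleigh--Ritz estimate, and the only point requiring (minimal) care is to invoke the spectral theorem in its Hermitian form, so that the change of variables $w = U^* v$ is an isometry of $\bbC^n$. An equivalent route would be to reduce to the two endpoint cases via the Courant--Fischer characterization \eqref{e:Courant-Fischer}, namely $\lambda_n(M) = \sup_{u \in S^{n-1}_{\bbC}} u^* M u$ and $\lambda_1(M) = \inf_{u \in S^{n-1}_{\bbC}} u^* M u$, and then homogenize by applying these to $u = v/\|v\|$ for $v \neq 0$, the case $v = 0$ being trivial.
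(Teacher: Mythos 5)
Your proof is correct: the spectral decomposition $M = U\Lambda U^*$, the isometric change of variables $w = U^*v$, and the termwise comparison give exactly the claimed Rayleigh--Ritz bounds, and the alternative reduction to the endpoint cases of the Courant--Fischer characterization \eqref{e:Courant-Fischer} is also valid. Note that the paper states this lemma without proof (it is listed among the ``basic lemmas \dots stated without proof''), so there is no argument in the paper to compare against; your write-up simply supplies the standard justification the authors took for granted.
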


\begin{lemma}\label{l:S_1=<S_2}
Let $S_1, S_2 \in {\mathcal H}_{\geq 0}(n,\bbC)$. If $u^*S_1 u \leq u^*S_2 u$, $u \in S^{n-1}_{\bbC}$, then
$$
\lambda_{q}(S_1) \leq \lambda_{q}(S_2), \quad q = 1,\hdots,n.
$$
\end{lemma}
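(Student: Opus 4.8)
The plan is to deduce the claim directly from the Courant--Fischer variational characterization \eqref{e:Courant-Fischer}, which is available because $S_1, S_2 \in {\mathcal H}_{\geq 0}(n,\bbC)$ are, in particular, Hermitian. First I would fix $q \in \{1,\hdots,n\}$ and an arbitrary $q$-dimensional subspace ${\mathcal U}_q \subseteq \bbC^n$. Restricting the pointwise hypothesis $u^* S_1 u \leq u^* S_2 u$ to vectors $u \in {\mathcal U}_q \cap S^{n-1}_{\bbC}$ and taking suprema over this (common) set preserves the inequality, giving
$$
\sup_{u \in {\mathcal U}_q \cap S^{n-1}_{\bbC}} u^* S_1 u \;\leq\; \sup_{u \in {\mathcal U}_q \cap S^{n-1}_{\bbC}} u^* S_2 u .
$$
Since the left- and right-hand sides are dominated termwise for \emph{every} $q$-dimensional subspace, taking the infimum over all such ${\mathcal U}_q$ --- an order-preserving operation --- yields
$$
\lambda_q(S_1) = \inf_{{\mathcal U}_q}\sup_{u \in {\mathcal U}_q \cap S^{n-1}_{\bbC}} u^* S_1 u \;\leq\; \inf_{{\mathcal U}_q}\sup_{u \in {\mathcal U}_q \cap S^{n-1}_{\bbC}} u^* S_2 u = \lambda_q(S_2),
$$
which is exactly the asserted inequality, valid for each $q = 1,\hdots,n$.

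There is essentially no serious obstacle here: the only subtlety is verifying that \eqref{e:Courant-Fischer} holds for Hermitian (not merely real symmetric) matrices and over the complex sphere $S^{n-1}_{\bbC}$, which is classical (see Horn and Johnson \cite{horn:johnson:2012}, Chapter 4), together with the elementary observation that both $\sup$ over a fixed set and $\inf$ over the family of $q$-dimensional subspaces respect $\leq$. An equivalent route would be to note that the hypothesis says precisely that $S_2 - S_1 \in {\mathcal H}_{\geq 0}(n,\bbC)$ (the Loewner order) and then invoke Weyl's monotonicity inequality, $\lambda_q(S_1) \leq \lambda_q\big(S_1 + (S_2 - S_1)\big) = \lambda_q(S_2)$; I would nonetheless prefer the Courant--Fischer argument above, since \eqref{e:Courant-Fischer} is already recorded in the paper and the derivation is entirely self-contained.
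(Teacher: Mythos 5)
Your proof is correct: the monotonicity of the quadratic form on each $q$-dimensional subspace, combined with taking suprema and then infima in the Courant--Fischer characterization \eqref{e:Courant-Fischer}, is exactly the standard argument, and your alternative via Weyl monotonicity is also valid. The paper in fact states this lemma without proof as a basic fact, relying implicitly on the very Courant--Fischer principle it records, so your argument supplies precisely the intended (omitted) justification.
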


\subsection{Matrix calculus}

In this section, we retrieve some results from Magnus and Neudecker \cite{magnus:neudecker:2007} to produce a mean value theorem for scalar-valued functions with matrix arguments.

Let $\vecoper$ be the operator that piles up the columns of a matrix, namely,
$$
\vecoper(A) = \left(\begin{array}{c}
a_{\cdot,1}\\
\vdots\\
a_{\cdot,n}
\end{array}\right), \quad A \in M(m,n,\bbR).
$$
Define the function
$$
F: {\mathbf S} \rightarrow \bbR^{m \times p}, \quad {\mathbf S} \subseteq \bbR^{n \times q},
$$
differentiable at a point $S \in \textnormal{int}\hspace{0.5mm}{\mathbf S}$. We define the \textit{Jacobian matrix} of $F$ at the matrix $S$ by
$$
DF(S)  = D \hspace{0.5mm}\vecoper F(S).
$$
This is the $mp \times nq$ matrix whose $(i_1,i_2)$-th element is the partial derivative of the $i_1$-th component of $\vecoper F(X)$ with respect to the $i_2$-th element of $\vecoper X$, evaluated at the point $X = S$.

Let ${\mathcal T} \subseteq \bbR^{m \times p}$ be a set such that $F({\mathbf S}) \subseteq {\mathcal T}$, and let $G: {\mathcal T}\rightarrow \bbR^{r \times s}$ be a differentiable function at a point $T \in F(S) \in \textnormal{int}\hspace{0.5mm}{\mathcal T}$. Further define the composite function
$$
H : {\mathbf S} \rightarrow \bbR^{r \times s}, \quad {\mathbf S} \ni X \mapsto H(X) = G [F(X)].
$$
Then, by the chain rule (Magnus and Neudecker \cite{magnus:neudecker:2007}, p.\ 108, Theorem 12), $H$ is differentiable at $S$ and its Jacobian at the point $S$ is given by
\begin{equation}\label{e:chain_rule}
DH(S) = DG[F(S)]\hspace{0.5mm} D F(S).
\end{equation}
In particular, when $p = m$ and $n = q = 1 = r = s$, $DF(S)$ and $DG[F(S)]$ are, respectively, $m^2 \times 1$ and $1 \times m^2$ matrices and we can rewrite \eqref{e:chain_rule} as
\begin{equation}\label{e:chain_rule_p=m,n=q=1=r=s}
DH(S) = D \hspace{0.5mm} \vecoper G[F(x)] \hspace{1mm} D \hspace{0.5mm} \vecoper F(x)\Big|_{x = s},
\end{equation}
where $S =: s \in \bbR$. For the sake of illustration, in the case where $m = 2$, we can write $\Big( F(s)_{ii'}\Big)_{i,i'=1,2}$ and
$$
D \hspace{0.5mm} \vecoper F(s) = \Big( F'(s)_{11}, F'(s)_{21}, F'(s)_{12}, F'(s)_{22} \Big)^{*},
$$
$$
D \hspace{0.5mm} \vecoper G(T) = \Big( \frac{\partial}{\partial t_{11}}G(T), \frac{\partial}{\partial t_{21}}G(T), \frac{\partial}{\partial t_{12}}G(T), \frac{\partial}{\partial t_{22}}G(T)\Big).
$$
Hence,
$$
DH(S) = \sum^{2}_{i_1=1}\sum^{2}_{i_2=1}\frac{\partial}{\partial t_{i_1,i_2}}G[F(s)]F'(s)_{i_1,i_2}.
$$
The following mean value relation is a straightforward consequence of the chain rule \eqref{e:chain_rule_p=m,n=q=1=r=s}.
\begin{proposition}\label{p:mean_value_theorem}
Let $G: {\mathcal T} \rightarrow \bbR $ be a differentiable function, where ${\mathcal T} \subseteq \bbR^{m \times m}$ is a connected, open set in the matrix norm topology. Let $T_0, T_1 \in {\mathcal T}$. Then, there is a matrix $\Theta = \{\theta_{i_1,i_2}\}_{i_1,i_2 = 1,\hdots,m}$ in the segment $\{T \in {\mathcal T}: s T_0 + (1-s) T_1, s \in [0,1]\} \subseteq {\mathcal T}$ such that
\begin{equation}\label{e:mean_value_theorem}
G(T_1) - G(T_0) = \sum^{m}_{i_1=1}\sum^{m}_{i_2=1}\frac{\partial}{\partial t_{i_1,i_2}}G[\Theta]\hspace{1mm}d_{i_1,i_2},
\end{equation}
where $\Delta := T_1 - T_0 = \{\Delta_{i_1,i_2}\}_{i_1,i_2=1,\hdots,m}$.
\end{proposition}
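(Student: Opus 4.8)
The plan is to reduce this matrix-argument mean value statement to the classical scalar mean value theorem by restricting $G$ to the line segment joining $T_0$ and $T_1$. Set $\Delta := T_1 - T_0$ and define the affine curve $F : [0,1] \to {\mathcal T}$ by $F(s) := T_0 + s\Delta = sT_1 + (1-s)T_0$; by hypothesis the whole segment, and hence the image of $F$, lies in the open set ${\mathcal T}$. Since $F$ is a polynomial map into $\bbR^{m\times m}$ (so everywhere differentiable) and $G$ is differentiable on ${\mathcal T}$, the composite $H := G \circ F : [0,1] \to \bbR$ is continuous on $[0,1]$ and differentiable on $(0,1)$.

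Next I would invoke the ordinary mean value theorem for $H$: there exists $\varsigma \in (0,1)$ with $H(1) - H(0) = H'(\varsigma)$, that is, $G(T_1) - G(T_0) = H'(\varsigma)$. It then remains to identify $H'(\varsigma)$, and for this I would apply the chain rule for matrix-argument functions in the special case $p = m$, $n = q = r = s = 1$ recorded in \eqref{e:chain_rule_p=m,n=q=1=r=s}. Here $D\,\vecoper F(s) = \vecoper(\Delta)$, the constant $m^2 \times 1$ column whose entries are the $\Delta_{i_1,i_2}$, while $D\,\vecoper G[F(s)]$ is the $1 \times m^2$ row vector with entries $\frac{\partial}{\partial t_{i_1,i_2}} G[F(s)]$. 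Taking the product gives
$$
H'(s) = \sum_{i_1 = 1}^{m}\sum_{i_2 = 1}^{m} \frac{\partial}{\partial t_{i_1,i_2}} G[F(s)] \; \Delta_{i_1,i_2}.
$$
Setting $\Theta := F(\varsigma) = \varsigma T_1 + (1-\varsigma)T_0$, which lies in the segment $\{ sT_0 + (1-s)T_1 : s \in [0,1]\} \subseteq {\mathcal T}$, yields exactly \eqref{e:mean_value_theorem} with $d_{i_1,i_2} = \Delta_{i_1,i_2}$.

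There is essentially no obstacle here: the statement is a bookkeeping exercise once the chain rule \eqref{e:chain_rule_p=m,n=q=1=r=s} is in hand. The only points deserving a sentence of care are (i) verifying that the hypotheses of the scalar mean value theorem hold \emph{along the entire segment}, which follows from ${\mathcal T}$ being open and containing the segment, so that $H$ is differentiable on $(0,1)$; and (ii) matching the flat $m^2$-vector $\vecoper(\Delta)$ with the doubly-indexed family $\{\Delta_{i_1,i_2}\}_{i_1,i_2=1,\ldots,m}$ so that the double sum in \eqref{e:mean_value_theorem} is assembled with the correct terms. One could equivalently phrase the computation of $H'$ without the $\vecoper$ formalism, as $H'(s) = \langle \nabla G(F(s)), \Delta\rangle$ with the Frobenius inner product, but routing it through \eqref{e:chain_rule_p=m,n=q=1=r=s} keeps the argument within the machinery already set up in this subsection.
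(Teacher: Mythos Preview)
Your proposal is correct and follows essentially the same approach as the paper: define $F(s)=T_0+s\Delta$, set $H=G\circ F$, apply the scalar mean value theorem to $H$, and compute $H'(\varsigma)$ via the chain rule \eqref{e:chain_rule_p=m,n=q=1=r=s}. The only differences are cosmetic (you state $\varsigma\in(0,1)$ rather than $[0,1]$ and add a remark on matching the $\vecoper$ indexing), and your version is slightly more explicit about why $H$ is differentiable along the segment.
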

\begin{proof}
Define the path
\begin{equation}\label{e:path_in_matrix_space}
M(m,\bbR) \ni F(s) = T_0 + s \Delta, \quad s \in [0,1].
\end{equation}
Also define the real-valued, composite function $H(s) = G[F(s)]$. Then, by the mean value theorem and \eqref{e:chain_rule_p=m,n=q=1=r=s}, there is $\varsigma \in [0,1]$ such that
\begin{equation}\label{e:H(1)-H(0)=H'(varsigma)}
H(1) - H(0) = H'(\varsigma) = \sum^{m}_{i_1=1}\sum^{m}_{i_2=1}\frac{\partial}{\partial t_{i_1,i_2}}G[F(\varsigma)]\hspace{1mm}\Delta_{i_1,i_2}.
\end{equation}
This shows \eqref{e:mean_value_theorem}. $\Box$\\
\end{proof}

\subsection{On Jordan canonical forms}

For $h \in \bbC$, a Jordan block of size $n_{h}$ is given by
\begin{equation} \label{e:Jordan_block}
J_{h} = \left( \begin{array}{ccccc}
h & 0 & 0 & \ldots & 0 \\
1   & h & 0 & \ldots & 0 \\
0   &    1   & h & \ldots & 0 \\
\vdots & \vdots & \vdots & \ddots & \vdots\\
0   &    0   & \ldots & 1 & h \\
\end{array} \right).
\end{equation}
Then, for $z > 0$,
\begin{equation} \label{e:z^Jlambda}
z^{J_{h}} = \left( \begin{array}{ccccc}
z^{h}      &    0    &    0    &  \ldots & 0\\
(\log z)z^{h} & z^{h} &    0     & \ldots & 0\\
\frac{(\log z)^{2}}{2!} z^{h} & (\log z)z^{h}   &    z^{h} & \ddots & 0 \\
\vdots   &  \vdots  &  \ddots   & \ddots  & 0 \\
\frac{(\log z)^{n_{h}-1}}{(n_{h}-1)!} z^{h} &
\frac{(\log z)^{n_{h}-2}}{(n_{h}-2)!} z^{h} &
\ldots &  (\log z)z^{h} & z^{h}\\
\end{array} \right).
\end{equation}

\bibliography{mlrd}

\small

\bigskip

\noindent \begin{tabular}{lr}
Patrice Abry & \hspace{6cm} Gustavo Didier\\
Physics Lab & Mathematics Department\\
CNRS and \'{E}cole Normale Sup\'{e}rieure de Lyon & Tulane University  \\
46 all\'{e}e d'Italie & 6823 St.\ Charles Avenue\\
F-69364, Lyon cedex 7, France & New Orleans, LA 70118, USA\\
{\it patrice.abry@ens-lyon.fr} & {\it gdidier@tulane.edu}\\
\end{tabular}\\

\smallskip

\end{document}